\newtheorem{theorem}{Theorem}[section]
\newtheorem{lemma}[theorem]{Lemma}
\newtheorem{proposition}[theorem]{Proposition}
\newtheorem{corollary}[theorem]{Corollary}
\newtheorem{assumption}[theorem]{Assumption}
\theoremstyle{definition}
\newtheorem{definition}[theorem]{Definition}
\newtheorem{remark}[theorem]{Remark}
\numberwithin{equation}{section}
\renewcommand{\labelenumi}{\roman{enumi})}
\renewcommand\theenumi\labelenumi
\renewcommand{\leq}{\leqslant}
\renewcommand{\le}{\leqslant}
\renewcommand{\geq}{\geqslant}
\newcommand{\tl}{\tilde}
\newcommand{\Be}{\begin{equation}}
\newcommand{\Ees}{\end{equation*}}
\newcommand{\Bes}{\begin{equation*}}
\newcommand{\Ee}{\end{equation}}
\newcommand{\R}{\mathbb{R}}
\newcommand{\E}{\mathbb{E}}
\newcommand{\e}{\varepsilon}
\newcommand{\PP}{\mathbb{P}}
\newcommand{\dif}{\mathrm{d}}
\begin{document}

\title[Approximation of the ergodic measure of SDEs with singular drift by EM]
{
Approximation of the ergodic measure of SDEs with singular drift by Euler-Maruyama scheme
}

\author[X. Jin]{Xinghu Jin}
\address{Xinghu Jin: School of Mathematics,
Hefei University of Technology,
Hefei, Anhui, China;}
\email{2022800009@hfut.edu.cn}

\author[W. Wang]{Wei Wang}
\address{Wei Wang: School of Mathematics, University of Science and Technology of China, Hefei, Anhui, China;}
\email{ww12358@mail.ustc.edu.cn}

\author[L. Xu]{Lihu Xu}
\address{Lihu Xu: 1. Department of Mathematics,
Faculty of Science and Technology,
University of Macau,
Av. Padre Tom\'{a}s Pereira, Taipa
Macau, China; \ \ 2. UM Zhuhai Research Institute, Zhuhai, China.}
\email{lihuxu@umac.mo}

\author[T. Zhang]{Tusheng Zhang}
\address{Tusheng Zhang: School of Mathematics, University of Science and Technology of China, Hefei, Anhui, China.}
\email{tushengz@ustc.edu.cn, tusheng.zhang@manchester.ac.uk}

\keywords{Euler-Maruyama scheme; Singular drift; Zvonkin's transform; Schauder estimate; Invariant measure; Wasserstein-1 distance; Poisson equation}
\subjclass[2010]{}

 \begin{abstract}
 We study the approximation of the ergodic measure of the following stochastic differential equation (SDE) on $\mathbb{R}^d$:
\begin{eqnarray}\label{e:SDEE}
\dif X_t &=& (b_1(X_t)+b_2(X_t)) \dif t+\sigma(X_t) \dif W_t,
\end{eqnarray}
where  $W_t$ is a $d$-dimensional standard Brownian motion, and $b_1: \mathbb{R}^d \mapsto \mathbb{R}^d$, $b_2: \mathbb{R}^d \mapsto \mathbb{R}^d$ and $\sigma: \mathbb{R}^d \mapsto \mathbb{R}^{d\times d}$  are the functions to be specified in Assumption \ref{assump-1} below. In particular, $b_1$ satisfies $b_1\in \mathbb{L}^\infty(\R^d)\cap \mathbb{L}^1(\R^d)$ or $b_1 \in \mathcal{C}_b^{\alpha}(\mathbb{R}^d)$ with  $\alpha\in (0,1)$, which makes
the standard numerical schemes not work or fail to give a good convergence rate.

In order to overcome these two difficulties, we first apply a Zvonkin's transform to SDE \eqref{e:SDEE} and obtain a new SDE which has coefficients with nice properties and admits a unique ergodic measure $\widehat \mu$, then discretize the new equation by Euler-Maruyama scheme to approximate $\widehat \mu$, and finally use the inverse Zvonkin's transform to get an approximation of the ergodic measure of SDE \eqref{e:SDEE}, denoted by $\mu$. Our approximation method is inspired by Xie and Zhang \cite{xie2017ergodicity}.

The proof of our main result is based on the method of introducing a stationary Markov chain, a key ingredient in this method is establishing the regularity of a Poisson equation, which is done by combining the classical PDE local regularity and a nice extension trick introduced by Gurvich \cite{gurvich2014diffusion}.
 \end{abstract}

\maketitle

% \tableofcontents\thispagestyle{plain}

\section{Introducation} \label{sec:intro}
We are concerned with the following stochastic differential equation (SDE) on $\mathbb{R}^d$:
\begin{eqnarray}\label{e:SDE}
\dif X_t &=& (b_1(X_t)+b_2(X_t)) \dif t+\sigma(X_t) \dif W_t,
\end{eqnarray}
where  $W_t$ is a $d$-dimensional standard Brownian motion, and $b_1: \mathbb{R}^d \mapsto \mathbb{R}^d$, $b_2: \mathbb{R}^d \mapsto \mathbb{R}^d$ and $\sigma: \mathbb{R}^d \mapsto \mathbb{R}^{d\times d}$  are the functions to be specified in Assumption \ref{assump-1} below. This assumption guarantees that SDE \eqref{e:SDE} admits a unique ergodic measure, denoted by $\mu$.  We will propose a numerical scheme for approximating $\mu$, in particular when $b_1$ is singular so that standard numerical schemes will not work or fail to give a good convergence rate.

Euler-Maruyama (EM) scheme is a popular method for numerically solving SDE and has been intensively studied in the past several decades. Most of the known results are about the convergence of EM schemes in finite time interval, see \cite{Bao2019convergence,Fang2016adaptive,Fang2020adaptive,Higham2002strong,yin2010approximation,yuan2008a} and references therein, there are not many papers for studying this convergence when the time tends to infinity. Durmus and Moulines studied the unadjusted Langevin sampling in their celebrated work \cite{Durmus2017Nonasymptotic}, where the drift term $b$ is in a gradient form, and obtained a upper bound for the distance between the ergodic measures of the sampling and the limiting SDE.  More recently, Pages and Panloup \cite{Pages2020Unajusted} gave the error bound in the total variation distance under the similar conditions. If the drift $b$ is locally Lipschitz, Mattingly et al. \cite{Mattingly2002ergodicity} obtained the convergence rate of its EM approximation for invariant measure under a certain distance. \cite{huang2018the,yang2020The} studied the strong convergence of EM scheme in a finite time interval when $b$ is H\"{o}ler continuous, in which they  used the Zvonkin's transform.

In this paper, we assume that the drift term $b:=b_1+b_2$ satisfies one of the following two conditions: (i) $b_1 \in \mathbb{L}^\infty(\mathbb{R}^d)\cap \mathbb{L}^1(\mathbb{R}^d)$, (ii) $b_1 \in \mathcal{C}_b^{\alpha}(\mathbb{R}^d)$ with $\alpha \in (0,1)$. For the case (i), because the value of $b(x)$ for a given $x \in \R^d$ makes no sense, the EM scheme of SDE \eqref{e:SDE} usually fails. For the case (ii), the corresponding EM scheme will converge very slowly in particular as $\alpha$ is small.

In order to overcome the aforementioned two difficulties, we first apply a Zvonkin's transform, denoted by $\Phi$, to SDE \eqref{e:SDE} and obtain a new SDE which has coefficients with nice properties and admits a unique ergodic measure $\widehat \mu$, then discretize the new equation by EM scheme to approximate $\widehat \mu$, and finally
use the inverse Zvonkin's transform $\Phi^{-1}$ to get an approximation of $\mu$. This new scheme is stimulated by the pioneering work by Xie and Zhang \cite{xie2017ergodicity}. We shall show that our new EM scheme performs well and provides a nearly optimal convergence rate.

Let us briefly describe the details for the proof of the main theorem, Theorem \ref{thm:main} below.  Under Assumption \ref{assump-1} below, following the argument in Xie and Zhang \cite{xie2017ergodicity}, we know the new SDE \eqref{e:SDE-1} is ergodic and $\mu=\widehat \mu \circ \Phi$.
As $b_1 \in \mathbb{L}^\infty(\mathbb{R}^d)\cap \mathbb{L}^1(\mathbb{R}^d)$, the new equation has a drift term which is $\gamma$-H\"{o}lder continuous for any $0<\gamma<1$. By the method of introducing a stationary Markov chain, we prove that the approximation error is in an order of $\eta^{\gamma/2}$ for any $0<\gamma<1$ ($\eta$ is the step size of the EM scheme).  A key ingredient in this method is establishing the regularity of a Poisson equation, in which we use the classical PDE local regularity results in Gilbarg and Trudinger \cite[Theorem 6.2]{gilbarg1977schauder} and a nice extension trick introduced by Gurvich \cite{gurvich2014diffusion}. As $b_1 \in \mathcal{C}_b^{\alpha}(\mathbb{R}^d)$, one can discretize SDE \eqref{e:SDE} directly but only get an approximation error of the order $\eta^{\alpha/2}$. However, by combining a Zvonkin's transform and the EM scheme, we can make the approximation error improved to be $\eta^{1/2} |\log \eta|$, this improvement is especially significant when $\alpha$ is small.

Zvonkin's transform was first put forward by Zvonkin when he constructed in \cite{zvonkin1974} a strong solution for the SDE with a "bad" drift. Afterwards, Krylov and R\"{o}ckner \cite{krylov2005} used the same method to obtain the existence and uniqueness of strong solutions to stochastic equations with a local $\mathbb{L}^{x}_p \mathbb{L}^{t}_q$-integrability drift $b$ with $d/p +2/q < 1$. Since then, there has been a surge of studying strong solutions for SDEs with singular drifts, see for instance \cite{Wang2016zhang,xie2017ergodicity,xie2016ergodicity,zhangxc2011} and the references therein.

 % Here we consider the drift term of SDE \eqref{e:SDE} to be singular, specifically speaking, {the singular part of $b$ belongs to $\mathbb{L}^\infty(\mathbb{R}^d)\cap \mathbb{L}^1(\mathbb{R}^d)$ or $\mathcal{C}^{\alpha}(\mathbb{R}^d,\mathbb{R}^d)$.} In this setting, it is hard to obtain the Euler-Maruyama (for short, EM) discretization approximation of equation \eqref{e:SDE} directly since the low regularity of the drift. Hence we will change this  equation by Zvonkin transform, which will improve the regularity of drift term. Since  EM discretization approximation of SDE \eqref{e:SDE} fails, we consider the distance of  the invariant measure of \eqref{e:SDE} and the pull back of EM discretization of the transformed equation, by means of the relationship of the invariant measure between original equation and the transformed equation. We believe that this is meaningful and reasonable for the singular drift SDE, this gives a method to approximate the SDE with singular coefficients by the EM algorithm.

%{\lihu I will write more for the introduction.......}

{This paper will be divided into five sections. In Section \ref{sec:main}, we present assumptions on the drift term $b$ and introduce the Zvonkin's transform, EM scheme, and describe the main theorem. And then we give the regularity for Zvonkin's transform and the results for ergodicity in Section \ref{sec-zovero}. In Section \ref{sec:proof-main}, we give the proof of the main results. In Sections \ref{sec:Calpha} and \ref{sec:Jf-Md2}, we prove two propositions about the regularity of Poisson equation by Schauder estimate in PDE theory.
%The proofs for ergodicty are all postponed in Appendix \ref{app:ergodicity}. %
}

\vskip 3mm

Let us finish this section with some notations that will be frequently used later.
Let $\mathcal{C}(\mathbb{R}^d)$ denote the collection of all continuous functions defined on $\mathbb{R}^d$ and $\mathcal{C}^k(\mathbb{R}^d)$ denotes the collection of $k$-th continuously differentiable functions with integers $k\geq 1$. For $f\in \mathcal{C}^2(\mathbb{R}^d)$, we denote by $\nabla f(x)\in \mathbb{R}^d$ and $\nabla^2 f(x)\in \mathbb{R}^{d\times d}$ the gradient and Hessian matrix for function $f$. And $\mathcal{C}_b (\mathbb{R}^d)$ denotes the family of bounded continuous functions.

The $\alpha$-H\"{o}lder and Lipschitz continuous functions will play an important role in studying the regularity of the Poisson equation below.

For $\alpha \in (0,1)$, we say that  $f$ is $\alpha$-H\"{o}lder continuous with exponent $\alpha$ in $\R^d$ if the semi-norm
\begin{eqnarray}\label{e:Calpha}
[f]_{\alpha} := \sup\limits_{x,y\in \R^d, x\neq y} \frac{ |f(x)-f(y)|  }{ |x-y|^{\alpha} }
\end{eqnarray}
is finite.    $\mathcal{C}^{\alpha}(\mathbb{R}^d)$ denotes the space of  functions  whose $\mathcal{C}^{\alpha}$-norm is finite.  For non-negative integers $k$ and $\alpha\in(0,1]$, the H\"{o}lder space $\mathcal{C}^{k,\alpha}(\mathbb{R}^d)$ is defined as the subspace of $\mathcal{C}^{k}(\mathbb{R}^d)$ consisting of functions whose $k$-th order partial derivatives are $\alpha$-H\"{o}lder continuous. And let $\mathcal{C}^{k,\alpha}_b (\mathbb{R}^d)$ be the space containing all the bounded $\mathcal{C}^{k,\alpha}$ functions  on $\mathbb{R}^d$.

The following notations are adopted from Gilbarg and Trudinger \cite[Section 4]{gilbarg1977schauder}. Let $\mathcal{D}$ be an open subset of $\mathbb{R}^d$. For integers $k=0, 1, 2$ and $\alpha \in (0,1]$, denote
\begin{align*}
[f]_{k,0;\mathcal{D}} &= [f]_{k;\mathcal{D}}=\sup_{x \in \mathcal{D}}   |\nabla^k f(x)|, \\
[f]_{k,\alpha;\mathcal{D}} &= \sup_{\substack{x,y \in \mathcal{D}, x\neq y}}  \frac{ |\nabla^k f(x)- \nabla^k f(y)|  }{ |x-y|^{\alpha} }, \\
|f|_{k;\mathcal{D}} &=\sum_{j=0}^{k} [f]_{j,0;\mathcal{D}}, \\
|f|_{k,\alpha;\mathcal{D}} &= |f|_{k;\mathcal{D}}+[f]_{k,\alpha;\mathcal{D}},
\end{align*}
where $|\nabla^k f(x)-\nabla^k f(y)|$ represents Euclidean distance for $k=0,1$, and $|\nabla^2 f(x)-\nabla^2 f(y)|$ represents the Hilbert-Schmidt norm. In particular, for $k=0$,  $\alpha\in (0,1)$, we denote $[f]_{0,\alpha; \mathcal{D}}=[f]_{\alpha;\mathcal{D}}$.

Let $\mathbb{L}^p(\mathbb{R}^d)$ be the space of all Borel functions $f$ on $\mathbb{R}^d$ with $\mathbb{L}^p$-norm
\begin{equation*}
\|f\|_p:=\left(\int_{\mathbb{R}^d} |f(x)|^p \mathrm{d} x\right)^{1/p} <\infty.
\end{equation*}
Denote by $\mathcal{B}_b(\R^d)$  the space of all bounded Borel-measurable functions. For any $f\in \mathcal{B}_b(\R^d)$,  its $\mathbb{L}^\infty$ norm is defined by
$$\|f\|_{\infty}={\rm ess\ sup}_{x\in \mathbb{R}^d} |f(x)|.$$

For $(p,q) \in [1,\infty] \times (0,2]\setminus \{(\infty,1), (\infty,2)\}$, let $\mathbb{H}_p^q : = (I-\Delta)^{-q/2}(\mathbb{L}^p(\R^d))$ denote the usual Bessel potential space with the norm (see, Xie and Zhang \cite[Section 4]{xie2017ergodicity})
\begin{eqnarray*}
\| f \|_{q,p} &=& \| (I-\Delta) ^{q/2} f  \|_{p}
\ \ \asymp \ \   \| f \|_p + \| \Delta^{q/2} f \|_p,
\end{eqnarray*}
where $x\asymp y$, for $x, y \in \R$, means that there exist some positive constants $c$ and $C$ such that $cx \leq y \leq Cx$, $(I-\Delta)^{q/2}f$ and $\Delta^{q/2}f$ are defined through the Fourier transformation
\begin{equation*}
(I-\Delta)^{q/2}f:=\mathcal{F}^{-1}\left((1+|\cdot|^2)^{q/2} \mathcal{F}f\right), \quad \Delta^{q/2} f :=\mathcal{F}^{-1} (|\cdot|^{q} \mathcal{F}f),
\end{equation*}
 where $\mathcal F$ denotes the Fourier transform.
In particular for  $\mathbb{H}_p^2$, an equivalent norm is defined by
\begin{eqnarray*}
\| f \|_{2,p}= \| f  \|_{p}  + \| \nabla^2 f \|_{p}.
\end{eqnarray*}

Since we need to consider the distance between two probability measures, we recall the Wasserstein-1 distance between two probability measures $\mu_1$ and $\mu_2$ defined as follows (see Hairer and Mattingly \cite[p. 2056]{hairer2008spectral}),
\begin{eqnarray}\label{e:dW}
d_{W}(\mu_1,\mu_2)
&=& \sup_{h \in {\rm Lip(1)}}\left\{\int h(x) \mu_1 (\dif x) - \int h(x) \mu_2 (\dif x) \right \} \nonumber   \\
&=& \sup_{h \in {\rm Lip_0(1)}} \left \{\int h(x) \mu_1 (\dif x) - \int h(x) \mu_2 (\dif x) \right \},
\end{eqnarray}
where ${\rm Lip(1)}$ is the set of Lipschitz functions with Lipschitz constant $1$, that is,  ${\rm Lip(1)}=\{h(\cdot): |h(x)-h(y)|\leq |x-y|$ for all $x,y \in \R^d \}$, and ${\rm Lip_0(1)}:=\{h(\cdot) \in {\rm Lip(1)}: h(0)=0\}$. In addition, for a probability measure $\nu$ and a function $f$, we denote $\nu(f)=\int f(x) \nu(\dif x)$.

For any matrix $A,B\in \mathbb{R}^{d\times d}$, we define the Hilbert-Schmidt inner product as $\langle A, B\rangle_{{\rm HS}}:=\sum_{i,j=1}^{d} A_{ij} B_{ij}$. Given a matrix $A\in \mathbb{R}^{d\times d}$, its Hilbert-Schmidt norm is $\|A\|_{{\rm HS}}=\sqrt{\sum_{i,j=1}^{d} A^2_{ij}}$. For matrixes $A$ and $B$, $A\leq B$ means $B-A$ is positive definite, and ${\rm I_d}$ means the $d$-dimensional identity matrix. $A^{\prime}$ means the transpose of matrix $A$.

For any real number $R>0$ and $x\in \R^d$, we denote the open ball with radius $R$ and center $x$ in $\mathbb{R}^d$ as follows:
\begin{eqnarray*}
B_R(x) &=& \{ z\in \mathbb{R}^d: |z-x| < R   \}.
\end{eqnarray*}

\section{Main results}\label{sec:main}
%\vskip 0.3cm

Throughout this paper, we impose the following assumptions:
\begin{assumption}\label{assump-1}
({\bf A1}) The drift term $b(x)$ has the following form
\begin{eqnarray}\label{e:b}
b(x) &=& b_1(x)+b_2(x),
\end{eqnarray}
where $b_2$ is such that there are  some positive constants $\theta_1,\theta_2,\theta_3>0$ satisfying
\begin{eqnarray}
\langle x,b_2(x)\rangle &\leq& -\theta_1 |x|^{2}+\theta_2,\label{e:dissi-b2} \\
|b_2(x)-b_2(y)| &\leq&    \theta_3 |x-y|, \label{e:lingro-b2}
\end{eqnarray}
for all $x, y \in \R^d$. $b_1$ is called singular part and satisfies {\bf one} of the following two conditions:

{\bf Case 1:}  $b_1\in \mathbb{L}^\infty(\R^d)\cap \mathbb{L}^1(\R^d)$,

{\bf Case 2:} $b_1 \in \mathcal{C}_b^{\alpha}(\mathbb{R}^d)$ with  $\alpha\in (0,1)$.
\vskip 3mm
\noindent ({\bf A2}) The diffusion matrix $\sigma$ is Lipschitz  and there is a positive constant $\lambda_{\sigma}\in (0,1)$ such that
\begin{eqnarray}\label{e:sigma-ell}
\lambda_{\sigma} {\rm I}_d  \ \ \leq \ \  \sigma(x)\sigma^{\prime}(x) \ \  \leq \ \ \lambda_{\sigma}^{-1} {\rm I}_d.
\end{eqnarray}
\end{assumption}

\begin{remark}\label{remark-Lp}
Assumption {\bf (A1) Case 1} implies $b_1\in \mathbb{L}^p(\mathbb{R}^d)$ for any $1<p<\infty$.
\end{remark}

Due to the singularity of $b_1$, we will use the well known Zvonkin's transform to study the SDE \eqref{e:SDE}.
%For the purpose of  improving the regularity of the coefficients and obtain its EM discretization approximation, we need the following Zvonkin's transform, which is commonly used in studying the SDEs with singular drifts. For the convenience of the reader, we recall some properties of  Zvonkin's transform,

\subsection{Zvonkin's transform}
We consider the following elliptic equation: for $\lambda>0$
\begin{eqnarray}\label{e:pde-b1}
(\mathcal{L}_2^{\sigma}-\lambda) u + \mathcal{L}_1^{b_1} u &=& -b_1,
\end{eqnarray}
where \begin{eqnarray*}
\mathcal{L}_1^{b_1} u(x) \ \ = \ \ \langle b_1(x), \nabla u(x) \rangle,
\quad
\mathcal{L}_2^{\sigma} u(x) \ \ = \ \  \frac{1}{2} \langle \sigma(x) \sigma^{\prime}(x), \nabla^2 u(x) \rangle_{\rm HS}.
\end{eqnarray*}
Under Assumption \ref{assump-1}, the Eq. \eqref{e:pde-b1} admits a unique solution with certain regularity, see Lemmas \ref{lem:regu-ul} and \ref{lem:regu-u} below. Define
\begin{eqnarray}\label{e:Phi}
\Phi(x) &=& x+ u(x),
\end{eqnarray}
and $\Phi: \R^d \rightarrow \R^d$ is called {Zvonkin's transform}. For more details about Zvonkin's transform, we refer the reader to \cite{flandoli2010,krylov2005,xie2016ergodicity,zhangxc2011} and the references therein.

Thanks to Lemmas \ref{lem:regu-ul} and \ref{lem:regu-u} below,
we can apply  Zvonkin's transform to the SDE \eqref{e:SDE} and immediately obtain the following lemma by applying the It\^{o}'s formula.
\begin{lemma}\label{lem:Y}
$X_t$ solves SDE \eqref{e:SDE} if and only if $Y_t:=\Phi(X_t)$ solves
\begin{eqnarray}\label{e:SDE-1}
\dif Y_t &=& \widehat{b}(Y_t) \dif t + \widehat{\sigma}(Y_t) \dif W_t
\end{eqnarray}
with initial value $Y_0=\Phi(X_0)$ and
\begin{eqnarray}\label{e:bsigma}
\widehat{b}(y) \ = \ (\lambda u+\nabla \Phi \cdot b_2) \circ \Phi^{-1}(y), \quad
\widehat{\sigma}(y) \ = \ (\nabla \Phi \cdot \sigma) \circ \Phi^{-1}(y), \forall y\in\R^d.
\end{eqnarray}
\end{lemma}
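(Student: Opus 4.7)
The plan is to apply It\^o's formula to the transformation $\Phi(x)=x+u(x)$ and exploit the defining PDE \eqref{e:pde-b1} to identify the new coefficients. Suppose first that $X_t$ solves \eqref{e:SDE}. Using the regularity of $u$ asserted in Lemmas \ref{lem:regu-ul} and \ref{lem:regu-u} (invoked as a black box), I apply It\^o's formula componentwise to $u(X_t)$ to get
\begin{equation*}
\dif u(X_t) = \bigl[\mathcal{L}_1^{b_1} u + \mathcal{L}_2^\sigma u + \nabla u \cdot b_2\bigr](X_t)\,\dif t + \nabla u(X_t)\,\sigma(X_t)\,\dif W_t.
\end{equation*}
Rewriting \eqref{e:pde-b1} as $\mathcal{L}_2^\sigma u + \mathcal{L}_1^{b_1} u = \lambda u - b_1$ and substituting yields
\begin{equation*}
\dif u(X_t) = \bigl[\lambda u(X_t) - b_1(X_t) + \nabla u(X_t)\,b_2(X_t)\bigr]\,\dif t + \nabla u(X_t)\,\sigma(X_t)\,\dif W_t.
\end{equation*}
Adding $\dif X_t$ to this expression, the two $b_1(X_t)\,\dif t$ terms cancel and factoring $I+\nabla u(X_t)=\nabla\Phi(X_t)$ out of both the $b_2$-drift and the diffusion gives
\begin{equation*}
\dif Y_t = \bigl[\lambda u + \nabla\Phi\cdot b_2\bigr](X_t)\,\dif t + (\nabla\Phi\cdot\sigma)(X_t)\,\dif W_t.
\end{equation*}
Substituting $X_t=\Phi^{-1}(Y_t)$ recovers precisely \eqref{e:SDE-1} with $\widehat b,\widehat\sigma$ given by \eqref{e:bsigma}, and the initial condition $Y_0=\Phi(X_0)$ holds by definition.

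For the converse, suppose $Y_t$ solves \eqref{e:SDE-1}. By the same regularity lemmas, $\Phi$ is a $\mathcal{C}^1$-diffeomorphism of $\R^d$ with bounded Jacobian bounded away from singular (this follows from $\|\nabla u\|_\infty<1$ for $\lambda$ large enough, which is a standard feature of Zvonkin's transform). I then apply It\^o's formula to $X_t:=\Phi^{-1}(Y_t)$; the chain-rule identity $\nabla\Phi^{-1}\circ\Phi=(\nabla\Phi)^{-1}$ together with another use of the PDE \eqref{e:pde-b1} reverses the above computation and produces SDE \eqref{e:SDE}.

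The main obstacle is the legitimacy of It\^o's formula in Case~1, where $b_1\in\mathbb{L}^\infty\cap\mathbb{L}^1$ and the PDE solution $u$ will only lie in a Bessel potential space of type $\mathbb{H}_p^2$ (for large $p$), and not in classical $\mathcal{C}^2$. To handle this, I would invoke the generalized It\^o formula of Krylov for functions in $\mathbb{H}_p^2$, whose applicability is ensured by the uniform ellipticity of $\sigma\sigma'$ provided by assumption (A2); this is standard in the Zvonkin-transform literature (e.g.\ Krylov--R\"ockner \cite{krylov2005}, Xie--Zhang \cite{xie2017ergodicity}). In Case~2, classical Schauder theory gives $u\in\mathcal{C}^{2,\alpha}_b$ and the standard It\^o formula suffices, so the argument simplifies considerably. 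Apart from this regularity point the proof is a direct computation, with no subtlety beyond bookkeeping.
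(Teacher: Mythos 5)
The paper states Lemma \ref{lem:Y} without a written proof, remarking only that it follows ``immediately'' from It\^o's formula once the regularity of $u$ (Lemmas \ref{lem:regu-ul} and \ref{lem:regu-u}) is in place; your proposal supplies exactly the computation the paper has in mind. Your forward direction is correct: It\^o's formula applied to $u(X_t)$, substitution of the resolvent equation $\mathcal{L}_2^\sigma u + \mathcal{L}_1^{b_1}u = \lambda u - b_1$ to cancel the singular $b_1\,\dif t$ term, and factoring $\nabla\Phi = I+\nabla u$ out of the $b_2$-drift and the diffusion. The reverse direction via the diffeomorphism property of $\Phi$ (which hinges on $\|\nabla u\|_\infty<1$, guaranteed by taking $\lambda$ large in Lemmas \ref{lem:regu-ul}/\ref{lem:regu-u}) is also the standard argument. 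You also correctly flag the one genuine subtlety the paper glosses over: in Case~1 the solution $u$ lies only in $\mathbb{H}^2_p$ (hence $\mathcal C^{1,\gamma}_b$ after Sobolev embedding), not in classical $\mathcal C^2$, so the standard It\^o formula does not literally apply and one must invoke the Krylov--It\^o formula for $\mathbb{H}^2_p$-functions under uniform ellipticity, exactly as in Krylov--R\"ockner \cite{krylov2005} and Xie--Zhang \cite{xie2017ergodicity}. This is correct and is in fact a slightly more careful account than the paper itself gives.
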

Under Assumption \ref{assump-1}, we can show that the processes $(Y_t)_{t\geq 0}$ in \eqref{e:SDE-1} and $(X_t)_{t\geq 0}$ in $\eqref{e:SDE}$ are both exponentially ergodic, denoting their ergodic measures by $\widehat{\mu}$ and $\mu$ respectively, see Lemmas \ref{pro:ergodic-Y} and \ref{pro:ergodic-X} in Section \ref{sec-zovero}.

\subsection{\bf EM scheme and main results}
We aim to develop a numerical scheme to approximate the ergodic measure $\mu$ of the process $(X_t)_{t\geq 0}$.
Due to the relation
$$\mu=\widehat{\mu} \circ \Phi,$$
which is established in Xie and Zhang \cite[Proposition 2.8]{xie2017ergodicity}, it is natural to first develop a numerical scheme to approximate $\widehat{\mu}$ and then transform the approximation by $\Phi$.
To this end, let us first consider the EM scheme for SDE \eqref{e:SDE-1}.

Let $Z_0=Y_0$, the EM scheme for the SDE \eqref{e:SDE-1} reads as
\begin{eqnarray}\label{e:MC-R}
Z_{k+1} &=& Z_{k}+ \eta \widehat{b}(Z_k)+  \sqrt{\eta} \widehat{\sigma}(Z_k)\xi_{k+1}, \quad \forall  k\in \mathbb{N}_0,
\end{eqnarray}
where $\eta>0$ is the step size, $(\xi_k)_{k\in \mathbb{N}}$ are independent standard Gaussian random variables. For an integer $k$, $\xi_k$ is independent of $Z_0,Z_1,\cdots,Z_{k-1}$.
It is easy to see that $(Z_k)_{k\in \mathbb{N}_0}$ is a Markov chain.  Under Assumption \ref{assump-1}, we shall show that $(Z_k)_{k\in \mathbb{N}_0}$ is exponentially ergodic, denote its ergodic measure by $\widehat{\mu}_{\eta}$, see Lemma  \ref{pro:ergodic-MC} below.

We shall use the measure $\widehat{\mu}_{\eta}  \circ \Phi$ to approximate  $\mu$ and derive an error bound in Wasserstein-1 distance, this is precisely stated in the main theorem as follows.

%\begin{table}[http]
%\caption{Notation for corresponding processes}
%\begin{center}
%\begin{tabular}{c|c|c|c|c}
%\hline
%                          & process   & semigroup & invariant measure & statement \\
%\hline
%  Original process                         & $X_t$ & $P_t$ & $\mu$ & Proposition \ref{pro:ergodic-X}  \\
%\hline
 %                Zvonkin transform                                                              & $Y_t$ & $%\widehat{P}_t$ & $\widehat{\mu}$ & Proposition \ref{pro:ergodic-Y}  \\
%                 \hline
%  EM scheme   & $Z_k$ & $\widehat{Q}_k$ & $\widehat{\mu}_{\eta}$ & %Proposition \ref{pro:ergodic-MC}  \\
%\hline
%\end{tabular}
%\end{center}
%\label{not:proes}
%\end{table}

\begin{theorem}\label{thm:main}
(i) Let Assumption {\bf (A1) Case 1} and {\bf (A2)} hold, then for any  $\gamma\in (0,1)$, there exists some positive constant $C$ depending on $\gamma$ such that
\begin{eqnarray*}
d_W(\mu,\widehat{\mu}_{\eta} \circ \Phi)   &\leq&  C \eta^{\frac{\gamma}{2}}.
\end{eqnarray*}
Furthermore, for any given error $\varepsilon>0$, taking $\eta \asymp \varepsilon^{\frac{2}{\gamma}}$ and $k\asymp \varepsilon^{-\frac{2}{\gamma}}|\log \varepsilon|$, we know
\begin{eqnarray*}
d_W(\mathcal{L}(\Phi^{-1}(Z_k)),\mu) &\leq& \varepsilon,
\end{eqnarray*}
where $\mathcal{L}(\Phi^{-1}(Z_k))$ is the law of $\Phi^{-1}(Z_k)$.

%\marginnote{\lihu Write a bound for $d_W(\hat Z_k,\widehat{\mu}_{\eta} \circ \Phi)$}

(ii) Let Assumption {\bf (A1) Case 2} and {\bf (A2)}  hold, then there exists some positive constant $C$ independent of $\eta$ such that
\begin{eqnarray*}
d_W(\mu,\widehat{\mu}_{\eta} \circ \Phi)   &\leq&  C \eta^{\frac{1}{2}} |\log \eta|.
\end{eqnarray*}
Furthermore, for any given error $\varepsilon>0$, taking $\eta \asymp \varepsilon^{\frac{8}{3}}$ and $k\asymp \varepsilon^{-\frac{8}{3}}|\log \varepsilon|$, we know
\begin{eqnarray*}
d_W(\mathcal{L}(\Phi^{-1}(Z_k)),\mu) &\leq& \varepsilon,
\end{eqnarray*}
where $\mathcal{L}(\Phi^{-1}(Z_k))$ is the law of $\Phi^{-1}(Z_k)$.
%\marginnote{\lihu Write a bound for $d_W(\hat Z_k,\widehat{\mu}_{\eta} \circ \Phi)$}
\end{theorem}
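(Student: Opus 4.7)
The plan is to build on the identity $\mu=\widehat\mu\circ\Phi$ (Proposition 2.8 of \cite{xie2017ergodicity}) together with the global Lipschitz regularity of the inverse Zvonkin transform $\Phi^{-1}$, which follows from Lemmas \ref{lem:regu-ul} and \ref{lem:regu-u}. Set $L_{\Phi}:=\|\nabla\Phi^{-1}\|_\infty$. For every $h\in\mathrm{Lip}(1)$ the pushforward $h\circ\Phi^{-1}$ is $L_{\Phi}$-Lipschitz, hence
\begin{equation*}
d_W(\mu,\widehat\mu_\eta\circ\Phi)=\sup_{h\in\mathrm{Lip}(1)}\bigl|\widehat\mu(h\circ\Phi^{-1})-\widehat\mu_\eta(h\circ\Phi^{-1})\bigr|\leq L_{\Phi}\,d_W(\widehat\mu,\widehat\mu_\eta),
\end{equation*}
which reduces the problem to bounding the Wasserstein-1 distance between the invariant measures of the transformed SDE \eqref{e:SDE-1} and of its EM chain. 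The virtue of the Zvonkin transform is that, even though the original $b_1$ is singular, the new drift $\widehat b$ is H\"older in Case 1 and even more regular in Case 2, so classical numerical analysis becomes available on \eqref{e:SDE-1}.

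For the reduced estimate I would follow the standard Poisson-equation / stationary-Markov-chain scheme. Fix $\tilde h:=h\circ\Phi^{-1}$ and centre it by $\widehat\mu(\tilde h)$; let $\psi$ solve
\begin{equation*}
\widehat{\mathcal L}\psi(y)=\tilde h(y)-\widehat\mu(\tilde h),\qquad \widehat{\mathcal L}:=\tfrac{1}{2}\langle\widehat\sigma\widehat\sigma',\nabla^2\cdot\rangle_{\mathrm{HS}}+\langle\widehat b,\nabla\cdot\rangle,
\end{equation*}
which I would represent by $\psi(y)=-\int_0^\infty\bigl(\E_y[\tilde h(Y_t)]-\widehat\mu(\tilde h)\bigr)\,\dif t$ (well defined by the exponential ergodicity of $(Y_t)$ of Lemma \ref{pro:ergodic-Y}). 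Let $(Z_k)$ be the EM chain started from its invariant measure $\widehat\mu_\eta$ (Lemma \ref{pro:ergodic-MC}); stationarity forces $\E[\psi(Z_1)-\psi(Z_0)]=0$, and thus
\begin{equation*}
\widehat\mu_\eta(\tilde h)-\widehat\mu(\tilde h)=\E\bigl[\widehat{\mathcal L}\psi(Z_0)-\eta^{-1}(\psi(Z_1)-\psi(Z_0))\bigr].
\end{equation*}
A second-order Taylor expansion of $\psi$ along $Z_1=Z_0+\eta\widehat b(Z_0)+\sqrt\eta\,\widehat\sigma(Z_0)\xi_1$, combined with $\E\xi_1=0$ and $\E[\xi_1\xi_1']=\mathrm{I}_d$, annihilates the leading terms against $\widehat{\mathcal L}\psi(Z_0)$ and leaves a quadratic remainder of the form $\tfrac{1}{2}(Z_1-Z_0)'\bigl(\nabla^2\psi(\zeta)-\nabla^2\psi(Z_0)\bigr)(Z_1-Z_0)$ with $\zeta$ on the segment $[Z_0,Z_1]$. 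In Case 1, H\"older continuity of $\nabla^2\psi$ together with $|Z_1-Z_0|\leq C\sqrt\eta(1+|\xi_1|)$ bounds this remainder by $[\nabla^2\psi]_\gamma\,\eta^{1+\gamma/2}$; after dividing by $\eta$ we obtain the announced rate $\eta^{\gamma/2}$. Case 2 is the same modulo a truncation of $|\xi_1|$ at the scale $\sqrt{|\log\eta|}$, which turns the would-be $\eta^{1/2}$ into $\eta^{1/2}|\log\eta|$.

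The hard step is therefore to prove the $\mathcal C^{2,\gamma}$ regularity of $\psi$ (respectively the almost-$\mathcal C^{2,1}$ statement needed in Case 2), \emph{with quantitative control} by $[\tilde h]_1$; this is precisely the content of Sections \ref{sec:Calpha} and \ref{sec:Jf-Md2}. The route I would follow is to start from the probabilistic representation of $\psi$, apply the interior Schauder estimates of \cite[Theorem 6.2]{gilbarg1977schauder} on balls $B_R(x)$ of uniform radius, and then patch the local estimates into a global $\mathbb R^d$-bound via the extension trick of \cite{gurvich2014diffusion}; the ergodicity of $(Y_t)$ supplies the decay of $\psi$ and its derivatives at infinity. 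Once this Poisson regularity is in hand, the second half of the theorem follows from the triangle inequality
\begin{equation*}
d_W\bigl(\mathcal L(\Phi^{-1}(Z_k)),\mu\bigr)\leq L_{\Phi}\,d_W\bigl(\mathcal L(Z_k),\widehat\mu_\eta\bigr)+d_W(\widehat\mu_\eta\circ\Phi,\mu),
\end{equation*}
combined with the exponential ergodicity of the EM chain, which yields $d_W(\mathcal L(Z_k),\widehat\mu_\eta)\leq Ce^{-ck\eta}$; balancing the two contributions against the prescribed error $\varepsilon$ produces the claimed choices of $\eta$ and $k$.
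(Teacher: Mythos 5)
Your proposal follows essentially the same route as the paper: reduce $d_W(\mu,\widehat\mu_\eta\circ\Phi)$ to $d_W(\widehat\mu,\widehat\mu_\eta)$ using the bi-Lipschitz property of $\Phi$ from \eqref{e:phi-infty}, then run the stationary Markov-chain/Poisson-equation argument with $Z_0\sim\widehat\mu_\eta$, Taylor-expand $\psi(Z_1)-\psi(Z_0)$ to second order, cancel the leading terms against $\widehat{\mathcal{A}}\psi(Z_0)$, and control the remainder by the interior Schauder regularity of $\psi$ (Propositions \ref{pro:regu1} and \ref{pro:regu2}), closing with the triangle inequality and the exponential ergodicity of Lemma~\ref{pro:ergodic-MC} for the second half. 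That is exactly the paper's proof.

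The one place your sketch departs from the paper and is, as stated, not quite right is the origin of the $|\log\eta|$ factor in Case~2. You attribute it to ``a truncation of $|\xi_1|$ at the scale $\sqrt{|\log\eta|}$''; but truncating the Gaussian noise at that scale and applying a Lipschitz bound for $\nabla^2\psi$ on the good event would give $\eta^{1/2}|\log\eta|^{3/2}$, not $\eta^{1/2}|\log\eta|$, and in any case $\nabla^2\psi$ is \emph{not} Lipschitz here. The actual mechanism in the paper is the classical Schauder endpoint $\alpha=1$: since the data $h$ is only Lipschitz, $\nabla^2 f$ has a log-Lipschitz modulus of continuity $|\nabla^2 f(x)-\nabla^2 f(y)|\lesssim(1+|x|^{2/\alpha+1})\,|x-y|\,|\log|x-y||$ (Proposition~\ref{pro:regu2}, proved in Section~\ref{sec:Jf-Md2} via Lemma~\ref{pro:local}), and plugging this directly into the remainder $\E\bigl[|\delta|^2\,|\nabla^2 f(Z_0+\tilde r r\delta)-\nabla^2 f(Z_0)|\bigr]$ with $|\delta|\asymp\sqrt\eta$ gives $\eta^{3/2}|\log\eta|$ and hence $\eta^{1/2}|\log\eta|$ after dividing by $\eta$. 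An alternative that would also deliver the right exponent is to use the Hölder estimates $[\nabla^2\psi]_{\beta}\lesssim (1-\beta)^{-1}$ for $\beta<1$ and optimize over $\beta$, but not the noise truncation you propose. Note also that in both cases the Hölder/log-Lipschitz bounds of Propositions~\ref{pro:regu1}--\ref{pro:regu2} are only local (on balls of radius $1/8$) with polynomial weights in $|x|$, so the paper additionally splits the remainder into $\{|\delta|\leq 1/8\}$ and $\{|\delta|>1/8\}$ and handles the latter with moment bounds and Chebyshev; your sketch elides this, though you correctly flag the Poisson regularity as the hard step.
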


\section{Regularity of Zvonkin's transform and Ergodicity}\label{sec-zovero}

\subsection{Regularity of Zvonkin's transform}

The following Lemma is from
Xie and Zhang \cite[Theorem 7.6, (4.2) and (7.23)]{xie2017ergodicity}.
\begin{lemma}[Regularity, $b_1 \in \mathbb{L}^{\infty}(\R^d)\cap\mathbb{L}^{1}(\R^d)$] \label{lem:regu-ul}
{Let Assumption {\bf (A1) Case 1} and {\bf (A2)} hold. Then, for any $p>d$, there exists some $\lambda_1=\lambda_1(p)$ such that for all $\lambda\geq \lambda_1$, we have a unique solution $u\in \mathbb{H}_{p}^{2}$ to Eq. \eqref{e:pde-b1} and constants $C=C(p,d,\lambda)$, $C'=C'(p,d)$ such that,
\begin{eqnarray*}
\|\nabla^2 u\|_{p} &\leq& C \|b_1\|_{p}, \\
\|u\|_{\infty}+\|\nabla u\|_{\infty} &\leq& C' \lambda^{-\frac{1}{2} \gamma},
\end{eqnarray*}
where $\gamma=1-d/p$.

In particular,  we have $u\in \mathcal{C}_b^{1,\gamma}(\R^d)$  by the Sobolev embedding $\mathbb{H}_p^2\subset \mathcal{C}_b^{1,\gamma}$.
}
\end{lemma}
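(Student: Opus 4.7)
The plan is to prove this regularity lemma by recasting Eq.~\eqref{e:pde-b1} as a fixed-point problem in the Bessel potential space $\mathbb{H}_p^2$ and exploiting classical $\mathbb{L}^p$-resolvent estimates for the uniformly elliptic operator $\mathcal{L}_2^\sigma$. Concretely, under Assumption~\ref{assump-1}~(A2) the matrix $\sigma\sigma^\prime$ is uniformly elliptic and Lipschitz, so standard Calder\'on--Zygmund / Agmon--Douglis--Nirenberg theory gives, for every $p>d$ and all $\lambda$ sufficiently large, a bounded inverse $(\lambda-\mathcal{L}_2^\sigma)^{-1}\colon\mathbb{L}^p(\R^d)\to\mathbb{H}_p^2$ with the estimate
\begin{equation*}
\|(\lambda-\mathcal{L}_2^\sigma)^{-1} f\|_{2,p}\ \le\ C\|f\|_p,\qquad f\in\mathbb{L}^p(\R^d).
\end{equation*}
Because $b_1\in\mathbb{L}^\infty\cap\mathbb{L}^1\subset\mathbb{L}^p$ for every $p>1$ (Remark~\ref{remark-Lp}), the right-hand side $-b_1$ lies in $\mathbb{L}^p$ and the perturbation $\mathcal{L}_1^{b_1}v=\langle b_1,\nabla v\rangle$ maps $\mathbb{H}_p^2$ into $\mathbb{L}^p$.

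First, I would define the map $T\colon\mathbb{H}_p^2\to\mathbb{H}_p^2$ by
\begin{equation*}
T(v)\ =\ (\lambda-\mathcal{L}_2^\sigma)^{-1}\bigl(\mathcal{L}_1^{b_1} v + b_1\bigr)
\end{equation*}
and show that $T$ is a contraction once $\lambda$ is large enough. The key point is that the operator norm of $(\lambda-\mathcal{L}_2^\sigma)^{-1}\mathcal{L}_1^{b_1}$ on $\mathbb{H}_p^2$ can be made arbitrarily small by choosing $\lambda\ge\lambda_1(p)$: one interpolates between the full-regularity bound $\|\nabla^2 v\|_p\lesssim\|\cdot\|_p$ and a decay estimate $\|\nabla v\|_\infty\to 0$ as $\lambda\to\infty$, using $\|b_1\|_\infty<\infty$ to absorb the coefficient. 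Banach's fixed-point theorem then yields the unique solution $u\in\mathbb{H}_p^2$, and taking $v=u$ in the estimate immediately produces $\|\nabla^2 u\|_p\le C\|b_1\|_p$.

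For the $\mathbb{L}^\infty$ bounds with the explicit decay $\lambda^{-\gamma/2}$, I would pass through a probabilistic (Feynman--Kac) representation: if $X_t^x$ denotes the diffusion generated by $\mathcal{L}_2^\sigma+\mathcal{L}_1^{b_1}$ started at $x$, then
\begin{equation*}
u(x)\ =\ \E\!\int_0^\infty e^{-\lambda t}\,b_1(X_t^x)\,\dif t,
\end{equation*}
and rescaling the time variable $t=s/\lambda$ together with Gaussian-type heat-kernel bounds on the transition density yields $\|u\|_\infty\lesssim\lambda^{-\gamma/2}\|b_1\|_p$ for the appropriate $p>d$, with an analogous computation for $\nabla u$ using gradient bounds on the heat kernel (this is essentially the argument in Xie--Zhang~\cite{xie2017ergodicity}). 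An alternative, purely analytic route is to interpolate between the $\mathbb{L}^p$ bound on $u$ (of order $\lambda^{-1}$) and the $\mathbb{H}_p^2$ bound (of order $1$), and then apply the Sobolev embedding $\mathbb{H}_p^2\hookrightarrow\mathcal{C}_b^{1,\gamma}$ with $\gamma=1-d/p$.

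The main obstacle is tracking the precise $\lambda$-dependence: the resolvent estimate for $\nabla^2 u$ is $\lambda$-independent, so the smallness that drives the contraction, as well as the decay in $\lambda^{-\gamma/2}$, must come from interpolating the low-order norms with the top-order bound. Getting the interpolation constants to be uniform in $b_1$ (so that the final estimate depends only on $\|b_1\|_p$ and $\|b_1\|_\infty$) is the delicate point; once this is in hand, the Sobolev embedding $\mathbb{H}_p^2\subset\mathcal{C}_b^{1,\gamma}$ for $p>d$ upgrades $u$ to a $\mathcal{C}_b^{1,\gamma}$-function and completes the proof.
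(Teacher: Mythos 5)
The paper does not prove this lemma at all: it is imported verbatim from Xie and Zhang \cite{xie2017ergodicity} (Theorem 7.6 together with (4.2) and (7.23) there), so there is no in-paper argument to compare against. Your sketch is nevertheless a sensible reconstruction of the elliptic $\mathbb{L}^p$-theory argument that underlies the cited result, and the overall plan — solve by a fixed-point in $\mathbb{H}_p^2$ using Calder\'on--Zygmund resolvent bounds for $\lambda-\mathcal{L}_2^\sigma$, absorb $\mathcal{L}_1^{b_1}$ as a perturbation for $\lambda$ large, and then obtain the $\lambda^{-\gamma/2}$ decay by Gagliardo--Nirenberg interpolation and the embedding $\mathbb{H}_p^2\hookrightarrow\mathcal{C}_b^{1,\gamma}$ — is the right one.

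Two points in your write-up are imprecise and would need to be fixed in a full proof. First, the map $T(v)=(\lambda-\mathcal{L}_2^\sigma)^{-1}(\mathcal{L}_1^{b_1}v+b_1)$ is \emph{not} a contraction in the standard $\|\cdot\|_{2,p}$ norm: $T(v_1)-T(v_2)=(\lambda-\mathcal{L}_2^\sigma)^{-1}\mathcal{L}_1^{b_1}(v_1-v_2)$, and the second-derivative part of the resolvent bound carries no $\lambda$-gain, so one only gets $\|T(v_1)-T(v_2)\|_{2,p}\le C\|b_1\|_\infty\|\nabla(v_1-v_2)\|_p$, which is $O(1)$. The smallness must be extracted by working with the $\lambda$-scaled equivalent norm $\lambda\|v\|_p+\sqrt{\lambda}\,\|\nabla v\|_p+\|\nabla^2 v\|_p$ (or, equivalently, by an a priori estimate on $\|\nabla u\|_p\lesssim\lambda^{-1/2}\|b_1\|_p$ plus a continuity method): only in that norm does the factor $\lambda^{-1/2}$ appear in front of $\|b_1\|_\infty$, making $T$ contractive for $\lambda\ge\lambda_1(p)$. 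Your phrase ``interpolates \dots\ and a decay estimate $\|\nabla v\|_\infty\to 0$'' conflates the a priori bound on the solution with what is needed on an arbitrary $v$ in the contraction step and should be replaced by the $\lambda$-weighted norm computation. Second, the probabilistic route via Feynman--Kac for the diffusion generated by the \emph{full} operator $\mathcal{L}_2^\sigma+\mathcal{L}_1^{b_1}$ is circular in this setting, since well-posedness of that diffusion with $b_1$ merely in $\mathbb{L}^\infty\cap\mathbb{L}^1$ is exactly what the Zvonkin transform (hence this lemma) is supposed to deliver; your ``alternative, purely analytic route'' is the one that actually closes: from $\|u\|_p\lesssim\lambda^{-1}\|b_1\|_p$, $\|\nabla u\|_p\lesssim\lambda^{-1/2}\|b_1\|_p$, $\|\nabla^2 u\|_p\lesssim\|b_1\|_p$, Gagliardo--Nirenberg gives $\|\nabla u\|_\infty\lesssim\|\nabla^2 u\|_p^{d/p}\|\nabla u\|_p^{1-d/p}\lesssim\lambda^{-\gamma/2}\|b_1\|_p$ with $\gamma=1-d/p$, and similarly (with an even better exponent) for $\|u\|_\infty$.
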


By the similar method as in Flandoli et al. \cite[Section 2]{flandoli2010}, we have the regularity for $b_1\in \mathcal{C}_b^{\alpha}(\R^d)$ as follows,
\begin{lemma}[Regularity, $b_1\in \mathcal{C}_b^{\alpha}(\R^d)$]\label{lem:regu-u}
Let Assumption {\bf (A1) Case 2} and {\bf (A2)} hold.
For any $z\in \mathbb{R}^d$, let $u\in \mathcal{C}_{b}^{2,\alpha}(\mathbb{R}^d)$ be a classical solution of Eq.  \eqref{e:pde-b1} in $B_1(z)$, there exist positive constants $C=C(\lambda,d, \alpha)$ and $C'=C'(d,\alpha)$ such that, for $x, y\in B_{1/8}(z)$
\begin{eqnarray*}
|\nabla^2 u(x)-\nabla^2 u(y)| &\leq& C (  |b_1|_{0,\alpha;B_1(z)} +|u|_{0;B_1(z)} )|x-y|^\alpha, \\
\|u\|_{\infty}+\|\nabla u\|_{\infty} &\leq& C' \lambda^{-1}.  %\marginnote{\lihu give the definition of $|b_1|_{0,\alpha;B_1(z)} +|u|_{0;B_1(z)}$}
\end{eqnarray*}
\end{lemma}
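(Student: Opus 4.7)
The plan is to combine the classical interior Schauder estimate for elliptic PDEs in non-divergence form (Gilbarg–Trudinger, Theorem 6.2) with a probabilistic representation of $u$ for the global sup-norm bounds, following the strategy of Flandoli–Gubinelli–Priola \cite{flandoli2010}.

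First, I would rewrite Eq. \eqref{e:pde-b1} as
\begin{align*}
\tfrac{1}{2}\langle \sigma(x)\sigma^\prime(x), \nabla^2 u(x)\rangle_{\rm HS} + \langle b_1(x), \nabla u(x)\rangle - \lambda u(x) \ =\ -b_1(x),
\end{align*}
which is a uniformly elliptic linear equation in non-divergence form with $\mathcal{C}^\alpha$ data: the principal coefficient $\sigma\sigma^\prime$ is Lipschitz (hence $\mathcal{C}^\alpha$) by {\bf (A2)}; the first-order coefficient $b_1$ and the source $-b_1$ belong to $\mathcal{C}_b^\alpha(\R^d)$ by {\bf (A1) Case 2}; and the zeroth-order coefficient is the constant $-\lambda$. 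Uniform ellipticity is supplied by {\bf (A2)}. The interior Schauder estimate applied on the concentric balls $B_{1/8}(z) \subset B_1(z)$ then yields
\begin{align*}
|u|_{2,\alpha;B_{1/8}(z)} \ \leq \ C\bigl(|u|_{0;B_1(z)} + |b_1|_{0,\alpha;B_1(z)}\bigr),
\end{align*}
with $C = C(\lambda, d, \alpha)$ (the fixed global $\mathcal{C}^\alpha$ norms of $\sigma\sigma^\prime$ and $b_1$, as well as the ellipticity constant $\lambda_\sigma^{-1}$, are absorbed). Extracting the semi-norm $[\nabla^2 u]_{\alpha;B_{1/8}(z)}$ yields the first estimate of the lemma.

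Second, for the global sup-norm bounds I would use a Feynman–Kac representation. Under the present assumptions the SDE $\mathrm{d}Y_t = b_1(Y_t)\,\mathrm{d}t + \sigma(Y_t)\,\mathrm{d}W_t$, $Y_0 = x$, admits a unique strong solution. Applying It\^o's formula to $e^{-\lambda t}u(Y_t)$ and invoking Eq. \eqref{e:pde-b1} gives
\begin{align*}
u(x) \ = \ \mathbb{E}_x\Bigl[\int_0^\infty e^{-\lambda t} b_1(Y_t)\,\mathrm{d}t\Bigr],
\end{align*}
from which $\|u\|_\infty \leq \|b_1\|_\infty / \lambda$ is immediate. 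For $\|\nabla u\|_\infty$ I would differentiate this representation via the Bismut–Elworthy–Li formula, which expresses $\nabla(P_t b_1)$ as an expectation not involving any derivative of $b_1$. Splitting the time integral at $t = 1/\lambda$ and exploiting both the H\"older regularity of $b_1$ and the smoothing of the underlying semigroup at small $t$ yield the claimed decay in $\lambda$, exactly as in \cite{flandoli2010}.

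The main technical obstacle is the sharp rate of decay of $\|\nabla u\|_\infty$ in $\lambda$: a direct application of Bismut–Elworthy–Li to the bounded function $b_1$ produces only the suboptimal rate $\lambda^{-1/2}$, and upgrading this to the decay announced in the lemma requires the extra H\"older regularity of $b_1$ together with a careful splitting of the resolvent integral. All remaining steps are routine: the H\"older estimate on $\nabla^2 u$ is essentially a citation of Schauder, and the Feynman–Kac identity for $u$ is a standard It\^o computation using the boundedness of $u$ and $\nabla u$ justified by the previous bullet.
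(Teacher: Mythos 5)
Your overall strategy---interior Schauder estimates for the local H\"older seminorm of $\nabla^2 u$ together with a probabilistic (Feynman--Kac) representation for the global supremum bounds---is in the spirit of the paper's citation of Flandoli--Gubinelli--Priola, and both the Schauder step and the bound $\|u\|_\infty \le \|b_1\|_\infty/\lambda$ are sound. However, there is a genuine gap in the $\|\nabla u\|_\infty$ step: the claimed decay $\|\nabla u\|_\infty \le C'\lambda^{-1}$ cannot be produced by your argument, and in fact cannot hold at all when $b_1$ is merely in $\mathcal{C}^\alpha_b$ with $\alpha<1$. Writing $(\lambda-\mathcal{L}_2^\sigma)u = b_1 + b_1\cdot\nabla u =: g$ and using the resolvent smoothing $\|\nabla(\lambda-\mathcal{L}_2^\sigma)^{-1}g\|_\infty \lesssim \lambda^{-(1+\beta)/2}[g]_\beta+\lambda^{-1/2}\|g\|_\infty$, closing the fixed point yields at best $\|\nabla u\|_\infty \lesssim \lambda^{-(1+\alpha)/2}$. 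This exponent is sharp: in one dimension with $\sigma\equiv 1$ and $b_1(x)=\sin(\sqrt{2\lambda}\,x)$, the leading-order solution is $u\approx(2\lambda)^{-1}\sin(\sqrt{2\lambda}\,x)$, giving $\|\nabla u\|_\infty\sim\lambda^{-1/2}\|b_1\|_\infty\sim\lambda^{-(1+\alpha)/2}[b_1]_\alpha$. So your assertion that ``the extra H\"older regularity \dots together with a careful splitting \dots yields the claimed decay in $\lambda$, exactly as in \cite{flandoli2010}'' is unsupported; Flandoli--Gubinelli--Priola themselves obtain only $\sqrt{\lambda}\,\|\nabla u_\lambda\|_\infty\le C\|b_1\|_{\mathcal{C}^\alpha}$. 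The downstream uses (Corollary \ref{cor:hatb}, Lemma \ref{lem:tlb}) only need $\|u\|_\infty+\|\nabla u\|_\infty\to 0$ as $\lambda\to\infty$, so any positive power would suffice; but a proof should flag the discrepancy rather than assert an unattainable rate.

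A secondary concern is the invocation of Bismut--Elworthy--Li for $\dif Y_t = b_1(Y_t)\,\dif t+\sigma(Y_t)\,\dif W_t$ with $b_1$ only H\"older continuous: the classical BEL formula differentiates the flow $x\mapsto Y^x_t$, and this differentiability is not available a priori without first regularizing the drift by the very Zvonkin transform that the present lemma is being used to construct. Flandoli--Gubinelli--Priola's Section 2 is purely analytic: they treat $b_1\cdot\nabla u$ as a perturbation of the drift-free generator and run a contraction on $\mathcal{C}^{2,\alpha}_b$, using only heat-kernel smoothing estimates for $\lambda-\mathcal{L}_2^\sigma$. Arguing that way both removes the circularity and cleanly delivers the (correct) rate $\lambda^{-1/2}$.
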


Combining these two lemmas with Lemma \ref{lem:Y}, we have the following corollary:
\begin{corollary}\label{cor:hatb}

(i) Let Assumption {\bf (A1) Case 1} and {\bf (A2)} hold. For $\lambda>0$ large enough, then $\widehat{b},\widehat{\sigma} \in \mathcal{C}^{\gamma}(\R^d)$ with $\gamma$ given in Lemma \ref{lem:regu-ul};

(ii) Let Assumption {\bf (A1) Case 2} and {\bf (A2)} hold. Then, for sufficiently large $\lambda>0$, $\widehat{b},\widehat{\sigma} \in \mathcal{C}^{1,\alpha}(\R^d)$.
\end{corollary}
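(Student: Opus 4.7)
The plan is to verify that the Zvonkin transform $\Phi = \mathrm{Id} + u$ is a bi-Lipschitz bijection of $\R^d$ of the appropriate regularity class, and then to read off the regularity of $\widehat b$ and $\widehat\sigma$ by combining the chain rule with the algebra and composition rules for H\"{o}lder spaces. Lemmas \ref{lem:regu-ul} and \ref{lem:regu-u} give $\|\nabla u\|_\infty \leq C'\lambda^{-\gamma/2}$ in Case 1 and $\|\nabla u\|_\infty \leq C'\lambda^{-1}$ in Case 2. Taking $\lambda$ large enough so that $\|\nabla u\|_\infty \leq 1/2$, the matrix $\nabla\Phi = \mathrm{I}_d + \nabla u$ is everywhere invertible via the Neumann series with $\|(\nabla\Phi)^{-1}\|_\infty \leq 2$, so $\Phi$ is a bi-Lipschitz bijection of $\R^d$ whose inverse $\Phi^{-1}$ is Lipschitz with constant at most~$2$.

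For part (i), Lemma \ref{lem:regu-ul} gives $u\in\mathcal{C}_b^{1,\gamma}(\R^d)$, so $\nabla\Phi\in\mathcal{C}_b^{\gamma}(\R^d)$. The remaining ingredients are the Lipschitz (hence locally $\gamma$-H\"{o}lder) regularity of $b_2$ and $\sigma$ from (A1) and (A2). The pointwise product of a bounded $\mathcal{C}^\gamma$ function with a locally Lipschitz one is locally $\mathcal{C}^\gamma$, so $\nabla\Phi\cdot b_2$ and $\nabla\Phi\cdot\sigma$ lie in $\mathcal{C}^{\gamma}$, and then so does $\lambda u + \nabla\Phi\cdot b_2$. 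Pre-composing with the Lipschitz map $\Phi^{-1}$ preserves $\gamma$-H\"{o}lder regularity, giving $\widehat b,\widehat\sigma\in \mathcal{C}^\gamma(\R^d)$.

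For part (ii), Lemma \ref{lem:regu-u} gives $u\in\mathcal{C}^{2,\alpha}$ on every ball of radius $1/8$ with constants uniform in the base point, hence $\nabla\Phi\in \mathcal{C}^{1,\alpha}$ locally and uniformly. Applying the inverse function theorem together with the uniform lower bound $\nabla\Phi\geq \tfrac{1}{2}\mathrm{I}_d$ promotes this to $\Phi^{-1}\in \mathcal{C}^{2,\alpha}$ with bounds independent of the base point. Plugging in the regularity of $u$, $\nabla\Phi$, $b_2$, and $\sigma$, the algebra of $\mathcal{C}^{1,\alpha}$ functions gives $\lambda u + \nabla\Phi\cdot b_2$ and $\nabla\Phi\cdot\sigma$ in $\mathcal{C}^{1,\alpha}$, and composition with $\Phi^{-1}\in \mathcal{C}^{2,\alpha}$ preserves this class.

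The main obstacle I anticipate is propagating the local Schauder estimate of Lemma \ref{lem:regu-u} to a \emph{uniform} global statement for $\Phi^{-1}$: the lemma controls $\nabla^2 u$ only on fixed-radius balls, and one must verify that the constants produced by the inverse function theorem do not degenerate as the base point moves out to infinity. This is precisely where the uniform ellipticity $\nabla\Phi\geq \tfrac{1}{2}\mathrm{I}_d$ combined with a standard covering argument plays its role.
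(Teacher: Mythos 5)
Your proposal is correct and follows essentially the same route as the paper: take $\lambda$ large so that $\|\nabla u\|_\infty\le 1/2$, making $\Phi$ a bi-Lipschitz $\mathcal C^1$-diffeomorphism with $\tfrac12\le\|\nabla\Phi\|_\infty,\|\nabla\Phi^{-1}\|_\infty\le 2$, and then read off the regularity of $\widehat b,\widehat\sigma$ from the explicit formulae \eqref{e:bsigma} using the regularity of $u$ from Lemmas \ref{lem:regu-ul} and \ref{lem:regu-u}. The only difference is that you unpack the algebra/composition rules and the local-to-global covering step that the paper compresses into ``it is clear from the expressions''; in particular the obstacle you flag in the last paragraph (uniformity of the local Schauder constants over base points) is real but is resolved exactly as you say, since $b_1\in\mathcal C_b^\alpha$ and $\|u\|_\infty$ bounded make the right-hand side of the estimate in Lemma \ref{lem:regu-u} uniform in $z$.
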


\begin{proof}
It follows from Lemmas \ref{lem:regu-ul} and \ref{lem:regu-u} that for $\lambda$ large enough, the map $x \mapsto \Phi(x)$ forms a $\mathcal{C}^1$-diffeomorphism and moreover,
\begin{eqnarray}\label{e:phi-infty}
\frac 1 2 \ \  \leq \ \ \|\nabla \Phi\|_{\infty},\|\nabla \Phi^{-1}\|_{\infty} \ \ \leq  \ \ 2.
\end{eqnarray}

(i) As $b_1\in \mathbb{L}^\infty(\R^d)\cap \mathbb{L}^1(\R^d)$, it follows from Lemma \ref{lem:regu-ul} that $u\in \mathcal{C}_b^{1,\gamma}(\R^d)$ with $\gamma=1-d/p$, so $\Phi \in \mathcal{C}^{1,\gamma}(\R^d)$. It is clear that $\widehat{b},\widehat{\sigma} \in \mathcal{C}^{\gamma}(\R^d)$ from expressions of $\widehat{b}$ and $\widehat{\sigma}$ in Eq. \eqref{e:bsigma}.

(ii) As $b_1 \in \mathcal{C}_b^{\alpha}(\mathbb{R}^d)$ with  $\alpha\in (0,1)$, it follows from Lemma \ref{lem:regu-u} that $u\in \mathcal{C}_b^{2,\alpha}(\R^d)$, so $\Phi \in \mathcal{C}^{2,\alpha}(\R^d)$. This implies  that $\widehat{b},\widehat{\sigma} \in \mathcal{C}^{1,\alpha}(\R^d)$ by the  expressions of $\widehat{b}$ and $\widehat{\sigma}$ in Eq. \eqref{e:bsigma}.
\end{proof}

\subsection{Ergodicity}
Let $f:\R^d \rightarrow \R$ be a measurable function.
If $Y_0=y$, we denote the process $(Y_t)_{t\geq 0}$ by $(Y_t^y)_{t\geq 0}$, define
\begin{eqnarray*}
\widehat{P}_t f(y) \ \ = \ \ \mathbb{E} f(Y_t^y), \quad \ t\geq 0,
\end{eqnarray*}
as long as $\mathbb{E}|f(Y_t^y)|<\infty$. Similarly, we introduce the notation $(Z_k^z)_{k\in \mathbb{N}_0}$ and define
 \begin{eqnarray*}
\widehat{Q}_k f(z) \ \ = \ \ \mathbb{E} f(Z_k^z), \quad  k\in \mathbb{N}_0,
\end{eqnarray*}
as long as $\mathbb{E}|f(Z_k^z)|<\infty$.

The following two lemmas are about the ergodicity of the process $(Y_t)_{t\geq 0}$ and the Markov chain $(Z_k)_{k\in \mathbb{N}_0}$, whose proofs are standard and not new. For the completeness, we will give the details in Appendix \ref{app:ergodicity}.
\begin{lemma}\label{pro:ergodic-Y}
Let Assumption \ref{assump-1} hold, the process $(Y_t)_{t\geq 0}$ given by SDE  \eqref{e:SDE-1} is exponentially ergodic with a unique invariant measure $\widehat{\mu}$. More precisely,
\begin{eqnarray*}
\sup_{ |f| \leq 1+|\cdot|^2 }  \{  \widehat{P}_t f(y) - \widehat{\mu}(f)  \} &\leq& C (1+|y|^2) e^{-ct},  \forall \ y \in \R^d.
\end{eqnarray*}
This implies
$$d_W(\mathcal L(Y^y_t), \widehat \mu) \le  C (1+|y|^2) e^{-ct},  \forall \ y \in \R^d,$$
where $\mathcal L(Y^y_t)$ is the law of $Y^y_t$. % {\xiaojin In addition, one has  $\widehat{\mu}(V) \leq C$.}
\end{lemma}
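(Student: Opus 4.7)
The plan is to combine a Foster--Lyapunov drift condition for $V(y) = 1 + |y|^2$ with a minorization on sublevel sets of $V$, and then invoke a classical $V$-uniform geometric ergodicity theorem (in the spirit of Meyn--Tweedie, or its continuous-time Down--Meyn--Tweedie refinement). This will directly deliver the weighted-total-variation decay stated in the lemma, and the Wasserstein-1 bound follows immediately because every $h \in \mathrm{Lip}_0(1)$ satisfies $|h(y)| \leq |y| \leq 1 + |y|^2$.

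For the drift condition I apply It\^o's formula to $V(Y_t)$, for which it suffices to establish
$$2 \langle y, \widehat{b}(y) \rangle + \mathrm{tr}\bigl(\widehat{\sigma}(y) \widehat{\sigma}^{\prime}(y)\bigr) \leq - c_1 V(y) + K$$
uniformly in $y \in \R^d$. Setting $x = \Phi^{-1}(y)$ and using the explicit expression $\widehat{b}(y) = \lambda u(x) + (\mathrm{I} + \nabla u(x)) b_2(x)$ from Lemma \ref{lem:Y}, I expand $\langle y, \widehat{b}(y) \rangle = \lambda \langle x + u(x), u(x) \rangle + \langle x + u(x), (\mathrm{I} + \nabla u(x)) b_2(x) \rangle$ and bound each piece using dissipativity \eqref{e:dissi-b2} and linear growth \eqref{e:lingro-b2} of $b_2$, Young's inequality, and the estimates $\|u\|_\infty + \|\nabla u\|_\infty \leq C \lambda^{-\gamma/2}$ (Case 1) or $\leq C \lambda^{-1}$ (Case 2) from Lemmas \ref{lem:regu-ul} and \ref{lem:regu-u}. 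For $\lambda$ large enough all cross terms of the type $\nabla u \cdot b_2$ and $\lambda u$ are absorbed into the dissipative $-\theta_1 |x|^2$, yielding $\langle y, \widehat{b}(y) \rangle \leq -c|x|^2 + C$; the equivalence $\bigl||x| - |y|\bigr| \leq \|u\|_\infty$ from \eqref{e:Phi} transfers this to an estimate in $y$. The trace term is uniformly bounded thanks to \eqref{e:sigma-ell} and \eqref{e:phi-infty}.

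For the minorization, I exploit that $\widehat{\sigma} \widehat{\sigma}^{\prime}$ is uniformly elliptic (inherited from \eqref{e:sigma-ell} via \eqref{e:phi-infty}) and that $\widehat{b}, \widehat{\sigma}$ are bounded on compacts and H\"older continuous globally by Corollary \ref{cor:hatb}. Standard parabolic PDE theory (Aronson-type two-sided Gaussian heat-kernel estimates, or a direct Girsanov comparison against a Brownian motion after truncating $\widehat{b}$) shows that the transition kernel $\widehat{P}_t(y, \cdot)$ admits a jointly continuous, strictly positive density for each $t > 0$. Consequently every sublevel set $\{V \leq R\}$ is a small set in the Meyn--Tweedie sense: there exist $\kappa > 0$ and a probability measure $\nu$ with $\widehat{P}_1(y, \cdot) \geq \kappa \, \nu(\cdot)$ uniformly for $y$ with $V(y) \leq R$.

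Combining the drift condition and the small-set minorization, the classical $V$-uniform ergodicity theorem yields the existence and uniqueness of $\widehat{\mu}$ together with
$$\sup_{|f| \leq V}\bigl| \widehat{P}_t f(y) - \widehat{\mu}(f)\bigr| \leq C V(y) e^{-ct},$$
from which the Wasserstein-1 estimate is immediate. The main obstacle I anticipate is the careful bookkeeping in the Lyapunov step: the cross term $\lambda \langle y, u(x) \rangle$ looks threatening since it grows linearly in $y$ with a $\lambda$-dependent coefficient, but the $\lambda^{-\gamma/2}$ (resp.\ $\lambda^{-1}$) decay of $\|u\|_\infty$ in Lemmas \ref{lem:regu-ul} and \ref{lem:regu-u}, combined with Young's inequality, keeps it bounded by $\tfrac{\theta_1}{4}|x|^2$ plus an additive constant once $\lambda$ is taken large enough, which is exactly the regime required by Corollary \ref{cor:hatb}.
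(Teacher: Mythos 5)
Your proposal follows essentially the same route as the paper: the same Lyapunov function $V(y)=1+|y|^2$, the same Foster--Lyapunov drift inequality obtained from the dissipativity and linear growth of $\widehat{b}$ together with the uniform ellipticity of $\widehat{\sigma}\widehat{\sigma}^{\prime}$ (the paper isolates these facts in Lemma \ref{lem:tlb}, whose proof matches your expansion of $\langle y,\widehat{b}(y)\rangle$ term by term with $\|u\|_\infty,\|\nabla u\|_\infty$ made small by taking $\lambda$ large), and the same appeal to a Meyn--Tweedie-type $V$-uniform geometric ergodicity theorem followed by the observation that $|h|\leq CV$ for $h\in\mathrm{Lip}_0(1)$. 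The only cosmetic difference is that you spell out the small-set/minorization verification (via heat-kernel positivity), whereas the paper simply cites \cite[Theorem 6.1]{Meyn1993stability} for the continuous-time process and only carries out an explicit minorization argument for the discrete chain in Lemma \ref{pro:ergodic-MC}.
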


\begin{lemma}\label{pro:ergodic-MC}
Let Assumption \ref{assump-1} hold, the Markov chain $(Z_k)_{k\in \mathbb{N}_0}$ given in Eq. \eqref{e:MC-R} is exponential ergodic with a unique invariant  measure $\widehat{\mu}_{\eta}$. More precisely, {there exist some positive constants $c$ and $C$ both independent of $k$ and $\eta$ such that}
\begin{eqnarray*}
\sup_{ |f| \leq 1+|\cdot|^2 }  \{  \widehat{Q}_k f(z) - \widehat{\mu}_{\eta}(f)  \} &\leq& C \eta^{-1} e^{-ck\eta}, \ \ \forall \ z \in \R^d.
\end{eqnarray*}
This implies
$$d_W(\mathcal L(Z^z_k), \widehat \mu_{\eta}) \leq  C \eta^{-1} e^{-ck\eta}, \ \ \forall \ z \in \R^d,$$
where $\mathcal L(Z^z_k)$ is the law of
$Z^z_k$. Furthermore, for any integers $k\geq 1$, {one has $\widehat{\mu}_{\eta}(|\cdot|^k) \leq C$ with some positive constant $C$ independent of $\eta$}.
\end{lemma}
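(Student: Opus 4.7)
The plan is to combine a Foster--Lyapunov drift inequality with a minorization on sublevel sets of the Lyapunov function, and then invoke a quantitative Harris-type theorem (e.g.\ Hairer--Mattingly or Meyn--Tweedie) to deduce geometric ergodicity in the weighted total variation norm $\|\cdot\|_{1+|\cdot|^2}$. Since Lipschitz functions $h$ with $h(0)=0$ satisfy $|h(z)|\leq |z|\leq \tfrac12(1+|z|^2)$, the weighted TV bound transfers directly to the Wasserstein-$1$ bound via \eqref{e:dW}.

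\emph{Step 1: Lyapunov condition.} I would first upgrade the dissipativity of $b_2$ to a dissipativity for $\widehat b$. Writing $x=\Phi^{-1}(y)$, recall $\widehat b(y)=\lambda u(x)+(\mathrm I_d+\nabla u(x))b_2(x)$ and $y=x+u(x)$. By Lemma \ref{lem:regu-ul} (resp.\ Lemma \ref{lem:regu-u}) we can make $\|u\|_\infty$ and $\|\nabla u\|_\infty$ arbitrarily small by choosing $\lambda$ large. Combining this with \eqref{e:dissi-b2}--\eqref{e:lingro-b2}, a direct expansion of $\langle y,\widehat b(y)\rangle$ yields, for large enough $\lambda$,
\[
\langle y,\widehat b(y)\rangle \leq -\theta_1' |y|^2+\theta_2', \qquad |\widehat b(y)|\leq C(1+|y|),
\]
for some $\theta_1',\theta_2'>0$. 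Since \eqref{e:sigma-ell} and \eqref{e:phi-infty} give $\|\widehat\sigma\|_\infty\leq C$, squaring \eqref{e:MC-R} and taking conditional expectations yields
\[
\mathbb E[|Z_{k+1}|^2\mid Z_k=z] \leq |z|^2+2\eta\langle z,\widehat b(z)\rangle+\eta^2|\widehat b(z)|^2+\eta\,\mathrm{tr}(\widehat\sigma\widehat\sigma^\prime(z)) \leq (1-c\eta)|z|^2+C\eta
\]
for all $\eta$ smaller than some $\eta_0$ depending only on the constants. Iterating gives $\mathbb E|Z_k^z|^2\leq e^{-ck\eta}|z|^2+C/c$, which both yields the moment bound $\widehat\mu_\eta(|\cdot|^2)\leq C$ uniformly in $\eta$ and supplies the drift condition $\widehat Q V\leq (1-c\eta)V+C\eta$ with $V(z)=1+|z|^2$. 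The same argument with $V_k(z)=1+|z|^{2k}$ gives all higher moments.

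\emph{Step 2: Minorization.} Conditionally on $Z_k=z$, the random variable $Z_{k+1}$ is Gaussian with mean $z+\eta\widehat b(z)$ and covariance $\eta\,\widehat\sigma(z)\widehat\sigma^\prime(z)$. By \eqref{e:sigma-ell} and \eqref{e:phi-infty}, the covariance satisfies $\lambda_\sigma'\,\mathrm I_d\leq \widehat\sigma\widehat\sigma^\prime\leq (\lambda_\sigma')^{-1}\mathrm I_d$ for some $\lambda_\sigma'>0$, so the transition density is bounded below, uniformly in $z$ on any compact set, by a strictly positive multiple of an explicit Gaussian. This gives the minorization condition $\widehat Q(z,\cdot)\geq \alpha\,\nu(\cdot)$ on each sublevel set $\{V\leq R\}$, where $\alpha$ depends on $R$ and $\eta$ but in a controlled way.

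\emph{Step 3: Conclusion.} Applying Hairer--Mattingly's quantitative Harris theorem to $(Z_k)$ with the weight $V=1+|\cdot|^2$ produces a contraction rate $\rho=1-c\eta\in(0,1)$ for the $V$-weighted total-variation semi-norm, so
\[
\sup_{|f|\leq 1+|\cdot|^2}|\widehat Q_k f(z)-\widehat\mu_\eta(f)|\leq C\eta^{-1}(1+|z|^2)e^{-ck\eta},
\]
the factor $\eta^{-1}$ being absorbed from tracking how the one-step Harris constants depend on $\eta$ in the minorization. Existence and uniqueness of $\widehat\mu_\eta$ follow from the same construction. The main obstacle is to check that the Harris constants (in both the drift and the small-set minorization) can be chosen so that the resulting geometric rate $\rho$ scales as $1-c\eta$ with $c$ independent of $\eta$; this requires a careful bookkeeping of how the Gaussian lower bound on compacts degrades as $\eta\to 0$, and motivates the appearance of the prefactor $\eta^{-1}$ in the final estimate.
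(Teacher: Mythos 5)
Your proposal follows essentially the same route as the paper: a Foster--Lyapunov drift inequality $\widehat Q_1 V \leq (1-c\eta)V + C\eta$ with $V(z)=1+|z|^2$ (and its powers for higher moments), a Gaussian minorization of the one-step kernel on a compact sublevel set, and a quantitative Harris-type ergodicity theorem. The only real difference is the packaging of the final step: the paper introduces the time-dependent weight $V_n(x)=e^{\frac14\widehat\theta_1 n\eta}V(x)$ and rate function $r(n)=\frac{\widehat\theta_1}{4}\eta\, e^{\frac14\widehat\theta_1 n\eta}$ so that the one-step drift fits exactly the hypotheses of Tuominen--Tweedie \cite[Theorem 2.1]{Tuominen1994Subgeometric} (or Douc et al.\ \cite[Theorem 1.1]{Douc2004practical}), whereas you invoke the Hairer--Mattingly Harris theorem directly; these are equivalent in substance. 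You correctly flag the delicate point: the one-step Gaussian minorization constant on a fixed compact set degrades like $e^{-c/\eta}$ as $\eta\to 0$ (this is visible in the paper's own lower bound $c=\inf_{x\in\mathscr C}(2\Lambda_2/\Lambda_1)^{-d/2}\exp\bigl(-\tfrac{1}{2\Lambda_1\eta}[4|x|^2+8\eta^2\widehat\theta_3^2(1+|x|^2)]\bigr)$), so a naive one-step Harris coupling does not immediately yield a contraction factor $1-c\eta$; extracting the claimed $C\eta^{-1}e^{-ck\eta}$ bound requires a more careful reading of how the drift and minorization constants combine (for instance by aggregating $O(1/\eta)$ steps into unit-time blocks), and the paper is essentially as terse on this point as your proposal is.
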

By the relationship of ergodicity between processes $(X_t)_{t\geq 0}$ and $(Y_t)_{t\geq 0}$ (see \cite[Proposition 2.8]{xie2017ergodicity}), we have
\begin{lemma}\label{pro:ergodic-X}
Let Assumption \ref{assump-1} hold, the solution process $(X_t)_{t\geq 0}$ given by  SDE \eqref{e:SDE} is ergodic with an ergodic measure $\mu$. In addition, one has
\begin{eqnarray}\label{e:mus-1}
\mu & = &   \widehat{\mu} \circ \Phi.
\end{eqnarray}
\end{lemma}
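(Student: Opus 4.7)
The strategy is a transfer-of-structure argument: since Lemma \ref{lem:Y} gives $Y_t=\Phi(X_t)$ and Corollary \ref{cor:hatb} (together with \eqref{e:phi-infty}) guarantees that $\Phi$ is a global $\mathcal{C}^{1}$-diffeomorphism with $\Phi,\Phi^{-1}$ both globally Lipschitz, every ergodic statement about $(Y_t)$ transports to $(X_t)$ through the map $\Phi^{-1}$. The natural candidate for $\mu$ is therefore the pushforward of $\widehat\mu$ under $\Phi^{-1}$, which in the notation of the paper is $\widehat\mu\circ\Phi$, defined by $(\widehat\mu\circ\Phi)(A)=\widehat\mu(\Phi(A))$ for Borel $A\subset\mathbb{R}^d$, or equivalently by $\int f\,\mathrm{d}(\widehat\mu\circ\Phi)=\int f\circ\Phi^{-1}\,\mathrm{d}\widehat\mu$ for bounded measurable $f$.

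The main body of the argument is one line of change of variables. Define $\mu:=\widehat\mu\circ\Phi$. For any $h\in{\rm Lip}(1)$ and any starting point $x\in\mathbb{R}^d$, Lemma \ref{lem:Y} gives $X^x_t=\Phi^{-1}(Y^{\Phi(x)}_t)$, hence
\begin{equation*}
\mathbb{E}\,h(X^x_t)-\mu(h)=\widehat P_t(h\circ\Phi^{-1})(\Phi(x))-\widehat\mu(h\circ\Phi^{-1}).
\end{equation*}
By \eqref{e:phi-infty}, $h\circ\Phi^{-1}$ is Lipschitz with constant at most $2$, in particular it satisfies $|h\circ\Phi^{-1}|\leq 1+|\cdot|^2$ up to an additive constant (which is killed by the difference on the right-hand side). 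Applying the Wasserstein-1 estimate of Lemma \ref{pro:ergodic-Y} to the rescaled test function $\tfrac12 h\circ\Phi^{-1}$ then yields
\begin{equation*}
\bigl|\mathbb{E}\,h(X^x_t)-\mu(h)\bigr|\leq 2C(1+|\Phi(x)|^2)e^{-ct}\leq C'(1+|x|^2)e^{-ct},
\end{equation*}
where the last step uses the Lipschitz continuity of $\Phi$. Taking the supremum over $h\in{\rm Lip}(1)$ shows $d_W(\mathcal{L}(X^x_t),\mu)\to 0$, which simultaneously proves that $\mu$ is invariant for $(X_t)$ and that $(X_t)$ converges to it from every initial condition.

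For uniqueness, any invariant probability measure $\mu'$ for $(X_t)$ pushes forward under $\Phi$ to an invariant measure $\mu'\circ\Phi^{-1}$ for $(Y_t)$ (again by Lemma \ref{lem:Y}), which by Lemma \ref{pro:ergodic-Y} must equal $\widehat\mu$; inverting gives $\mu'=\widehat\mu\circ\Phi=\mu$. I do not expect any real obstacle in this proof: everything reduces to the diffeomorphism property of $\Phi$ and the already-established ergodicity of $(Y_t)$. The only point that deserves a brief check is that composition with $\Phi^{-1}$ sends the class of test functions used in Lemma \ref{pro:ergodic-Y} into itself, which is immediate from the two-sided bound on $\nabla\Phi^{-1}$ recorded in \eqref{e:phi-infty}.
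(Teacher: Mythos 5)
Your proof is correct and takes essentially the same route as the paper: the paper's own proof simply invokes the intertwining relation $\widehat P_t\varphi(y)=[P_t(\varphi\circ\Phi)](\Phi^{-1}(y))$ and the conclusion $\mu=\widehat\mu\circ\Phi$ from Xie and Zhang \cite[Proposition 2.8]{xie2017ergodicity}, whereas you unpack that citation into an explicit change-of-variables argument combined with Lemma \ref{pro:ergodic-Y} and the Lipschitz bound \eqref{e:phi-infty}. Your extra uniqueness step (pushing an invariant $\mu'$ forward to an invariant measure for $(Y_t)$ and invoking uniqueness of $\widehat\mu$) is also sound and is implicit in the cited result.
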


\section{Proof of Theorem \ref{thm:main}.} \label{sec:proof-main}
We shall prove the main theorem by introducing a stationary Markov chain associated to the EM scheme, in which the Poisson equation will play an important role. This trick is very similar to that in \cite{fang2019Multivariate}. 
\subsection{Poisson equation}
Let $\widehat{\mathcal{A}}$ be the generator of the process $(Y_t)_{t\geq 0}$ in SDE \eqref{e:SDE-1}, that is,
\begin{eqnarray}\label{e:hatA}
\widehat{\mathcal{A}} f(x) &=& \langle  \widehat{b}(x), \nabla f(x)  \rangle + \frac{1}{2}  \langle \widehat{a}(x), \nabla^2 f(x) \rangle_{\rm HS},
\quad f\in \mathcal{D}(\widehat{\mathcal{A}}),
\end{eqnarray}
where $\widehat{a}(x)=\widehat{\sigma}(x) \widehat{\sigma}^{\prime}(x)$ and $\mathcal{D}(\widehat{\mathcal{A}})$ is the domain of generator $\widehat{\mathcal{A}}$. For any $h\in {\rm Lip}(1)$, we consider the following Poisson equation:
\begin{eqnarray}\label{e:steinequ}
\widehat{\mathcal{A}} f(x)  &=& h(x) - \widehat{\mu}(h).
\end{eqnarray}

Regularities of the solution $f$ in Eq. \eqref{e:steinequ} play crucial roles in proving our main results. Before proving those regularities, we first give the representation for the solution $f$ to Eq. \eqref{e:steinequ} as below.
\begin{lemma}\label{lem:stein-f}
Let Assumption \ref{assump-1} hold, and  $h\in {\rm Lip}(1)$. Then the solution to Eq.  \eqref{e:steinequ} is given by
\begin{eqnarray}\label{e:fh-1}
f(x) &=& -\int_0^{\infty}  \widehat{P}_t [ h(x) - \widehat{\mu}(h) ] \dif t,
\end{eqnarray}
where $\widehat{P}_t$ is the semigroup of the process $(Y_t)_{t\geq 0}$. Furthermore, there exists some positive constant $C$ such that
\begin{eqnarray*}
|f(x)| &\leq& C(1+|x|^2).
\end{eqnarray*}
\end{lemma}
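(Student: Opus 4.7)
The plan is to establish two things: (i) the integral defining $f$ converges absolutely with the claimed quadratic growth bound, and (ii) $f$ indeed solves the Poisson equation \eqref{e:steinequ}.

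For (i), since $h\in{\rm Lip}(1)$ one has $|h(x)|\le|h(0)|+|x|$, and the dissipativity in Assumption \ref{assump-1} combined with standard moment estimates on $Y_t$ yields $\widehat{\mu}(|\cdot|)<\infty$. Hence $g:=h-\widehat{\mu}(h)$ satisfies $|g|\le C(1+|\cdot|^2)$ and $\widehat{\mu}(g)=0$. Applying Lemma \ref{pro:ergodic-Y} to the test class $|\cdot|\le 1+|\cdot|^2$ gives
\begin{equation*}
|\widehat{P}_t g(x)| \ \le\ C(1+|x|^2)e^{-ct}, \quad t\ge 0, \ x\in\R^d.
\end{equation*}
Integrating in $t$ yields the convergence of \eqref{e:fh-1} together with the bound $|f(x)|\le (C/c)(1+|x|^2)$.

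For (ii), the semigroup property and the substitution $r=t+s$ produce
\begin{equation*}
\widehat{P}_s f(x)-f(x) \ =\ -\int_0^{\infty}\widehat{P}_{t+s}g(x)\,\dif t+\int_0^{\infty}\widehat{P}_t g(x)\,\dif t \ =\ \int_0^s\widehat{P}_t g(x)\,\dif t.
\end{equation*}
Dividing by $s$ and sending $s\to 0$, the right-hand side converges to $g(x)=h(x)-\widehat{\mu}(h)$ by continuity of $t\mapsto\widehat{P}_t g(x)$ at $t=0$ (which holds by the Feller property of $(Y_t)$ and continuity of $g$), while the left-hand side is, by definition, the action of the infinitesimal generator of $(Y_t)$ on $f$. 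This establishes \eqref{e:steinequ} in the semigroup sense.

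The main obstacle is identifying the action of the infinitesimal generator with the pointwise second-order differential operator $\widehat{\mathcal{A}}$ displayed in \eqref{e:hatA}, which requires showing $f\in\mathcal{C}^2(\R^d)$. This regularity ultimately rests on the H\"older continuity of $\widehat{b}$ and $\widehat{\sigma}$ from Corollary \ref{cor:hatb} together with Schauder-type interior estimates for Poisson equations with H\"older coefficients; the detailed treatment is naturally deferred to the separate regularity propositions in Sections \ref{sec:Calpha} and \ref{sec:Jf-Md2}. For the present lemma it suffices to exhibit the explicit representation \eqref{e:fh-1} as the solution in the semigroup sense and to record the quadratic growth bound, both of which are direct consequences of the exponential ergodicity already at hand.
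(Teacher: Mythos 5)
Your proposal is correct and takes essentially the same route as the paper: the quadratic growth bound for $f$ is obtained exactly as you describe by applying Lemma \ref{pro:ergodic-Y} to $g = h - \widehat{\mu}(h)$ and integrating the exponential decay in $t$, while the verification that $f$ satisfies the Poisson equation is handled in the paper by citing Fang et al.\ \cite[Proposition 6.1]{fang2019Multivariate} rather than spelling out the semigroup computation you give. Your version fills in that referenced step (the change of variables $\widehat{P}_s f - f = \int_0^s \widehat{P}_t g\,\dif t$ and the $s\to 0$ limit) and correctly flags that identifying the abstract generator with the second-order differential operator $\widehat{\mathcal{A}}$ requires $\mathcal{C}^2$ regularity of $f$, which is indeed deferred to Propositions \ref{pro:regu1} and \ref{pro:regu2}.
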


\begin{proof}%[Proof of Lemma \ref{lem:stein-f}.]
Since $h \in {\rm Lip}(1)$, we know $|h(x)|\leq C(1+|x|^2)$ for some positive constant $C$. It follows from Lemma \ref{pro:ergodic-Y} that
\begin{eqnarray*}
\int_{0}^{\infty} \widehat{P}_t[h(x)-\widehat{\mu}(h)] \dif t
\end{eqnarray*}
is well defined and
\begin{eqnarray*}
|f(x)| &\leq& \int_{0}^{\infty} |\widehat{P}_t[h(x)-\widehat{\mu}(h)] | \dif t
\ \ \leq \ \ \int_{0}^{\infty} C(1+|x|^2) e^{-ct}\dif t
\ \ \leq \ \  C(1+|x|^2).
\end{eqnarray*}
The expression for $f$ in Eq.  \eqref{e:fh-1} can be proved similarly as in Fang et al. \cite[Proposition 6.1]{fang2019Multivariate}. We omit the details here.
\end{proof}

\subsection{Proof of Theorem \ref{thm:main} (i)}\
In order to prove Theorem \ref{thm:main} (i), we  need the following proposition whose proof is given  in Section \ref{sec:Calpha}.
\begin{proposition}[Regularities, $b_1\in \mathbb{L}^{\infty}(\mathbb{R}^d) \cap \mathbb{L}^{1}(\mathbb{R}^d) $]\label{pro:regu1}
Let $f$ be the solution to Eq. \eqref{e:steinequ} under Assumption {\bf (A1) Case 1} and {\bf (A2)}. Then, for any $p>d$, there exists some positive constant $C=C(d,p)$ such that
\begin{eqnarray*}
|\nabla f(x)| &\leq& C(1+|x|^{3}), \\
|\nabla^2 f(x)| &\leq& C (1+|x|^{4}), \\
\sup_{y:|y-x|\leq 1/8} \frac{|\nabla^2 f(x)-\nabla^2 f(y)|}{ |x-y|^{\gamma}} &\leq& C(1+|x|^{4+\gamma}),
\end{eqnarray*}
where $\gamma=1-\frac{d}{p}$ is as in Lemma \ref{lem:regu-ul}.
\end{proposition}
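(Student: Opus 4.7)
The plan is to derive the three pointwise bounds by applying the classical interior Schauder estimate of Gilbarg and Trudinger \cite[Theorem 6.2]{gilbarg1977schauder} locally to the Poisson equation $\widehat{\mathcal{A}}f = h-\widehat{\mu}(h)$ on unit balls $B_1(x)$ centred at each $x\in\R^d$. By Corollary \ref{cor:hatb}, the coefficients satisfy $\widehat{b},\widehat{\sigma}\in\mathcal{C}^{\gamma}(\R^d)$; moreover $\widehat{\sigma}$ (and hence $\widehat{a}=\widehat{\sigma}\widehat{\sigma}'$) is bounded, $\widehat{a}$ remains uniformly elliptic, while $\widehat{b}$ inherits at most the linear growth of $b_2$. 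The right-hand side $h-\widehat{\mu}(h)$ is Lipschitz, so it has linear growth and a uniformly bounded local $\gamma$-H\"{o}lder seminorm on unit balls; moreover Lemma \ref{lem:stein-f} already supplies the a priori bound $|f(y)|\leq C(1+|y|^2)$, which serves as the seed for bootstrapping.

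For each fixed $x\in\R^d$ I would then invoke the interior Schauder estimate on the concentric pair $B_{1/2}(x)\subset B_1(x)$ to obtain, schematically,
\begin{equation*}
|f|_{2,\gamma;B_{1/2}(x)} \,\leq\, C_x\,\bigl(|f|_{0;B_1(x)}+|h-\widehat{\mu}(h)|_{0,\gamma;B_1(x)}\bigr),
\end{equation*}
where $C_x$ depends on the uniform ellipticity of $\widehat{a}$ and on the local H\"{o}lder norms $|\widehat{b}|_{0,\gamma;B_1(x)}$ and $|\widehat{a}|_{0,\gamma;B_1(x)}$. One checks directly from the formulas \eqref{e:bsigma} that $|\widehat{b}|_{0,\gamma;B_1(x)}\leq C(1+|x|)$, $|\widehat{a}|_{0,\gamma;B_1(x)}\leq C$, and $|h-\widehat{\mu}(h)|_{0,\gamma;B_1(x)}\leq C(1+|x|)$, while Lemma \ref{lem:stein-f} yields $|f|_{0;B_1(x)}\leq C(1+|x|^2)$. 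The first two pointwise bounds at $x$, and the local H\"{o}lder bound over $B_{1/8}(x)\subset B_{1/2}(x)$, then fall out of this single estimate, with the exponents $3$, $4$ and $4+\gamma$ arising from the polynomial dependence of $C_x$ on $(1+|x|)$ as one picks up additional factors at successive orders of derivation in the Schauder chain.

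The main difficulty, and precisely the place where the extension trick of Gurvich \cite{gurvich2014diffusion} enters, is that $\widehat{b}$ is unbounded on $\R^d$, so the standard statement of the interior Schauder estimate does not directly produce a constant $C_x$ that is polynomial in $|x|$. To remedy this I would smoothly truncate $\widehat{b}$ outside $B_1(x)$ to a modified drift $\widetilde{b}^{(x)}$ that is globally bounded by $C(1+|x|)$ and still $\gamma$-H\"{o}lder with seminorm $\leq C(1+|x|)$; since $\widetilde{b}^{(x)}\equiv\widehat{b}$ on $B_1(x)$, the function $f$ continues to satisfy the Poisson equation driven by the truncated generator on $B_1(x)$, and the Schauder estimate for the modified problem now involves only \emph{global} coefficient norms, which are polynomial in $|x|$. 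Verifying that this truncation is compatible with the interior Schauder machinery and carefully tracking the polynomial exponents so as to recover exactly $3$, $4$ and $4+\gamma$ is expected to be the most delicate bookkeeping step of the argument.
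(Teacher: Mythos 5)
There is a genuine gap, and it lies exactly where you flagged the ``delicate bookkeeping'': your truncation of $\widehat{b}$ outside $B_1(x)$ does not resolve the issue it is meant to address. The interior Schauder estimate \eqref{e:ustar} is \emph{local}: the constant $C=C(d,\alpha,\kappa,K)$ in Lemma~\ref{lem:sch-est} depends only on the coefficient bound $K\geq |\widehat{b}|^{(1)}_{0,\gamma;\mathcal{D}}$ over the domain $\mathcal{D}$ itself. With $\mathcal{D}=B_1(x)$ the weight $d_y$ in $|\widehat{b}|^{(1)}_{0,\gamma;B_1(x)}$ is of order $1$ near the centre, so $K$ is of order $1+|x|$ whether or not you modify $\widehat{b}$ outside the ball. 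The dependence of the Schauder constant on $K$ is not stated in Gilbarg--Trudinger and, tracing their absorption argument, it enters through interpolation inequalities with an exponent in $\alpha$; it is not simply linear and you cannot read off the exponents $3$, $4$, $4+\gamma$ from it. Worse, a \emph{single} Schauder inequality on $B_1(x)$ with a constant $(1+|x|)^\nu$ would give the same polynomial factor for $|\nabla f|$, $|\nabla^2 f|$ and the H\"older seminorm of $\nabla^2 f$; it cannot produce the strictly increasing exponents $3$, $4$, $4+\gamma$ that the proposition asserts.

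The idea you are missing is to run the Schauder estimate on balls of \emph{shrinking radius}. The paper sets $\mathcal{B}_x=B_{r(x)}(x)$ with $r(x)=\tfrac{1}{2(1+|x|)}$. On such a ball, the weight $d_y\leq r(x)$ exactly cancels the linear growth of $\widehat{b}$, so $|\widehat{b}|^{(1)}_{0,\gamma;\mathcal{B}_x}\leq C(\lambda)$ is uniform in $x$. The Schauder constant $C$ in \eqref{e:ustar} is therefore a universal constant, and all of the polynomial growth in $|x|$ comes transparently from \emph{unweighting} the left-hand side: from $|f|^{*}_{2,\gamma;\mathcal{B}_x}$ one extracts $|\nabla^k f|_{0;\mathcal{B}_x}\leq [f]^*_{k;\mathcal{B}_x}\,d_x^{-k}\leq C\,(1+|x|)^k\,[f]^*_{k;\mathcal{B}_x}$, and similarly $[f]_{2,\gamma;\mathcal{B}_x}\leq C\,(1+|x|)^{2+\gamma}\,[f]^*_{2,\gamma;\mathcal{B}_x}$, which, combined with $|f|_{0;\mathcal{B}_x}+|h-\widehat{\mu}(h)|^{(2)}_{0,\gamma;\mathcal{B}_x}\leq C(1+|x|^2)$ (Lemma~\ref{lem:stein-f} and $h\in{\rm Lip}(1)$), gives precisely $2+1=3$, $2+2=4$, $2+(2+\gamma)=4+\gamma$. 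A covering argument over $\bigcup_{y\in B_{r_0}(x)}\mathcal{B}_y$ then upgrades the H\"older seminorm from the small ball $\mathcal{B}_x$ to $B_{1/8}(x)$. This shrinking-ball device is the ``extension trick'' of Gurvich that the paper refers to; your reading of it as a smooth truncation is off the mark, and without it the polynomial bookkeeping does not close.
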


\begin{proof}[Proof of Theorem \ref{thm:main} (i).]
 For large enough $\lambda$ in Eq. \eqref{e:pde-b1}, we know that
\begin{eqnarray*}
\frac{1}{2} \leq \|\nabla \Phi \|_{\infty},\|\nabla \Phi^{-1}\|_{\infty} \leq 2.
\end{eqnarray*}
For $h \in {\rm Lip}(1)$, let $\tl{h} = \Phi (h)$. It is clear that  $\tl{h}$ is also Lipschitz continuous with $ \| \nabla \tl{h} \|_{\infty} \leq 2$.  Therefore, we have the following relationship between
$d_W(\mu ,\widehat{\mu}_{\eta}  \circ \Phi)$
and  $d_W(\widehat{\mu},\widehat{\mu}_{\eta} )$:
\begin{eqnarray*}
d_W(\mu,\widehat{\mu}_{\eta} \circ \Phi)  &=& d_W(\widehat{\mu} \circ \Phi ,\widehat{\mu}_{\eta} \circ \Phi)
\ \ = \ \  \sup_{h\in {\rm Lip(1)} } \{  \widehat{\mu}  \circ \Phi (h) - \widehat{\mu}_{\eta}  \circ \Phi (h) \} \\
&=& \sup_{h\in {\rm Lip(1)} } \{  \widehat{\mu}(\tl{h}) - \widehat{\mu}_{\eta}(\tl{h}) \}
\ \ = \ \ \sup_{h\in {\rm Lip(1)} } \left\{  \| \nabla \tl{h} \|_{\infty}  [\widehat{\mu}(\frac{\tl{h}}{\| \nabla \tl{h} \|_{\infty}}) - \widehat{\mu}_{\eta}(\frac{\tl{h}}{\| \nabla \tl{h} \|_{\infty}})] \right\} \\
&\leq& 2 \sup_{h\in {\rm Lip(1)} } \{  \widehat{\mu}(h) - \widehat{\mu}_{\eta}(h) \} \\
&=& 2d_W(\widehat{\mu}, \widehat{\mu}_{\eta} ).
\end{eqnarray*}

To get the estimate for $d_W(\mu,\widehat{\mu}_{\eta} \circ \Phi)$, it suffices to bound $d_W(\widehat{\mu}, \widehat{\mu}_{\eta})$. Assume  the law of the initial value $Z_0$ is the invariant measure $\widehat{\mu}_{\eta}$. Then, $(Z_k)_{k\in \mathbb{N}_0}$ is a stationary Markov chain. Let $\xi_1 \sim \mathcal{N}(0,{\rm I}_d)$, denote
\begin{eqnarray}\label{e:delta}
\delta &=& Z_1 - Z_0
\ \ = \ \ \eta \widehat{b}(Z_0) + \eta^{\frac{1}{2}} \widehat{\sigma}(Z_0) \xi_1.
\end{eqnarray}

Let $f$ be the solution to Eq. \eqref{e:steinequ}. Since $Z_1$ and $Z_0$ have the same distribution, we have
\begin{eqnarray*}
0&=& \mathbb{E}f (Z_1)-\mathbb{E}f(Z_0)  \\
&=& \mathbb{E}[ \langle \delta, \nabla f(Z_0) \rangle  ]
+\mathbb{E} \int_0^1 \int_0^1 r \langle \delta \delta^{\prime}, \nabla^2 f (Z_0+\tl{r} r \delta ) \rangle_{\textrm{HS}} \dif \tl{r} \dif r   \\
&=&\mathbb{E}[  \langle \widehat{b}(Z_0),\nabla f(Z_0) \rangle ]\eta+\mathbb{E} \int_0^1 \int_0^1 r \langle \delta \delta^{\prime}, \nabla^2 f(Z_0+\tl{r} r \delta ) \rangle_{\textrm{HS}} \dif \tl{r} \dif r,
\end{eqnarray*}
where
\begin{eqnarray*}
\mathbb{E}[ \langle \delta, \nabla f(Z_0) \rangle  ]
&=& \mathbb{E}[ \langle \mathbb{E}(\delta|Z_0),\nabla f(Z_0)\rangle  ]
\ = \ \mathbb{E}[  \langle \widehat{b}(Z_0),\nabla f(Z_0) \rangle ]\eta.
\end{eqnarray*}
In addition,
\begin{eqnarray*}
&& \mathbb{E} \int_0^1 \int_0^1 r \langle \delta \delta^{\prime}, \nabla^2 f (Z_0+\tl{r} r \delta ) \rangle_{\textrm{HS}} \dif \tl{r} \dif r  \\
&=& \frac{1}{2}\mathbb{E} \langle \delta \delta^{\prime}, \nabla^2 f(Z_0) \rangle_{\textrm{HS}}+
\mathbb{E} \int_0^1 \int_0^1 r \langle \delta \delta^{\prime}, \nabla^2 f(Z_0+\tl{r} r \delta )-\nabla^2 f(Z_0) \rangle_{\textrm{HS}} \dif \tl{r} \dif r  \\
&=& \frac{\eta}{2} \mathbb{E} [ \langle \sigma(Z_0) \sigma^{\prime}(Z_0), \nabla^2 f(Z_0) \rangle_{\textrm{HS}}  ] +\frac{\eta^2}{2} \mathbb{E} [  \langle \widehat{b}(Z_0) \widehat{b}^{\prime} (Z_0), \nabla^2 f(Z_0) \rangle_{\textrm{HS}} ]   \\
&\ & \ \ \ \ \ \ \  +\mathbb{E} \int_0^1 \int_0^1 r \langle \delta \delta^{\prime}, \nabla^2 f (Z_0+\tl{r} r \delta )-\nabla^2 f(Z_0) \rangle_{\textrm{HS}} \dif \tl{r} \dif r.
\end{eqnarray*}

Collecting the terms above, we obtain
\begin{eqnarray*}
\mathbb{E}[\widehat{\mathcal{A}}f(Z_0) ]&=&\frac \eta 2 {\rm I}+\frac{1}{\eta} {\rm II},
\end{eqnarray*}
where
\begin{eqnarray*}
{\rm I} &=& -\mathbb{E} [ \langle \widehat{b}(Z_0) \widehat{b}^{\prime}(Z_0), \nabla^2 f(Z_0) \rangle_{\textrm{HS}} ], \\
{\rm II} &=& -\mathbb{E} \int_0^1 \int_0^1 r \langle \delta \delta^{\prime}, \nabla^2 f(Z_0+\tl{r} r \delta )-\nabla^2 f(Z_0) \rangle_{\textrm{HS}} \dif\tl{r} \dif r.
\end{eqnarray*}

Under Assumption {\bf (A1) Case 1} and {\bf (A2)}, we claim that there exists constant $C>0$ independent of $\eta$ such that
\begin{eqnarray}
|{\rm I}| &\leq& C, \label{e:claim-1}   \\
|{\rm II}| &\leq& C\eta^{1+\frac{\gamma}{2}}, \label{e:claim-2}
\end{eqnarray}
which implies that
\begin{eqnarray*}
\sup_{h \in {\rm Lip}(1)} |\mathbb{E}[\widehat{\mathcal{A}}f(Z_0)]|
\ \ \leq \ \  \sup_{h\in {\rm Lip}(1)} \{ \frac{\eta}{2} |{\rm I}|+\frac{1}{\eta} |{\rm II}| \}
\ \  \leq \ \ C\eta^{\frac{\gamma}{2}}.
\end{eqnarray*}
Combining this with Eq. \eqref{e:steinequ},  there exists constant $C>0$ independent of $\eta$ such that
\begin{eqnarray*}
d_W(\widehat{\mu},\widehat{\mu}_{\eta})
\ \ = \ \  \sup_{h\in {\rm Lip(1)} } \{  \widehat{\mu}_{\eta} (h) - \widehat{\mu}(h) \}
\ \ = \ \  \sup_{h\in {\rm Lip(1)} } \{ \mathbb{E}^{\widehat{\mu}_{\eta}}[\widehat{\mathcal{A}}f(Z_0)]  \}
\ \ \leq \ \ C\eta^{\frac{\gamma}{2}}.
\end{eqnarray*}

Now we show claims \eqref{e:claim-1} and \eqref{e:claim-2}. By the estimate for $|\nabla^2 f|$ in Proposition \ref{pro:regu1} and the linear growth of $\widehat{b}$ in Lemma \ref{lem:tlb}, there exists constant $C>0$ independent of $\eta$ such that
\begin{eqnarray*}
|{\rm I}|
&\leq& \mathbb{E} [ |\langle \widehat{b}(Z_0) \widehat{b}^{\prime}(Z_0), \nabla^2 f(Z_0) \rangle_{\textrm{HS}}| ]
\ \ \leq \ \ C\E(1+|Z_0|^{6})
\ \ \leq \ \ C,
\end{eqnarray*}
where we have used the fact that  $\widehat{\mu}_{\eta}(|\cdot|^{6}) \leq C$ {and the constant $C$ is independent of $\eta$} (see Lemma \ref{pro:ergodic-MC}). The  claim \eqref{e:claim-1} is proved.

To prove the  claim \eqref{e:claim-2}, we write
\begin{eqnarray}\label{e:II-12}
{\rm II}
&=& -\mathbb{E} \int_0^1 \int_0^1 r \langle \delta \delta^{\prime}, \nabla^2 f(Z_0+\tl{r} r \delta )-\nabla^2 f(Z_0) \rangle_{\textrm{HS}} 1_{ \{ | \delta |\leq 1/8 \} } \dif\tl{r} \dif r  \nonumber  \\
&&-\mathbb{E} \int_0^1 \int_0^1 r \langle \delta \delta^{\prime}, \nabla^2 f(Z_0+\tl{r} r \delta )-\nabla^2 f(Z_0) \rangle_{\textrm{HS}} 1_{ \{ | \delta |> 1/8 \} } \dif\tl{r} \dif r.
\end{eqnarray}

Using the third estimate in Proposition \ref{pro:regu1}, there exists some positive constant $C$ independent of $\eta$ satisfying
\begin{eqnarray}\label{e:II-2}
&& \left| \mathbb{E} \int_0^1 \int_0^1 r \langle \delta \delta^{\prime}, \nabla^2 f(Z_0+\tl{r} r \delta )-\nabla^2 f(Z_0) \rangle_{\textrm{HS}} 1_{ \{ | \delta |\leq 1/8 \} } \dif\tl{r} \dif r  \right|  \nonumber \\
&=&  \left| \mathbb{E} \int_0^1 \int_0^1 r \langle \delta \delta^{\prime},  \frac{ \nabla^2 f(Z_0+\tl{r} r \delta )-\nabla^2 f(Z_0)} {  |\tl{r} r  \delta|^{\gamma}}
 |  \tl{r} r  \delta|^{\gamma} \rangle_{\textrm{HS}} 1_{ \{ | \delta |\leq 1/8 \} } \dif\tl{r} \dif r     \right|  \nonumber \\
&\leq& C \E [ |\delta|^{2+\gamma}  (1+ |Z_0|^{4+\gamma} )  ] \nonumber \\
&\leq& C\eta^{1+\frac{\gamma}{2}},
\end{eqnarray}
where the last inequality holds due to \eqref{e:delta} and the fact that  $\widehat{\mu}_{\eta}(|\cdot|^{6+2\gamma}) \leq C$ {and the constant $C$ is independent of $\eta$} (see Lemma  \ref{pro:ergodic-MC}).

Using the second estimate in Proposition \ref{pro:regu1}, there exists some positive constant $C$ independent of $\eta$ such that
\begin{eqnarray}\label{e:II-1}
&& |\mathbb{E} \int_0^1 \int_0^1 r \langle \delta \delta^{\prime}, \nabla^2 f(Z_0+\tl{r} r \delta )-\nabla^2 f(Z_0) \rangle_{\textrm{HS}} 1_{ \{ |  \delta |> 1/8 \} } \dif\tl{r} \dif r| \nonumber \\
&\leq& C \mathbb{E} \int_0^1 \int_0^1  |\delta|^2 [ |\nabla^2 f(Z_0+\tl{r} r \delta )| + |\nabla^2 f(Z_0)|  ] 1_{ \{ |  \delta |> 1/8 \} } \dif\tl{r} \dif r \nonumber \\
&\leq& C\E[  |\delta|^2 ( 1+ |Z_0|^{4} + |\delta|^{4}  ) 1_{ \{ |  \delta |> 1/8 \} }   ]  \nonumber \\
&\leq& C\left(\E[  |\delta|^6 ]\right)^{\frac 13} \left(\E[ ( 1+ |Z_0|^{4} + |\delta|^{4}  )^3]\right)^{\frac 13}  \left(\PP(|\delta|\geq 1/8)\right)^{\frac 13} \nonumber \\
&\leq& C\eta \left(\PP(|\delta|\geq 1/8)\right)^{\frac 13} \nonumber \\
&\leq& C\eta^2
\end{eqnarray}
where the third inequality holds due to the Chebyshev's inequality and the fact that $\widehat{\mu}_{\eta}(|\cdot|^{12}) \leq C$  ($C$ is independent of $\eta$) by Lemma \ref{pro:ergodic-MC}), and the last inequality is by the following observation:
\begin{equation}
\begin{split}
\PP( |\delta|\geq 1/8) & \le \PP\left(\eta |\hat b(Z_0)| \le \frac1{16}\right)+\PP\left(\sqrt{\eta}|\hat \sigma(Z_0) \xi_1| \le \frac1{16}\right) \\
& \le \PP\left(|Z_0| \le \frac{c}{16\eta}\right)+\PP\left(| \xi_1| \le \frac{c}{16 \sqrt{\eta}}\right) \le \frac{C \E|Z_0|^3}{\eta^3}+Ce^{-\frac{1}{C\eta}}.
 %\leq \ C \E|\delta|^k
%\ \leq \ C\eta^{\frac{k}{2}} \E[1+|Z_0|^k]
%\ \leq \ C \eta^{\frac{k}{2}}
\end{split}
\end{equation}
%for any integers $k\geq 1$. {\red ??}).
Combining \eqref{e:II-12}, \eqref{e:II-2} and \eqref{e:II-1}, we prove the claim \eqref{e:claim-2}.

Next we prove the second part of (i).  Noticing
\begin{eqnarray*}
d_W(\mathcal{L}(\Phi^{-1}(Z_k)),\mu) &\leq& d_W(\mathcal{L}(\Phi^{-1}(Z_k)),\widehat{\mu}_{\eta}\circ \Phi)+
d_W(\mu,\widehat{\mu}_{\eta}\circ \Phi),
\end{eqnarray*}
and that
$d_W(\mu,\widehat{\mu}_{\eta}\circ \Phi)
\leq C\eta^{\frac{\gamma}{2}},$
thus, we just need to give the upper bound for $d_W(\mathcal{L}(\Phi^{-1}(Z_k)),\widehat{\mu}_{\eta}\circ \Phi)$. It follows from definition of $d_W$ and the estimate for $\|\nabla \Phi^{-1}\|_{\infty}$ in \eqref{e:phi-infty} that
\begin{eqnarray*}
&&d_W(\mathcal{L}(\Phi^{-1}(Z_k)),\widehat{\mu}_{\eta}\circ \Phi) \ \ = \ \  \sup_{h\in {\rm Lip}(1)} \{ \E h(\Phi^{-1}(Z_k)) - \int_{\R^d} h(\Phi^{-1}(x)) \widehat{\mu}_{\eta}(dx) \} \\
&=& \| \nabla \Phi^{-1} \|_{\infty} \sup_{h\in {\rm Lip}(1)} \{ \E [\frac{1}{\| \nabla \Phi^{-1} \|_{\infty} } h(\Phi^{-1}(Z_k))] - \int_{\R^d} \frac{1}{\| \nabla \Phi^{-1} \|_{\infty} } h(\Phi^{-1}(x)) \widehat{\mu}_{\eta}(dx) \} \\
&=& \| \nabla \Phi^{-1} \|_{\infty} \sup_{h\in {\rm Lip}(1)} \{ \E h(Z_k) - \int_{\R^d} h(x) \widehat{\mu}_{\eta}(dx) \} \\
&=& \| \nabla \Phi^{-1}\|_{\infty} d_W(\mathcal{L}(Z_k),\widehat{\mu}_{\eta}) \\
&\leq& C\eta^{-1} e^{-ck\eta},
\end{eqnarray*}
where we used Lemma \ref{pro:ergodic-MC} for the last inequality and the positive constant $C$ is independent of $\eta$. Thus, we know
\begin{eqnarray}\label{e:er-st}
d_W(\mathcal{L}(\Phi^{-1}(Z_k)),\mu) &\leq& C \eta^{-1}e^{-ck\eta}+ C(\gamma) \eta^{\frac{\gamma}{2}}.
\end{eqnarray}

For any given error $\varepsilon>0$, taking $\eta \asymp \varepsilon^{\frac{2}{\gamma}}$ and $k\asymp \varepsilon^{-\frac{2}{\gamma}}|\log \varepsilon|$, we have
\begin{eqnarray*}
d_W(\mathcal{L}(\Phi^{-1}(Z_k)),\mu)
&\leq& d_W(\mathcal{L}(\Phi^{-1}(Z_k)),\widehat{\mu}_{\eta}\circ \Phi)+
d_W(\widehat{\mu}_{\eta}\circ \Phi,\mu) \\
&\leq& C\eta^{-1}e^{-ck\eta} + C(\gamma) \eta^{\frac{\gamma}{2}}
\ \ \leq \ \ \varepsilon.
\end{eqnarray*}
We obtain the desired result.
\end{proof}

\subsection{Proof of Theorem \ref{thm:main} (ii)}

In order to prove Theorem \ref{thm:main} (ii), we need the following proposition whose proof is postponed in Section \ref{sec:Jf-Md2}.

\begin{proposition}[Regularities, $b_1 \in \mathcal{C}_b^{\alpha}(\mathbb{R}^d)$ ]\label{pro:regu2}
Let $f$ be the solution to Eq. \eqref{e:steinequ}, under Assumption {\bf (A1) Case 2} and {\bf (A2)}, there exists some positive constant $C=C(\alpha,d)$ such that
\begin{eqnarray*}
|\nabla f(x)| &\leq& C(1+|x|^3), \\
|\nabla^2 f(x)| &\leq& C (1+|x|^4), \\
\sup_{y:|y-x|\leq 1/8 } \frac{|\nabla^2 f(x)-\nabla^2 f(y)|}{ |x-y| | \log |x-y| | } &\leq& C(1+|x|^{\frac{2}{\alpha}+1}).
\end{eqnarray*}
\end{proposition}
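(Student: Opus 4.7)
The plan is to follow the same architecture as the proof of Proposition \ref{pro:regu1}: combine the probabilistic representation $f(x) = -\int_0^\infty \widehat P_t[h - \widehat\mu(h)](x)\,\dif t$ from Lemma \ref{lem:stein-f} with the interior Schauder estimate of \cite[Theorem 6.2]{gilbarg1977schauder} and the extension device of Gurvich \cite{gurvich2014diffusion}. Corollary \ref{cor:hatb}(ii) tells us that $\widehat b, \widehat\sigma \in \mathcal{C}^{1,\alpha}(\R^d)$, which is a strictly better input than in Case 1 and is what makes the target regularity exponent $1$ (up to a log loss) reachable, whereas in Case 1 we could only reach $\gamma < 1$.

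The first two estimates on $|\nabla f|$ and $|\nabla^2 f|$ reproduce exactly the polynomial growth exponents of Proposition \ref{pro:regu1}, and I plan to derive them in the same way. From Lemma \ref{lem:stein-f} one has $|f(x)| \leq C(1+|x|^2)$; interior Schauder on $B_1(x)$, with the right-hand side of \eqref{e:steinequ} bounded pointwise by $|h(y) - \widehat\mu(h)| \leq C(1+|y|)$ on $B_1(x)$ (since $h\in {\rm Lip}(1)$ and $\widehat\mu$ has all moments), then yields $\mathcal{C}^{1}$ and $\mathcal{C}^{2}$ bounds with polynomial-in-$|x|$ constants. Propagating $|f|_{0;B_1(x)}$ through the bootstrap reproduces the exponents $3$ and $4$ appearing in Proposition \ref{pro:regu1}; as the $\mathcal{C}^{1,\alpha}$ regularity of $\widehat b,\widehat\sigma$ is strictly better than the $\mathcal{C}^\gamma$ of Case 1, this step is no harder.

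For the third (key) estimate I pass to the equation for $u_k := \partial_k f$, obtained by formally differentiating the Poisson equation \eqref{e:steinequ} in direction $e_k$:
\begin{equation*}
\widehat{\mathcal{A}} u_k \;=\; \partial_k h \;-\; (\partial_k \widehat b)\cdot \nabla f \;-\; \tfrac{1}{2}\langle \partial_k \widehat a, \nabla^2 f\rangle_{\rm HS}.
\end{equation*}
Because $h\in {\rm Lip}(1)$, the term $\partial_k h$ is only in $L^\infty$; the corrector terms lie in $\mathcal{C}^\alpha_{\rm loc}$ by Corollary \ref{cor:hatb}(ii) combined with the first-round Schauder $\mathcal{C}^{2,\alpha}$ estimate for $f$. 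The classical improvement for second-order uniformly elliptic equations with $\mathcal{C}^{1,\alpha}$ coefficients and bounded right-hand side then gives that $\nabla u_k$, i.e.\ $\nabla^2 f$, is log-Lipschitz on balls $B_{1/8}(z)$. Gurvich's extension trick finally converts this local estimate to the claimed global one on $\R^d$: for each anchor point $x$ I restrict to $B_1(x)$, extend the PDE data smoothly outside, and apply the local log-Lipschitz estimate with constants tracking $|f|_{0;B_1(x)}$, $|\nabla f|_{0;B_1(x)}$ and $|\nabla^2 f|_{0;B_1(x)}$, substituting the polynomial bounds already established. Accumulating these polynomial factors through the Hölder-interpolation bootstrap produces the exponent $\tfrac{2}{\alpha}+1$.

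The main obstacle is the log-Lipschitz upgrade itself. A naive second-round Schauder application only improves $\nabla^2 f$ to $\mathcal{C}^{2,\alpha}$, and the log factor must be extracted by a dyadic/interpolation argument on scale $r = |y-z|$: split the increment $\nabla^2 f(y) - \nabla^2 f(z)$ into a scale-$r$ contribution (bounded by rescaled Schauder applied to $u_k$) and a complementary contribution (bounded by the already-available $\mathcal{C}^{2,\alpha}$ estimate), then optimise the cutoff to convert the $\alpha$-Hölder bound into a $|x-y|\,|\log|x-y||$ bound. Propagating the polynomial-in-$|x|$ constants cleanly through this optimisation, and in particular landing at exactly exponent $\tfrac{2}{\alpha}+1$ rather than something larger, is the delicate bookkeeping the proof in Section \ref{sec:Jf-Md2} will have to carry out.
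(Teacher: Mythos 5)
Your treatment of the first two estimates mirrors the paper's: local Schauder on balls $\mathcal{B}_x = B_{r(x)}(x)$ of radius $r(x)=\frac{1}{2(1+|x|)}$, constants tracked through the polynomial growth of $|f|_{0;\mathcal{B}_x}$. That part is fine.

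For the log-Lipschitz estimate, however, you and the paper take genuinely different routes, and yours has a gap. You propose to \emph{differentiate} the Poisson equation \eqref{e:steinequ} in direction $e_k$ and apply the endpoint estimate ``$L^\infty$ right-hand side $\Rightarrow$ log-Lipschitz gradient'' to the resulting equation for $u_k=\partial_k f$. The paper instead leaves the equation undifferentiated but moves the drift term to the right-hand side, writing $\bar{\mathcal L} f := \tfrac12\langle\widehat a,\nabla^2 f\rangle_{\rm HS} = h-\widehat\mu(h)-\langle\widehat b,\nabla f\rangle =: \bar h$, and then invokes the \emph{dual} endpoint estimate (Lemma~\ref{freeze-cof}, quoted from Wang 2006): ``Lipschitz right-hand side for a pure second-order elliptic operator $\Rightarrow$ log-Lipschitz second derivatives.'' To make this work the paper has to verify that $\bar h$ is locally Lipschitz, which is the content of claim~\eqref{e:claim-3}, namely $|\widehat b|_{0,1;B_\rho(z)}\le C(1+|z|)$, derived from the $\mathcal C^{2,\alpha}$ regularity of $u$ in Lemma~\ref{lem:regu-u}. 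It then closes the argument not by your dyadic optimisation but via the interpolation inequalities of Du--Liu and an absorption lemma (\cite[Lemma 5.1]{DJ}) to swallow the $[\nabla^2 f]_{\rm LL}$ term that reappears on the right; the exponent $\tfrac{2}{\alpha}+1$ comes out of the choice $\varepsilon\rho=[4C(\alpha,\rho)(1+|z|)]^{-1/\alpha}\wedge 1$ in that absorption.

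Two concrete problems with your route. First, $h\in{\rm Lip}(1)$ only, so $\partial_k h$ exists merely a.e.\ in $L^\infty$; the differentiated equation holds only weakly, and since $f$ is a priori only $\mathcal C^{2,\alpha}_{\rm loc}$ (not $\mathcal C^3$), you cannot literally write $\widehat{\mathcal A} u_k$ classically. Making this rigorous requires an approximation argument that the paper's approach simply bypasses. Second, your description of extracting the log factor by ``optimising a cutoff to convert the $\alpha$-H\"older bound into $|x-y||\log|x-y||$'' is not coherent as stated: for $\alpha<1$, a $\mathcal C^\alpha$ modulus $r^\alpha$ is strictly weaker than $r|\log r|$ as $r\to 0$, so no optimisation over $\mathcal C^\alpha$ bounds alone can produce the log-Lipschitz modulus. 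The log factor has to come from the endpoint singular-integral estimate itself, exactly as in Lemma~\ref{freeze-cof}, not from interpolating H\"older scales. Your overall strategy (exploit the $\mathcal C^{1,\alpha}$ regularity of $\widehat b,\widehat\sigma$ to reach a near-optimal exponent up to a log) is the right instinct, but the paper's implementation — keep the equation at second order, feed the drift into a Lipschitz right-hand side, apply the Zygmund-class Schauder estimate, then interpolate and absorb — is both cleaner and what actually makes the log modulus appear.
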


\begin{proof}[Proof of Theorem \ref{thm:main} (ii).]
For $h\in {\rm Lip}(1)$, let $f$ be the solution to  the Poisson equation \eqref{e:steinequ}. With the same notations as in the proof of Theorem \ref{thm:main} (i), we have
\begin{eqnarray*}
\mathbb{E}[\widehat{\mathcal{A}}f(Z_0) ]&=&\frac \eta 2 {\rm I}+\frac{1}{\eta} {\rm II},
\end{eqnarray*}
where
\begin{eqnarray*}
{\rm I} &=& -\mathbb{E} [ \langle \widehat{b}(Z_0) \widehat{b}^{\prime}(Z_0), \nabla^2 f(Z_0) \rangle_{\textrm{HS}} ], \\
{\rm II} &=& -\mathbb{E} \int_0^1 \int_0^1 r \langle \delta \delta^{\prime}, \nabla^2 f(Z_0+\tl{r} r \delta )-\nabla^2 f(Z_0) \rangle_{\textrm{HS}} \dif\tl{r} \dif r.
\end{eqnarray*}
Under assumption {\bf (A1) Case 2} and {\bf (A2)}, by Proposition \ref{pro:regu2}, with the similar calculations for claims \eqref{e:claim-1} and \eqref{e:claim-2}, we can show that there exists constant $C>0$ independent of $\eta$ such that
\begin{eqnarray*}
|{\rm I}| &\leq& C,  \\
|{\rm II}| &\leq& C\eta^{\frac{3}{2}}|\log \eta|,
\end{eqnarray*}
which implies that
\begin{eqnarray*}
\sup_{h\in {\rm Lip}(1)} |\mathbb{E}[\widehat{\mathcal{A}}f(Z_0)]|
\ \ \leq \ \  \sup_{h\in {\rm Lip}(1)} \{ \frac{\eta}{2} |{\rm I}|+\frac{1}{\eta} |{\rm II}| \}
\ \  \leq \ \ C\eta^{\frac{1}{2}}|\log \eta|.
\end{eqnarray*}
Combining this with Eq. \eqref{e:steinequ},  there exists constant $C>0$ independent of $\eta$ such that
\begin{eqnarray*}
d_W(\widehat{\mu},\widehat{\mu}_{\eta})
\ \ = \ \  \sup_{h\in {\rm Lip(1)} } \{  \widehat{\mu} (h) - \widehat{\mu}_{\eta} (h) \}
\ \ = \ \  \sup_{h\in {\rm Lip(1)} } \{ \mathbb{E}^{\widehat{\mu}_{\eta}}[\widehat{\mathcal{A}}f(Z_0)]  \}
\ \ \leq \ \ C\eta^{\frac{1}{2}} |\log \eta|.
\end{eqnarray*}

By the  similar calculations as for inequality \eqref{e:er-st}, one has
\begin{eqnarray*}
d_W(\mathcal{L}(\Phi^{-1}(Z_k)),\mu) &\leq& C\eta^{-1}e^{-ck\eta}+ C \eta^{\frac{1}{2}} |\log \eta|.
\end{eqnarray*}
For any given error $\varepsilon>0$, taking $\eta \asymp \varepsilon^{\frac{8}{3}}$ and $k\asymp \varepsilon^{-\frac{8}{3}}|\log \varepsilon|$, we obtain
\begin{eqnarray*}
d_W(\mathcal{L}(\Phi^{-1}(Z_k)),\mu)
&\leq& d_W(\mathcal{L}(\Phi^{-1}(Z_k)),\widehat{\mu}_{\eta}\circ \Phi)+
d_W(\widehat{\mu}_{\eta}\circ \Phi,\mu) \\
&\leq& C\eta^{-1}e^{-ck\eta} + C\eta^{\frac{1}{2}}|\log \eta|
\ \ \leq \ \ \varepsilon.
\end{eqnarray*}
The proof is complete.
\end{proof}

\section{Proof of Proposition \ref{pro:regu1}.}  \label{sec:Calpha}

Before proving Proposition \ref{pro:regu1}, we first give some definitions.

For  a real number $\tau$ and $k=0,1,2$, $\alpha\in (0,1]$, $\mathcal{D}$ is a open subset of $\mathbb{R}^d$, we define
\begin{align*}
[f]^{(\tau)}_{k,0;\mathcal{D}} &=[f]^{(\tau)}_{k;\mathcal{D}}= \sup_{x \in \mathcal{D}}  d^{k+\tau}_{x} |\nabla^k f(x)|, \\
[f]^{(\tau)}_{k,\alpha;\mathcal{D}} &= \sup_{x,y \in \mathcal{D}, x\neq y} d^{k+\alpha+\tau}_{x,y} \frac{ |\nabla^k f(x)- \nabla^k f(y)|  }{ |x-y|^{\alpha} }, \\
|f|^{(\tau)}_{k;\mathcal{D}}&=\sum_{j=1}^{k} [f]^{(\tau)}_{j;\mathcal{D}},\\
|f|^{(\tau)}_{k,\alpha;\mathcal{D}} &= |f|^{(\tau)}_{k;\mathcal{D}}+[f]^{(\tau)}_{k,\alpha;\mathcal{D}},
\end{align*}
where $d_x={\rm dist}(x,\partial \mathcal{D}),d_{x,y}=\min(d_x,d_y)$.
When $\tau=0$, we also write  $[f]^{*}_{k;\mathcal{D}}:=[f]^{(0)}_{k;\mathcal{D}}$, $[f]^{*}_{k,\alpha;\mathcal{D}}:=[f]^{(0)}_{k,\alpha;\mathcal{D}}$ and $|f|^{*}_{k,\alpha;\mathcal{D}}:=|f|^{(0)}_{k,\alpha;\mathcal{D}}$.
It follows from Gilbarg and Trudinger \cite[(6.11)]{gilbarg1977schauder}  that
\begin{equation}\label{e:product-inter-est}
|fg|^{(\tau_1+\tau_2)}_{0,\alpha;\mathcal{D}} \leq |f|^{(\tau_1)}_{0,\alpha;\mathcal{D}} |g|^{(\tau_2)}_{0,\alpha;\mathcal{D}} \quad \text{for} \quad \tau_1+\tau_2 \geq 0,
\end{equation}
if we denote $d_{\mathcal{D}}={\rm diam}(\mathcal{D})$, one has
\begin{eqnarray}\label{alpha-upperbound}
[u]^*_{k,\alpha;\mathcal{D}} \leq \max(1,d_{\mathcal{D}}^{k+\alpha})  [u]_{k,\alpha;\mathcal{D}}.
\end{eqnarray}
Let $\mathcal{D}^{\prime} \subset \subset \mathcal{D}$ and $d_{\mathcal{D}}^{\prime}={\rm dist}(\mathcal{D^{\prime}},\partial \mathcal{D})$, then we know
\begin{eqnarray}\label{alpha-lowerbound}
\min (1,(d_{\mathcal{D}}^\prime)^{k+\alpha}) [u]_{k,\alpha;\mathcal{D}^{\prime}} \leq [u]^*_{k,\alpha;\mathcal{D}}.
\end{eqnarray}

The following is a classic result from Gilbarg and Trudinger \cite[Theorem 6.2]{gilbarg1977schauder}.

\begin{lemma}[Schauder interior estimates for $\mathcal{C}^\alpha$ function]\label{lem:sch-est}
Let $\mathcal{D}$ be a open subset of $\mathbb{R}^d$, and let $u\in \mathcal{C}_{b}^{2,\alpha}(\mathcal{D})$ be a solution in $\mathcal{D}$ of the equation
\begin{equation}
\mathcal{L}u=\langle a(x), \nabla^2 u(x) \rangle_{\rm HS}+\langle  b(x), \nabla u(x)  \rangle=f,
\end{equation}
where $f\in \mathcal{C}^{\alpha}(\mathcal{D})$ and there are positive constants $\kappa,K$ such that
\begin{equation}\label{schauder-coff-est-1}
a^{ij} \xi_{i} \xi_{j} \geq \kappa |\xi|^2, \quad \forall x\in \mathcal{D},\xi \in \mathbb{R}^d,
\end{equation}
and
\begin{equation}\label{schauder-coff-est-2}
|a^{ij}|^{(0)}_{0,\alpha;\mathcal{D}},|b^i|_{0,\alpha;\mathcal{D}}^{(1)}\leq K.
\end{equation}
Then there exists some constant $C=C(d,\alpha,\kappa,K)$ such that
\begin{equation}\label{e:ustar}
|u|^{*}_{2,\alpha;\mathcal{D}} \leq C\left(|u|_{0;\mathcal{D}}+|f|^{(2)}_{0,\alpha;\mathcal{D}}\right).
\end{equation}
\end{lemma}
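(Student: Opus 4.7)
The plan is to prove this by the classical perturbation argument of freezing the coefficients, combined with the use of the distance-weighted interior Hölder norms $|\cdot|^{*}_{2,\alpha;\mathcal{D}}$ and interpolation inequalities to absorb the top-order term back into the left-hand side.

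First I would handle the constant-coefficient case. For an operator $\mathcal{L}_0 = \langle a_0, \nabla^2 \cdot \rangle$ with $a_0$ a constant symmetric matrix satisfying $a_0 \geq \kappa {\rm I}$, a linear change of variables $y = a_0^{-1/2} x$ reduces $\mathcal{L}_0 u = g$ to Poisson's equation $\Delta \tilde u = \tilde g$. For the Laplacian on a ball $B_r$, the Newtonian potential representation together with standard estimates on Hölder seminorms of singular integrals (Calderón–Zygmund / Kellogg type) gives the scaled interior bound
\begin{equation*}
r^{2+\alpha}[\nabla^2 u]_{\alpha;B_{r/2}} \;\leq\; C\bigl( \|u\|_{\infty;B_r} + r^{2+\alpha}[g]_{\alpha;B_r}\bigr).
\end{equation*}

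Next I would freeze the coefficients at an arbitrary $x_0 \in \mathcal{D}$ and rewrite the equation as
\begin{equation*}
\mathcal{L}_{x_0} u \;=\; f \;-\; \langle a(x) - a(x_0),\, \nabla^2 u(x)\rangle_{\rm HS} \;-\; \langle b(x),\, \nabla u(x)\rangle.
\end{equation*}
On a ball $B_r(x_0)$ with $r \leq \tfrac{1}{2} d_{x_0}$ (so that $B_r(x_0) \subset \mathcal{D}$), the Hölder continuity hypothesis \eqref{schauder-coff-est-2} yields $|a(x)-a(x_0)| \leq K r^\alpha$, while $b$ is bounded under the $|\cdot|^{(1)}_{0,\alpha;\mathcal{D}}$ norm. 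Applying the constant-coefficient estimate from the first step to this equation (after scaling the ball to unit size), then multiplying through by $r^{2+\alpha}$ and rephrasing the bound in terms of the weighted seminorms using the definition of $d^{k+\alpha}_{x,y}$ together with \eqref{alpha-upperbound}–\eqref{alpha-lowerbound}, gives a pointwise inequality of the form
\begin{equation*}
[u]^{*}_{2,\alpha;\mathcal{D}} \;\leq\; C\Bigl( |u|_{0;\mathcal{D}} + |f|^{(2)}_{0,\alpha;\mathcal{D}} + K r^\alpha\,[u]^{*}_{2,\alpha;\mathcal{D}} + [u]^{*}_{1;\mathcal{D}} + [u]^{*}_{2;\mathcal{D}}\Bigr).
\end{equation*}
To patch the local estimates into the global weighted norm $[u]^{*}_{2,\alpha;\mathcal{D}}$, I would take the supremum over pairs $x,y \in \mathcal{D}$ and choose the radius $r$ as a suitable multiple of $d_{x,y}$, so that the distance weights on both sides match up.

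The main obstacle is the absorption step. The inequality above contains $[u]^{*}_{2,\alpha;\mathcal{D}}$ on both sides through the factor $K r^\alpha$, and also the lower-order weighted seminorms $[u]^{*}_{1;\mathcal{D}}$ and $[u]^{*}_{2;\mathcal{D}}$. I would first choose the relative radius small enough so that the factor $K r^\alpha$ in front of $[u]^{*}_{2,\alpha;\mathcal{D}}$ is strictly less than $1/2$, allowing that term to be absorbed. Then I would invoke the standard interpolation inequalities for the weighted Hölder norms,
\begin{equation*}
[u]^{*}_{1;\mathcal{D}},\; [u]^{*}_{2;\mathcal{D}} \;\leq\; \varepsilon\,[u]^{*}_{2,\alpha;\mathcal{D}} + C_\varepsilon\,|u|_{0;\mathcal{D}},
\end{equation*}
choosing $\varepsilon$ small enough to absorb those remaining terms as well. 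Combining everything yields the desired bound $|u|^{*}_{2,\alpha;\mathcal{D}} \leq C(|u|_{0;\mathcal{D}} + |f|^{(2)}_{0,\alpha;\mathcal{D}})$. The delicate part throughout is the bookkeeping of the distance weights so that the scaling invariance of the estimate on each ball lines up correctly with the definitions of $[\cdot]^{*}_{k,\alpha;\mathcal{D}}$ and $|\cdot|^{(\tau)}_{0,\alpha;\mathcal{D}}$.
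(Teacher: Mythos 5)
The paper does not prove this lemma; it quotes it verbatim as Gilbarg and Trudinger's Theorem~6.2, so there is no internal proof to compare against. Your outline is precisely the classical Gilbarg--Trudinger argument for that theorem (constant-coefficient estimate via the Newtonian potential, freezing the coefficients, then absorbing the top-order seminorm after choosing a small relative radius and applying the weighted interpolation inequalities), and it is correct in substance. The one point worth tightening in a full write-up is that the radius $r$ in your intermediate inequality must be taken as a fixed small multiple of $d_{x_0}$ (a dimensionless parameter $\mu$), so that the absorbing factor $K\mu^{\alpha}$ is independent of the base point and the supremum over $x_0$ can be taken before absorption; as written, $r$ appears simultaneously as a local quantity and in a global seminorm estimate, which is a minor abuse of notation rather than a genuine gap.
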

%\begin{lemma}\label{lem:sch-est}
%Let $u\in \mathcal{C}^{2+\alpha}(\mathcal{D})$, $f\in \mathcal{C}^{\alpha}(\overline{\mathcal{D}})$ satisfy $\mathcal{L}u=f$ in a bounded domain $\mathcal{D}\subset\mathbb{R}^d$, where $\mathcal{L}$ is strictly elliptic and its coefficients are in $\mathcal{C}^{\alpha}(\overline{\mathcal{D}})$. Then if  $\mathcal{D}^{\prime} \subset \subset \mathcal{D}$ with $dist(\mathcal{D}^{\prime},\partial \mathcal{D}) \geq \hat{d}$, there is a constant $C$ such that
%\begin{eqnarray*}
%\hat{d} |\nabla u|_{0;\mathcal{D}^{\prime}} + \hat{d}^2 |\nabla^2 u|_{0;\mathcal{D}^{\prime}} + \hat{d}^{2+\alpha}[ \nabla^2 u ]_{\alpha;\mathcal{D}^{\prime}}
%&\leq& C(|u|_{0;\mathcal{D}}+|f|_{0,\alpha;\mathcal{D}}),
%\end{eqnarray*}
%where $C$ depends only on the ellipticity constant $\lambda_{\sigma}$ and the $\mathcal{C}^{\alpha}(\overline{\mathcal{D}})$ norms of the coefficients of $\mathcal{L}$ (as well as on $\hat{d}, \alpha, $ and the diameter of $\mathcal{D}$).
%\end{lemma}

\begin{proof}[Proof of Proposition \ref{pro:regu1}]
 For any $x\in \mathbb{R}^d$ and $0<r(x)=\frac{1}{2(1+|x|)}\leq 1/2$, denote
\begin{eqnarray*}
\mathcal{B}_x=B_{r(x)}(x),
\end{eqnarray*} and consider the Poisson equaion \eqref{e:steinequ} on $\mathcal{B}_x$ as follows:
\begin{equation}\label{e:steinequ-sec5}
\widehat{\mathcal{A}} f=\langle  \widehat{b}(z), \nabla f(z)  \rangle + \frac{1}{2}  \langle \widehat{a}(z), \nabla^2 f(z) \rangle_{\rm HS}=h(z) - \widehat{\mu}(h).
\end{equation}

Recall that
\begin{equation}
\widehat{b}=(\lambda u+\nabla \Phi \cdot b_2) \circ \Phi^{-1}.
\end{equation}
%where  $u$ is solution to the transform equation %\eqref{e:pde-b1} and $\Phi(y)=y+u(y)$ in %\eqref{e:Phi}.
%Recall that by  the regularity of Zvonkin's transform under the assumption  $b_1\in \mathbb{L}^{\infty}(\mathbb{R}^d) \cap \mathbb{L}^1(\mathbb{R}^d)$ in Lemma \ref{lem:regu-ul}, we can obtain that $u\in \mathcal{C}_b^{1,\gamma}$ with $\gamma=1-d/p\in (0,1)$ and,
%\begin{align*}
%&\|u\|_{\infty}+\|\nabla u\|_{\infty} \leq C  \lambda^{-\frac{1}{2}\gamma}.\\
%&\frac 1 2 \leq \|\nabla \Phi\|_{\infty}, |\nabla \Phi^{-1}\|_{\infty} \leq 2,
%\end{align*}
By Corollary \ref{cor:hatb}, we have $\widehat{b}\in \mathcal{C}^{\gamma}(\mathbb{R}^d)$.  Combing this with the linear growth condition for $\widehat{b}$ in Lemma \ref{lem:tlb} below, there exists a constant $C(\lambda)$ such that
\begin{eqnarray*}
|\widehat{b}|_{0,\gamma;\mathcal{B}_x}^{(1)} &=& |\widehat{b}|^{(1)}_{0;\mathcal{B}_x}+[\widehat{b}]^{(1)}_{0,\gamma;\mathcal{B}_x}
\ \ \leq \ \   \sup_{y \in \mathcal{B}_x}  d_{y} |\widehat{b}(y)|+\sup_{y,z \in \mathcal{B}_x} d^{1+\gamma}_{y,z} \frac{ |\widehat{b}(y)-\widehat{b}(z)|  }{ |y-z|^{\gamma} }
 \ \ \leq \ \  C(\lambda).
\end{eqnarray*}

Meanwhile, for the transformed diffusion term, we have
\begin{eqnarray*}
\widehat{\sigma} &=& (\nabla \Phi \cdot \sigma) \circ \Phi^{-1},
\end{eqnarray*}
thus $\widehat{a}=\widehat{\sigma} \widehat{\sigma}^{\prime}\in \mathcal{C}^{\gamma}(\R^d)$. Moreover, it follows from Lemma \ref{lem:tlb} below that $\widehat{a}$ is strictly elliptic.

Taking $\mathcal{D}=\mathcal{B}_x$, the coefficients of Eq. \eqref{e:steinequ-sec5} satisfy the conditions \eqref{schauder-coff-est-1} and \eqref{schauder-coff-est-2} in Lemma \ref{lem:sch-est}, and thus we have
\begin{equation}\label{e:schauder-stein-1}
|f|^{*}_{2,\alpha;\mathcal{B}_x} \leq C\left(|f|_{0;\mathcal{B}_x}+|h-\widehat{\mu}(h)|^{(2)}_{0,\alpha;\mathcal{B}_x} \right),
\end{equation}
namely,
\begin{align}\label{alpha-rx-est}
|\nabla f|_{0;\mathcal{B}_x} &\leq C \left(|f|_{0;\mathcal{B}_{x}}+|h-\widehat{\mu}(h)|^{(2)}_{0,\gamma;\mathcal{B}_{x}}\right) (1+|x|),\nonumber \\
|\nabla^2 f|_{0;\mathcal{B}_x} &\leq C \left(|f|_{0;\mathcal{B}_{x}}+|h-\widehat{\mu}(h)|^{(2)}_{0,\gamma;\mathcal{B}_{x}}\right) (1+|x|)^{2},\nonumber\\
[f]_{2,\gamma;\mathcal{B}_x} &\leq  C  \left(|f|_{0;\mathcal{B}_{x}}+|h-\widehat{\mu}(h)|^{(2)}_{0,\gamma;\mathcal{B}_{x}}\right) (1+|x|)^{2+\gamma}.
\end{align}

For any fixed $0<r_0\leq 1$, we know
\begin{eqnarray*}
B_{r_0}(x)\subset \bigcup_{y\in B_{r_0}(x)} \mathcal{B}_y,
\end{eqnarray*}
thus
\begin{equation}
[f]_{2,\gamma;B_{r_0}(x)}\ \  \leq  \ \  C\sup_{y\in  B_{r_0}(x)}  (|f|_{0;\mathcal{B}_{y}}+|h-\widehat{\mu}(h)|^{(2)}_{0, \gamma; \mathcal{B}_{y}})(1+|x|)^{2+\gamma}.
\end{equation}

Recalling from Lemma \ref{lem:stein-f} that $|f(x)|\leq C(1+|x|^2)$ for all $x\in \R^d$,
 one has for all $x\in \mathbb{R}^d$,
\begin{equation}
\sup_{y\in  B_{r_0}(x)}  |f|_{0;\mathcal{B}_{y}} \leq \sup_{y\in  B_{r_0}(x)} \sup_{z\in \mathcal{B}_y} C(1+|z|^2) \leq \sup_{y\in  B_{r_0}(x)} 3 C(1+|y|^2)\leq 9 C(1+|x|^2),
\end{equation}
where we used the fact that
\begin{eqnarray*}
\sup_{y:|y|\leq 1} \left\{ \frac{1+|x+y|^2}{1+|x|^2} \right\} &\leq& 3.
\end{eqnarray*}

Since $|h(x)|\leq C(1+|x|^2)$ , we have $|h|^{(2)}_{0,1;\mathcal{B}_x}\leq C(1+|x|^2)$ and
\begin{eqnarray*}
\sup_{y\in  B_{r_0}(z)} |h|^{(2)}_{0,\gamma;\mathcal{B}_{y}} \leq  C (1+|x|^2).
\end{eqnarray*}
Let
\begin{eqnarray*}
H(x) &=& C(1+|x|^2)(1+|x|)^{2+\gamma}.
\end{eqnarray*}
We have, for any $x\in \mathbb{R}^d$,
\begin{align*}
|\nabla f(x)|\leq |\nabla f|_{0;\mathcal{B}_x}& \leq H(x)/(1+|x|)^{1+\gamma}, \\
|\nabla^2 f(x)|\leq |\nabla^2 f|_{0;\mathcal{B}_x}& \leq H(x)/(1+|x|)^{\gamma}, \\
[f]_{2,\gamma;B_{r_0}(x)} \leq& H(x).
\end{align*}
The proof is complete.
\end{proof}

\section{Proof of Proposition \ref{pro:regu2}.} \label{sec:Jf-Md2}
Before proving Proposition \ref{pro:regu2}, we give some definitions.
\begin{definition}[log-Lipschitz] \label{def:log-Lip}
For a function $f$, we define its log-Lipschitz semi-norm by
\begin{eqnarray}\label{e:log-Lip}
[f]_{\textrm{LL}} &=& \sup_{0<|x-y|<1} \frac{|f(x)-f(y)|}{|x-y||\log |x-y||},
\end{eqnarray}
and we call the function $f$ is log-Lipschitz if $[f]_{\textrm{LL}}<\infty$.
%\marginnote{\lihu Carefully rewrite this section!!!}
\end{definition}

Define the related norms on open set $\mathcal{D}$, for integers $k=0,1,2$, by
\begin{eqnarray*}
|f|_{k,{\rm LL};\mathcal{D}} &=& |f|_{k;\mathcal{D}}+\sup_{x,y \in \mathcal{D}, 0<|x-y|<1} \frac{ |\nabla^k f(x)- \nabla^k f(y)|  }{ |x-y||\log |x-y||}.
\end{eqnarray*}

Similarly, we define $[u]^{*}_{k,{\rm LL};\mathcal{D}}$ as
\begin{equation}\label{star-dis}
[u]^{*}_{k,{\rm LL};\mathcal{D}}=\sup_{x,y \in \mathcal{D}, 0<|x-y|<1} d^{k+1}_{x,y} \frac{ |\nabla^k f(x)- \nabla^k f(y)|  }{ |x-y||\log |x-y||},
\end{equation}
where $d_x={\rm dist}(x,\partial \mathcal{D}),d_{x,y}=\min(d_x,d_y)$. Denote $d_{\mathcal{D}}={\rm diam}(\mathcal{D})$. One has
\begin{eqnarray}\label{upperbound}
 [u]^*_{k,\textrm{LL};\mathcal{D}} &\leq& \max(1,d_{\mathcal{D}}^{k+1})  [u]_{k,\textrm{LL};\mathcal{D}}.
\end{eqnarray}
Let $\mathcal{D}^{\prime} \subset \subset \mathcal{D}$ and $d_{\mathcal{D}}^{\prime}={\rm dist}(\mathcal{D^{\prime}},\partial \mathcal{D})$, then
\begin{eqnarray}\label{lowerbound}
\min (1,(d_{\mathcal{D}}^\prime)^{k+1}) [u]_{k,\textrm{LL};\mathcal{D}^{\prime}} &\leq& [u]^*_{k,\textrm{LL};\mathcal{D}}.
\end{eqnarray}

We are at the position to state the following important Lemma which is crucial for proving Proposition \ref{pro:regu2}.
\begin{lemma}\label{pro:local}
For any $z\in \R^d$, let $f$ be a classical solution of Eq. \eqref{e:steinequ} in $B_{\rho}(z)$ with $0< \rho \leq 1/2$.
 If $h$ is Lipschitz, for $\alpha\in (0,1)$ there exists a positive constant $C$, independent of $z$, for $x, y\in B_{\rho/4}(z)$ such that
\begin{eqnarray}\label{e:uLip}
\frac{ |\nabla^2 f(x)-\nabla^2 f(y)|}{ |x-y| | \log |x-y| |  } &\leq &
C \left(| f |_{0;B_{\rho}(z)}+1\right)(1+|z|^{\frac{2}{\alpha}+1}).
\end{eqnarray}
\end{lemma}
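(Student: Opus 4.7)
My plan is to apply the interior Schauder estimate of Lemma \ref{lem:sch-est} with an auxiliary H\"older exponent $\alpha'\in(0,1)$ and then optimize $\alpha'\nearrow 1$, converting the blow-up of the Schauder constant into the logarithmic factor that characterizes borderline log-Lipschitz regularity. By Corollary \ref{cor:hatb}(ii), $\widehat{b},\widehat{\sigma}\in \mathcal{C}^{1,\alpha}(\R^d)$, hence $\widehat{a}=\widehat{\sigma}\widehat{\sigma}^{\prime}\in\mathcal{C}^{1,\alpha}(\R^d)$, with uniform ellipticity $\widehat{a}\ge \kappa I$ inherited from Assumption (A2) and \eqref{e:phi-infty}. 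The linear growth of $\widehat{b}$ from Lemma \ref{lem:tlb} together with its $\mathcal{C}^{1,\alpha}$ regularity gives a coefficient-norm bound $K\le C(1+|z|)^{p(\alpha,d)}$ on $B_\rho(z)$ with $\rho\le 1/2$, and since $h\in{\rm Lip}(1)$ has linear growth, a direct calculation yields $|h-\widehat{\mu}(h)|^{(2)}_{0,\alpha';B_\rho(z)}\le C(1+|z|^2)$ uniformly in $\alpha'\in(0,1)$.

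Applying Lemma \ref{lem:sch-est} then produces
$$|f|^{*}_{2,\alpha';B_\rho(z)}\le C(d,\alpha',\kappa,K(z))\bigl(|f|_{0;B_\rho(z)}+1+|z|^2\bigr),$$
and the relation \eqref{alpha-lowerbound} transfers the bound from $B_\rho(z)$ to $B_{\rho/4}(z)$. The quantitative heart of the argument is the $\alpha'$-dependence of the Schauder constant: inspection of the classical Campanato--Morrey-based proof (or an equivalent rescaling-plus-iteration argument) shows $C(d,\alpha',\kappa,K)\le \bar C\, K^{q(d)}/(1-\alpha')$, so after bookkeeping one gets
$$[\nabla^2 f]_{0,\alpha';B_{\rho/4}(z)}\le \frac{C(|f|_{0;B_\rho(z)}+1)(1+|z|)^{M}}{1-\alpha'}$$
for a fixed polynomial exponent $M=M(\alpha,d)$. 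For $x,y\in B_{\rho/4}(z)$ with $0<|x-y|<1$, the choice $\alpha'=1-1/|\log|x-y||$ gives simultaneously $|x-y|^{\alpha'}=e|x-y|$ and $(1-\alpha')^{-1}=|\log|x-y||$, which produces exactly the log-Lipschitz bound \eqref{e:uLip}.

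The main technical obstacle is isolating the explicit $(1-\alpha')^{-1}$ blow-up of the Schauder constant, since Lemma \ref{lem:sch-est} only records the qualitative dependence $C=C(d,\alpha',\kappa,K)$; extracting this rate requires either re-entering the standard Schauder proof machinery (Campanato--Morrey seminorms) or running a rescaling scheme on unit balls together with an iteration in H\"older scales. Matching the precise polynomial exponent $M=\tfrac{2}{\alpha}+1$ in $|z|$ is a separate, more mechanical bookkeeping task involving how the $\mathcal{C}^{1,\alpha}$ norms of $\widehat{b}$ and $\widehat{a}$ scale on $B_\rho(z)$, how they enter the Schauder constant, and how they combine with the $(1+|z|^2)$ factor from the right-hand side.
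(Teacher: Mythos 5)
Your route is genuinely different from the paper's and, at a conceptual level, plausible: you apply the interior Schauder estimate (Lemma~\ref{lem:sch-est}) to the full operator $\widehat{\mathcal{A}}$ at a H\"older exponent $\alpha'<1$ and then let $\alpha'\nearrow 1$ at the rate $\alpha'=1-1/|\log|x-y||$, absorbing the divergence of the Schauder constant into the logarithm. The paper instead moves the first-order term $\langle\widehat{b},\nabla f\rangle$ to the right-hand side, reducing to a pure second-order equation $\bar{\mathcal{L}}f=\bar h$, invokes a ready-made log-Lipschitz estimate for such equations (Lemma~\ref{freeze-cof}, attributed to Wang's Schauder paper), and then closes the loop with an interpolation inequality and an iteration lemma (Du--Liu \cite[Lemma 5.1]{DJ}) to absorb $[\nabla^2 f]_{\rm LL}$ and $|\nabla^2 f|_0$. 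What your approach buys is that no absorption/iteration is needed, since the Schauder estimate controls $|f|^*_{2,\alpha'}$ directly by $|f|_0$. What the paper's approach buys is that it never has to quantify the $\alpha'$-dependence of the Schauder constant.

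That last point is exactly where your argument has a genuine gap, and you correctly flag it: Lemma~\ref{lem:sch-est} (Gilbarg--Trudinger Theorem 6.2) only records $C=C(d,\alpha',\kappa,K)$ with no statement about the rate of blow-up as $\alpha'\to 1$. Your whole scheme rests on the bound $C(d,\alpha',\kappa,K)\lesssim K^{q}/(1-\alpha')$, which is not an off-the-shelf consequence of the cited theorem. Establishing it requires re-entering the Schauder machinery (Campanato--Morrey seminorms, or the Newtonian-potential estimate $[\nabla^2 u]_\alpha\le C(d)/(\alpha(1-\alpha))\,[f]_\alpha$ followed by a freezing-coefficient perturbation in which one must track how the absorption constants depend on $\alpha'$). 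Until that quantitative estimate is proved or properly cited, the step from the Schauder bound to the log-Lipschitz bound is not complete. A smaller but related issue is that the exponent $\tfrac{2}{\alpha}+1$ in $|z|$ falls out of the paper's specific interpolation-and-iteration bookkeeping (the choice $\varepsilon\rho\asymp(1+|z|)^{-1/\alpha}$); your route would produce whatever polynomial exponent emerges from the $K$-dependence of the Schauder constant, which you'd need to verify is compatible with the moment bounds used downstream -- though since only finiteness of the exponent matters there, this is secondary to the $(1-\alpha')^{-1}$ issue.
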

The proof will given after the proof of Proposition \ref{pro:regu2}.

\begin{proof}[Proof of Proposition \ref{pro:regu2}]

From the inequality \eqref{e:claim-3} below, we know that $\widehat{b}$ is locally Lipschitz. On the other hand, one can  verify that $\widehat{a}$ is a Lipschitz function because of  the higher regularity of $b_1$ ($b_1\in \mathcal{C}_b^{\alpha}(\mathbb{R}^d)$).
Moverover, $\widehat{a}$ is strictly elliptic. Meanwhile, we can regard functions $\widehat{a}$, $\widehat{b}$ and $h$ as $\mathcal{C}^{\alpha}$ functions on the varying radius balls $\mathcal{B}_x$. Utilizing the same arguments as that in proof of Proposition \ref{pro:regu1}, we can show that  for any $x\in \mathbb{R}^d$,
\begin{align*}
|\nabla f(x)|\leq |\nabla f|_{0;\mathcal{B}_x}& \leq (1+|x|^3), \\
|\nabla^2 f(x)|\leq |\nabla^2 f|_{0;\mathcal{B}_x}& \leq (1+|x|^4).
\end{align*}

Combining with the regularity $\frac{|\nabla^2 f(x)-\nabla^2 f(y)|}{ |x-y| | \log |x-y| | }$ presented in Lemma \ref{pro:local} and the estimate for $|f(x)|$ in Lemma \ref{lem:stein-f}, that is, $|f(x)| \leq C (1+|x|^2)$ for all $x\in \R^d$, one obtains
\begin{equation*}
\sup_{y:|y-x|\leq 1/8} \frac{|\nabla^2 f(x)-\nabla^2 f(y)|}{ |x-y| | \log |x-y| | } \leq C(1+|x|^{\frac{2}{\alpha}+1}).
\end{equation*}
The proof is  complete.
\end{proof}
In order to prove Lemma \ref{pro:local}, we first consider the following simpler equation: for any fixed point $z\in \R^d$,
\begin{eqnarray} \label{e:ModelE}
\bar{\mathcal{L}} f(x) &=& \frac{1}{2}\langle \widehat{a}(x), \nabla^2 f(x) \rangle_{\rm HS} \ \ = \ \ \bar{h}, \quad x\in  B_{\rho} (z).
\end{eqnarray}
The regularity result of Eq.  \eqref{e:ModelE} that we need is stated in the following lemma, whose proof is standard, see for instance Wang \cite[Section 2]{wang2006Schauder}.
\begin{lemma}\label{freeze-cof}
Suppose that $\bar{h}$ in Eq. \eqref{e:ModelE} is  locally Lipschitz. Let  $f\in \mathcal{C}^{2}(B_{\rho}(z))$ be a classical solution to Eq. \eqref{e:ModelE}. Then there exists positive constant $C$, independent of $z$, for all $x,y\in B_{\rho/4}(z)$,
\begin{eqnarray}\label{freeze-cof-ieq}
\frac{ | \nabla^2 f(x) - \nabla^2 f(y) | }{ |x-y| | \log |x-y| | } &\leq&  C \left( | \nabla^2 f|_{0;B_\rho(z)} + | f |_{0;B_\rho(z)} + |\bar{h}|_{0,1;B_{\rho}(z)}\right).  %\marginnote{\lihu D v.s. $\nabla$??? where are their definitions?}
\end{eqnarray}
\end{lemma}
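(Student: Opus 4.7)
\medskip

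\noindent\textbf{Proof plan for Lemma \ref{freeze-cof}.}
The plan is to reduce the variable-coefficient equation \eqref{e:ModelE} to the Poisson equation by freezing the coefficient $\widehat a$ at the center $z$, establish the log-Lipschitz regularity of $\nabla^2 f$ for the Poisson equation with Lipschitz right-hand side, and then recover the variable-coefficient case by a perturbation/iteration argument. Concretely, set $A_0=\widehat a(z)$ and rewrite \eqref{e:ModelE} as
\[
\tfrac12\langle A_0,\nabla^2 f(x)\rangle_{\rm HS}=g(x):=\bar h(x)-\tfrac12\langle \widehat a(x)-A_0,\nabla^2 f(x)\rangle_{\rm HS}.
\]
Because $A_0$ is symmetric and uniformly positive definite (Lemma \ref{lem:tlb}), one may pick $Q$ with $Q^\prime A_0 Q=2{\rm I}_d$ and set $\tilde f(y)=f(z+Qy)$; then $\tilde f$ solves a Poisson equation $\Delta\tilde f=\tilde g$ on a ball of comparable radius, with $\tilde g$ inheriting the Lipschitz norm of $g$ up to a constant depending only on the ellipticity bound. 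Since log-Lipschitz seminorms transform by a uniform factor under this linear change of variables, it suffices to prove the estimate for $\Delta\tilde f=\tilde g$.

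The core ingredient is the classical endpoint Schauder / Zygmund estimate for the Laplacian: if $\Delta u=g$ on $B_r$ with $g$ Lipschitz, then $\nabla^2 u$ is log-Lipschitz, with
\[
\sup_{0<|x-y|<1,\,x,y\in B_{r/2}}\frac{|\nabla^2 u(x)-\nabla^2 u(y)|}{|x-y|\,|\log|x-y||}
\;\leq\; C\bigl(|g|_{0,1;B_r}+|\nabla^2 u|_{0;B_r}+|u|_{0;B_r}\bigr).
\]
I would prove this by the standard representation $u=u_0-\Gamma*(g\chi)$ where $\chi$ is a smooth cutoff supported in $B_r$ and $u_0$ is the harmonic correction on $B_{r/2}$. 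The harmonic part $u_0$ has smooth second derivatives controlled by $|u|_0+|\nabla^2 u|_0$ via interior estimates. For the potential part, the Calder\'on--Zygmund kernel $\nabla^2\Gamma$ is only borderline singular, and splitting the difference $\nabla^2(\Gamma*(g\chi))(x)-\nabla^2(\Gamma*(g\chi))(y)$ at scale $\delta=|x-y|$ into a near-field annulus $\{|w-x|\le 2\delta\}$ and a far-field remainder produces the extra $|\log\delta|$ factor: the near-field piece contributes $C|g|_{0,1}\delta$ after using the Lipschitz cancellation of $g$, while the far-field piece contributes $C|g|_{0,1}\delta|\log\delta|$ from integrating $|\nabla^3\Gamma|\sim|w|^{-d-1}$ against the Lipschitz increment of $g$.

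With the Poisson endpoint estimate in hand, I would undo the freezing by a perturbation step on shrinking balls. Fix $x\in B_{\rho/4}(z)$ and choose a small radius $r\leq\rho/8$ to be determined. On $B_r(x)$ one has $|\widehat a-\widehat a(x)|\leq Lr$ by the Lipschitz continuity of $\widehat a$ (inherited from the higher regularity $b_1\in\mathcal{C}_b^\alpha$). Applying the frozen-coefficient estimate on $B_r(x)$ with right-hand side $g_x:=\bar h-\tfrac12\langle \widehat a-\widehat a(x),\nabla^2 f\rangle_{\rm HS}$ and noting $|g_x|_{0,1;B_r(x)}\leq|\bar h|_{0,1;B_\rho(z)}+CLr\,|\nabla^2 f|_{0,1;B_r(x)}$ gives a bound containing $[\nabla^2 f]_{{\rm LL};B_{r/2}(x)}$ on the right with a small prefactor $CLr$. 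Choosing $r$ small enough and iterating (or, equivalently, combining with an interpolation that estimates $[\nabla^2 f]_{0,1}$ by a small multiple of the LL-seminorm plus lower-order norms in Campanato-style fashion) absorbs this term into the left-hand side, yielding the desired bound \eqref{freeze-cof-ieq} uniformly in $z$.

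The main obstacle is the log-Lipschitz estimate for the Poisson equation, which is the endpoint case $\alpha=1$ where the usual Schauder bound degenerates; the argument must exploit the precise Calder\'on--Zygmund cancellation at the near-field scale $|x-y|$ to replace $\alpha^{-1}$ by $|\log|x-y||$. The perturbation/iteration step is then essentially a bookkeeping exercise, but one must be careful to absorb the $\nabla^2 f$-dependent Lipschitz seminorm of $g_x$ cleanly, which forces the choice of a sufficiently small perturbation radius $r$ depending only on the ellipticity constant and the Lipschitz constant of $\widehat a$, and hence independent of $z$.
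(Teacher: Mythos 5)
The paper does not actually prove this lemma: it declares the proof ``standard'' and cites Wang \cite{wang2006Schauder}. Your first two ingredients are correct and are the right ones to use. Freezing $\widehat a$ and reducing to the Laplacian by a linear change of variables is fine, and the endpoint Zygmund estimate for $\Delta u=g$ with Lipschitz $g$ (near-field $O(\delta)$ from the Lipschitz cancellation, far-field $O(\delta\lvert\log\delta\rvert)$ from integrating $\lvert\nabla^3\Gamma\rvert$ against the Lipschitz increment) is exactly the correct kernel estimate.

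The gap is in the absorption step. After freezing at $x_0$ on $B_r(x_0)$, the right-hand side contains $\tfrac12\langle\widehat a-\widehat a(x_0),\nabla^2 f\rangle$, whose modulus of continuity at scale $s$ is controlled by $L\lvert\nabla^2 f\rvert_\infty\,s+Lr\,\omega_{\nabla^2 f}(s)$, \emph{not} by $Lr$ times a Lipschitz modulus. Feeding a log-Lipschitz $\omega_{\nabla^2 f}(s)\sim s\lvert\log s\rvert$ into the far-field integral $\delta\int_{2\delta}^{r}s^{-2}\omega(s)\,ds$ produces $\delta\,(\log\delta)^2$: the Newtonian potential does not preserve the log-Lipschitz class, it degrades it by an extra logarithm. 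So the frozen estimate does not return a bound of the closable form $[\nabla^2 f]_{\mathrm{LL}}\le\tfrac12[\nabla^2 f]_{\mathrm{LL}}+C\Xi$; what comes out is either a genuine Lipschitz seminorm $[\nabla^2 f]_{0,1}$ (if you insist on treating $g_{x_0}$ as Lipschitz) or a $(\log)^2$ modulus on the right. The interpolation you invoke to fix this, bounding $[\nabla^2 f]_{0,1}$ by a small multiple of $[\nabla^2 f]_{\mathrm{LL}}$ plus lower-order norms, is false in principle: Lipschitz is strictly stronger than log-Lipschitz (the function $t\mapsto t\log\lvert t\rvert$ is log-Lipschitz but not Lipschitz), and no interpolation inequality goes from a weaker seminorm to a stronger one. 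Nor does restricting to a small ball help, since $[\nabla^2 f]_{0,1;B_r(x_0)}$ cannot be controlled by $[\nabla^2 f]_{\mathrm{LL};B_r(x_0)}$ at all, because the ratio $\lvert\log\lvert w-w'\rvert\rvert$ is unbounded as $\lvert w-w'\rvert\to 0$.

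To close the argument one needs a genuinely scale-by-scale structure rather than a single fixed freezing radius plus absorption. This is precisely what the cited Wang \cite{wang2006Schauder} (Caffarelli-Campanato polynomial approximation) does: at each dyadic scale $2^{-k}$ around $x_0$ compare $f$ with the solution of the frozen problem with matching boundary data, obtain a scale-$2^{-k}$ error with the perturbation entering only at that one scale, and sum the dyadic increments so that the single extra logarithm arises from the length of the geometric series rather than from iterating the potential. Alternatively, one can salvage the potential-theoretic route by choosing a freezing radius $r=r(\delta)$ depending on $\delta=\lvert x-y\rvert$ (e.g.\ $r\sim\delta^{1/2}$), which makes the offending prefactor $Lr$ kill the extra logarithm; but that is a different argument from the fixed-radius absorption you propose, and in the form you have written it the step does not go through.
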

\vskip 0.4cm
\begin{proof}[ Proof of Lemma \ref{pro:local}.  ]
Recall the Poisson equation \eqref{e:steinequ}, that is,
\begin{eqnarray*}
\langle  \widehat{b}(x), \nabla f(x)  \rangle + \frac{1}{2}  \langle \widehat{a}(x), \nabla^2 f(x) \rangle_{\rm HS}  &=& h(x) - \widehat{\mu}(h).
\end{eqnarray*}
We can rewrite this equality as
\begin{eqnarray*}
\bar{\mathcal{L}} f(x) &=& h(x) - \widehat{\mu}(h) - \langle  \widehat{b}(x), \nabla f(x)  \rangle \ \ =: \ \  \bar{h}(x),
\end{eqnarray*}
where $\bar{\mathcal{L}}$ is defined  in Eq. \eqref{e:ModelE}.

We claim that for any $z\in \R^d$, there exists some positive constant $C$, independent of $z$ such that
\begin{eqnarray}\label{e:claim-3}
|\widehat{b}(x)|_{0,1;B_{\rho}(z)}
&\leq&  C (1+|z|).
\end{eqnarray}
The proof of this claim is given later. Assume for claim \eqref{e:claim-3} holds.
It follows from Lemma \ref{freeze-cof} that there exists positive constants $C$, independent of $z$ such that, for any $x,y\in B_{\rho/4}(z)$,
\begin{align}\label{e:D2-1}
& \frac{ |\nabla^2 f(x) - \nabla^2 f(y) | }{ |x-y| | \log |x-y| | }
\leq   C \left( | \nabla^2 f|_{0;B_\rho(z)} + | f |_{0;B_\rho(z)} + |\bar{h} (x)|_{0,1;B_{\rho}(z)}\right) \nonumber \\
\leq & C \left( | \nabla^2 f|_{0;B_\rho(z)} + | f |_{0;B_\rho(z)} + |h(x)|_{0,1;B_{\rho}(z)}+|\widehat{b}(x)|_{0,1;B_{\rho}(z)}|\nabla f(x)|_{0,1;B_{\rho}(z)}+1\right)\nonumber\\
\leq &  C (1+|z|) \left[ | \nabla^2 f|_{0;B_\rho(z)} + |f|_{0;B_{\rho}(z)}+ | \nabla f |_{0,1;B_\rho(z)}+1\right],
\end{align}
where the second inequality follows from the  inequality \eqref{e:product-inter-est} and the last inequality holds by the claim \eqref{e:claim-3}.

Using a similar interpolation inequality in Du and Liu \cite[Lemma 5.2]{DJ} (or Krylov \cite[Theorem 3.2.1]{krylov1996lectures}), we have for $\forall \varepsilon>0$ and $\alpha\in (0,1)$,
\begin{align*}
[f]_{0,1;B_{\rho}(z)} &\leq C \rho^{1+\alpha} \varepsilon^{1+\alpha} [f]_{2,\alpha;B_{\rho}(z)}+C \rho^{-1} \varepsilon^{-1} |f|_{0;B_\rho(z)}, \\
% [f]_{1,1;B_{\rho}(z)} &\leq C \rho^\alpha \varepsilon^\alpha [f]_{2,\alpha;B_{\rho}(z)}+C \rho^{-2} \varepsilon^{-2} |f|_{0;B_\rho(z)}, \\
[f]_{2;B_{\rho}(z)} &\leq C \rho^\alpha \varepsilon^\alpha [f]_{2,\alpha;B_{\rho}(z)}+C \rho^{-2} \varepsilon^{-2} |f|_{0;B_\rho(z)},
\end{align*}
where $C$ depends on $\alpha, d$ and is independent of $z$. Observe that there exists some positive constant $C(\alpha,\rho)$ such that
\begin{eqnarray*}
[f]_{2,\alpha;B_\rho(z)} &\leq& C(\alpha,\rho) [\nabla^2 f]_{ {\rm LL};B_{\rho}(z)}.
\end{eqnarray*}
Thus, the right hand of inequality \eqref{e:D2-1} is smaller than
\begin{equation*}
C(\alpha,\rho) (1+|z|)\{ (1+\e \rho)(\e \rho)^{\alpha}[\nabla^2 f]_{ {\rm LL};B_{\rho}(z)} + (1+\e \rho) (\e \rho)^{-2}|f|_{0;B_{\rho}(z)}+|f|_{0;B_{\rho}(z)} +1 \},
\end{equation*}
while the constant $C(\alpha,\rho)$ is independent of $z$.

Taking $\varepsilon \rho=[4 C(\alpha,\rho)(1+|z|)]^{-1/\alpha} \wedge 1$, it follows from  \eqref{e:D2-1} that
\begin{equation*}
[\nabla^2 f]_{\textrm{LL};B_{\rho/4}(z)} \leq \frac{1}{2} [\nabla^2 f]_{\textrm{LL};B_\rho(z)}+C(1+|z|)^{1+\frac{2}{\alpha}} (1+|f|_{0;B_{\rho}(z)}).
\end{equation*}
Applying  Du and Liu \cite[Lemma 5.1]{DJ}, we get
\begin{equation*}
[\nabla^2 f]_{\textrm{LL};B_{\rho/4}(z)} \leq C(1+|z|^{\frac{2}{\alpha}+1}) (1+|f|_{0;B_{\rho}(z)} ),
\end{equation*}
which the desired inequality \eqref{e:uLip}. To complete the proof, it remains  to prove the claim \eqref{e:claim-3}. We first give some estimate for $[u]_{1,1;B_{\rho}(z)}$ for any $z\in \R^d$, {where $u$ is the solution of equation (\ref{e:pde-b1})} associated with the Zvonkin transform. By Lemma \ref{lem:regu-u} we have
\begin{align*}
[u]_{2,\alpha;B_{1/8}(z)} &\leq C(\lambda)( |b_1|_{0,\alpha;B_1(z)} +|u|_{0;B_1(z)}) , \\
\| u \|_{\infty} + \| \nabla u \|_{\infty} &\leq C(\lambda).
\end{align*}
Since $b_1\in \mathcal{C}_b^{\alpha}(\R^d)$, we have
\begin{eqnarray*}
|b_1|_{0,\alpha;B_1(z)} &=& |b_1|_{0;B_1(z)}+ [b_1]_{0,\alpha;B_1(z)}
\ \ \leq \ \ C.
\end{eqnarray*}
Thus we deduce that
\begin{align}\label{e:u2+}
[u]_{2,\alpha;B_{1/8}(z)} \leq C(\lambda), \ \
\| u \|_{\infty} + \| \nabla u \|_{\infty} \leq C(\lambda).
\end{align}

Then, by the interpolation inequality, we know that for any $\e>0$, there is some positive constant $C$, independent of $z$,  such that
\begin{align*}
[u]_{2;B_{1/8}(z)} &\leq C\e^{\alpha} [u]_{2,\alpha;B_{1/8}(z)} + C\e^{-2} |u|_{0;B_{1/8}(z)}.
\end{align*}
Taking $\e=1$ and combining the inequality \eqref{e:u2+}, we obtain
\begin{align*}
[u]_{2;B_{1/8}(z)} &\leq C(\lambda),
\end{align*}
where the constant $C$ is independent of $z$. Thus, for $\rho>1/8$
\begin{align}\label{e:u2}
[u]_{2;B_{\rho}(z)} \leq \sup_{y\in B_{\rho}(z)}
[u]_{2;B_{1/8}(y)} \leq C(\lambda).
\end{align}

Recall that
\begin{eqnarray*}
\widehat{b}(x)&=&(\lambda u+\nabla \Phi \cdot b_2) \circ \Phi^{-1}(x).
\end{eqnarray*}
It follows that
\begin{eqnarray*}
\nabla \widehat{b}(x)&=& \lambda \nabla u(\Phi^{-1}(x)) \nabla \Phi^{-1}(x)+[\nabla^2 \Phi \ b_2+\nabla \Phi \nabla b_2] \circ \Phi^{-1}(x)\cdot \nabla\Phi^{-1}(x).
\end{eqnarray*}
Since the term $b_2$ is Lipschitz, it follows from the estimates for $\| u \|_{\infty}$ and $\| \nabla u \|_{\infty}$ in Lemma \ref{lem:regu-u} and the inequality
\eqref{e:phi-infty} that there exists some positive constant $C(\lambda)$ such that
\begin{eqnarray*}
|\nabla \widehat{b}(x)|
&\leq& C(\lambda)+ C(\lambda)(1+|\Phi^{-1}(x)|) |\nabla^2 \Phi(\Phi^{-1}(x))| \nonumber \\
% &=& C(\lambda)+ C(\lambda)(1+|\Phi^{-1}(x)|) |\nabla^2 u(\Phi^{-1}(x))| \\
&\leq& C(\lambda)+ C(\lambda)(1+|\Phi^{-1}(x)|)     \\
% &=& C(\lambda) (1+|x-u(\Phi^{-1}(x))|)  \\
&\leq& C(\lambda) (1+|x|),
\end{eqnarray*}
where the second inequality follows from the inequality \eqref{e:u2} and the last inequality holds because $\Phi^{-1}(x)=x-u(\Phi^{-1}(x))$. Combining the estimate for $|\nabla \widehat{ b}|$ above with the estimate for  $|\widehat{b}|$ in Lemma \ref{lem:tlb}, we proves the claim \eqref{e:claim-3}. The proof is complete.
\end{proof}

\begin{appendix}
\section{Ergodicity} \label{app:ergodicity}

\subsection{Exponential ergodicity for process $(Y_t)_{t\geq 0}$.}

Recall that the process $(Y_t)_{t\geq 0}$ in \eqref{e:SDE-1}, that is,
\begin{eqnarray*}
\dif Y_t &=& \widehat{b}(Y_t) \dif t + \widehat{\sigma}(Y_t) \dif W_t
\end{eqnarray*}
with initial value $Y_0=\Phi(X_0)$. By the Zvonkin's transform, we know that the new coefficients of the transformed equation  maintain the dissipative and linear growth conditions, see Xie and Zhang \cite[Proposition 7.8]{xie2017ergodicity} for more details.
\begin{lemma}\label{lem:tlb}
Under Assumption \ref{assump-1} (both for {\bf Case 1} and {\bf Case 2}), for sufficiently large $\lambda$ in Eq. \eqref{e:pde-b1}, one has

(i) there are $\widehat{\theta}_1$, $\widehat{\theta}_2$, $\widehat{\theta}_3>0$  such that for all $x \in \mathbb{R}^d$,
\begin{eqnarray*}
\langle \widehat{b}(x),x\rangle \leq -\widehat{\theta}_1 |x|^{2}+\widehat{\theta}_2,
\quad
|\widehat{b}(x)| \leq \widehat{\theta}_3(1+|x|).
\end{eqnarray*}

(ii) Let $\Lambda_1=\frac{1}{2} \lambda_{\sigma}$ and $\Lambda_2=2 \lambda^{-1}_{\sigma}$, one has
\begin{eqnarray*}
\Lambda_2 |\xi|^2 \geq \xi^{\prime} \widehat{\sigma}(x) \widehat{\sigma}^{\prime}(x) \xi \geq \Lambda_1 |\xi|^2, \quad \forall x\in \R^d, \xi \in \R^d.
\end{eqnarray*}
\end{lemma}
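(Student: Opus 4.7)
The plan is to exploit the fact that for $\lambda$ large, the Zvonkin corrector $u$ and its gradient are uniformly small, so the transformed coefficients $\widehat b$ and $\widehat\sigma$ are small perturbations of $b_2$ and $\sigma$, which already enjoy the desired dissipativity/ellipticity. Throughout, write $x=\Phi^{-1}(y)$, so that $y=x+u(x)$ and $\nabla\Phi(x)=\mathrm{I}_d+\nabla u(x)$. From Lemma \ref{lem:regu-ul} (Case 1) or Lemma \ref{lem:regu-u} (Case 2) one has $\|u\|_\infty+\|\nabla u\|_\infty \le \kappa(\lambda)$ with $\kappa(\lambda)\to 0$ as $\lambda\to\infty$, and \eqref{e:phi-infty} is available for such $\lambda$.

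\textbf{Part (ii), ellipticity.} Writing $\eta=\nabla\Phi(x)'\xi$,
\[
\xi'\widehat\sigma(y)\widehat\sigma(y)'\xi=\eta'\sigma(x)\sigma(x)'\eta,
\]
so by \eqref{e:sigma-ell} we are reduced to controlling $|\eta|$ in terms of $|\xi|$. Since $\nabla\Phi(x)'=\mathrm{I}_d+\nabla u(x)'$, one has $|\eta|\ge(1-\|\nabla u\|_\infty)|\xi|$ and $|\eta|\le(1+\|\nabla u\|_\infty)|\xi|$. Choosing $\lambda$ so large that $\|\nabla u\|_\infty\le 1-1/\sqrt{2}$ (equivalently $\kappa(\lambda)$ small enough) gives $(1-\|\nabla u\|_\infty)^2\ge 1/2$ and $(1+\|\nabla u\|_\infty)^2\le 2$, which combined with \eqref{e:sigma-ell} yields the two-sided bound with $\Lambda_1=\lambda_\sigma/2$ and $\Lambda_2=2\lambda_\sigma^{-1}$.

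\textbf{Part (i), linear growth.} From $\widehat b(y)=\lambda u(x)+\nabla\Phi(x)\,b_2(x)$ and $\|u\|_\infty\le\kappa(\lambda)$, $\|\nabla\Phi\|_\infty\le 2$,
\[
|\widehat b(y)|\le\lambda\kappa(\lambda)+2|b_2(x)|\le C+C|x|,
\]
where the linear growth $|b_2(x)|\le|b_2(0)|+\theta_3|x|$ comes from \eqref{e:lingro-b2}. Finally $|x|=|y-u(x)|\le|y|+\kappa(\lambda)$, yielding the claimed bound with some $\widehat\theta_3$.

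\textbf{Part (i), dissipativity.} Substituting $y=x+u(x)$ one expands
\begin{align*}
\langle\widehat b(y),y\rangle
&=\lambda\langle u(x),x\rangle+\lambda|u(x)|^2+\langle b_2(x),x\rangle+\langle b_2(x),u(x)\rangle \\
&\quad +\langle \nabla u(x)\,b_2(x),x+u(x)\rangle.
\end{align*}
Using \eqref{e:dissi-b2} to control $\langle b_2(x),x\rangle\le-\theta_1|x|^2+\theta_2$, and estimating every other term by Cauchy--Schwarz together with $\|u\|_\infty+\|\nabla u\|_\infty\le\kappa(\lambda)$ and the linear growth of $b_2$, one obtains
\[
\langle\widehat b(y),y\rangle\le-\bigl(\theta_1-C\kappa(\lambda)-C\lambda\kappa(\lambda)/|x|\bigr)|x|^2+C(\lambda),
\]
where terms of order $|x|$ and lower are absorbed into the constant by Young's inequality. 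Choosing $\lambda$ large enough so that $C\kappa(\lambda)<\theta_1/2$ gives a coefficient $\widehat\theta_1'>0$ in front of $|x|^2$, and then passing from $|x|^2$ to $|y|^2$ via $|y|^2\le 2|x|^2+2\kappa(\lambda)^2$ yields \eqref{e:dissi-b2} for $\widehat b$ with some $\widehat\theta_1,\widehat\theta_2>0$.

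\textbf{Main obstacle.} The only non-routine point is choosing $\lambda$ large enough so that the absorbing structure of $b_2$ survives the Zvonkin perturbation. The quantitative smallness $\|u\|_\infty+\|\nabla u\|_\infty\le C'\lambda^{-\gamma/2}$ (Case 1) or $\le C'\lambda^{-1}$ (Case 2) supplied by Lemmas \ref{lem:regu-ul}--\ref{lem:regu-u} is exactly what makes this possible; everything else is expansion and Cauchy--Schwarz.
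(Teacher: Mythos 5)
Your proof is correct and follows essentially the same approach as the paper: rewrite $\widehat b$ via the Zvonkin transform, expand the inner product, apply the dissipativity of $b_2$, and control all remainder terms using the smallness of $\|u\|_\infty+\|\nabla u\|_\infty$ for $\lambda$ large (and similarly for the diffusion, use the bound on $\nabla\Phi$ coming from the same smallness). The only cosmetic difference is bookkeeping: you expand $\langle\widehat b(y),y\rangle$ into five terms directly in the pre-image variable $x=\Phi^{-1}(y)$, whereas the paper groups them into three and substitutes back from $\Phi^{-1}$ at the end; and your ellipticity argument spells out the choice $\|\nabla u\|_\infty\le 1-1/\sqrt2$ needed to land exactly on the constants $\Lambda_1=\lambda_\sigma/2$, $\Lambda_2=2\lambda_\sigma^{-1}$, which the paper leaves implicit.
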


We will prove this lemma after the proof of Lemma \ref{pro:ergodic-Y}.

\begin{proof}[Proof of Lemma  \ref{pro:ergodic-Y}]

Let the Lyapunov function $V$ be given by
\begin{eqnarray}\label{e:V}
V(x) &=& 1+ |x|^2, \quad \forall x\in \R^d,
\end{eqnarray}
one has
\begin{eqnarray*}
\nabla V(x) \ \ = \ \ 2x, \quad
\nabla^2 V (x) \ \ = \ \ 2 {\rm I}_d.
\end{eqnarray*}
From expression of the operator $\widehat{\mathcal{A}}$ in \eqref{e:hatA}, by Lemma \ref{lem:tlb} and inequality \eqref{e:sigma-ell}, we see that  there exists a positive constant $c_1$ such that
\begin{eqnarray}\label{e:AV}
\widehat{\mathcal{A}} V(x) &=& 2 \langle  \widehat{b}(x), x \rangle +  \langle \widehat{\sigma}(x) \widehat{\sigma}^{\prime}(x),  {\rm I}_d \rangle_{\rm HS}  \nonumber \\
&\leq& -2\widehat{\theta}_1(|x|^2+1) +2\widehat{\theta}_1 + 2\widehat{\theta}_2 + d \| \nabla \Phi \|_{\infty}^2 \lambda^{-2}_{\sigma}  \nonumber \\
&=:& -2\widehat{\theta}_1(|x|^2+1) +c_2  \nonumber \\
&\leq& -\widehat{\theta}_1V(x)+c_1 1_A(x),
\end{eqnarray}
where the set $A=\{x:  |x|^2 \leq \frac{c_2}{\widehat{\theta}_1} -1\}$ and the constant $c_1=\widehat{\theta}_1 + 2\widehat{\theta}_2 + d \| \nabla \Phi \|_{\infty}^2 \lambda^{-2}_{\sigma}$. It follows from Meyn and Tweedie \cite[Theorem 6.1]{Meyn1993stability} that the process $(Y_t)_{t\geq 0}$ is exponentially ergodic and that there exist two positive constants $c$ and $C$ satisfying
\begin{eqnarray*}
\sup_{ |f| \leq 1+V }  \{ \widehat{P}_t f(y) - \widehat{\mu}(f)  \} &\leq& C V(y) e^{-ct}.
\end{eqnarray*}

We note that there exists some positive constant $C$ such that  for any $h\in {\rm Lip}_0(1)$,  $|h(x)| \leq C V(x)$ for all $x\in \R^d$. This implies that
\begin{eqnarray*}
\sup_{h \in {\rm Lip_0(1)}} \left \{ \widehat{P}_t h(y) - \widehat{\mu}(h) \right \} &\leq& C V(y) e^{-ct}.
\end{eqnarray*}
Combining this with equality \eqref{e:dW}, we obtain
$$d_W(\mathcal L(Y^y_t), \widehat \mu) \le  C (1+|y|^2) e^{-ct},  \forall \ y \in \R^d,$$
where $\mathcal L(Y^y_t)$ is the law of $Y^y_t$. The proof is complete.
\end{proof}
%Using inequality \eqref{e:AV} and It\^{o}'s formula (see Gurvich \cite[proof of Lemma 7.2]{gurvich2014diffusion}), we know
%\begin{eqnarray}\label{e:mom-Z}
%\E V(Y_t^x) &\leq& e^{-2\widehat{\theta}_1 t}V(x) + \frac{c_2(1-e^{-2\widehat{\theta}_1 t})}{2\widehat{\theta}_1}, \quad \forall x\in \R^d, t\geq 0.
%\end{eqnarray}

%It follows from Meyn and Tweedie \cite[Theorem 4.3]{Meyn1993stability} that $\widehat{\mu}(V) \leq C$ for some positive constant $C$.

\begin{proof}[Proof of Lemma \ref{lem:tlb}. ]

We shall consider both {\bf Case 1} and {\bf Case 2} in Assumption \ref{assump-1}.

(i) Since $ \nabla \Phi(x) = {\rm I} + \nabla u(x)$, rewrite $\widehat{b}$ as
\begin{eqnarray*}
\widehat{b}(x) &=& (\lambda u+ ( {\rm I} + \nabla u ) \cdot b_2) \circ \Phi^{-1}(x).
\end{eqnarray*}

Recall that $\Phi(x) = x+ u(x)$, we know $x=\Phi^{-1}(x)+u\circ \Phi^{-1}(x)$ for any $x\in \R^d$. We first show the linear growth for $\widehat{b}$, that is,
\begin{eqnarray*}
|\widehat{b}(x)| &=& |\lambda u \circ \Phi^{-1}(x) + ( {\rm I} + \nabla u ) \cdot b_2) \circ \Phi^{-1}(x)| \\
% &\leq& \lambda \| u \|_{\infty} + (1+\| \nabla u \|_{\infty} ) |b_2 \circ \Phi^{-1}(y)| \\
&\leq& \lambda \| u \|_{\infty} + (1+\| \nabla u \|_{\infty} ) \theta_3 (1+ |\Phi^{-1}(x)| ) \\
% leq& \lambda \| u \|_{\infty} +\theta_3 (1+\| \nabla u \|_{\infty} )  (1+ |y-u\circ \Phi^{-1}(y)| ) \\
&\leq& \widehat{\theta}_3(1+|x|),
\end{eqnarray*}
where the first inequality holds from the linear growth for $b_2$ in \eqref{e:lingro-b2}, and the last inequality holds from the fact that $\| u \|_{\infty}$ and $\| \nabla u \|_{\infty}$ are bounded for both {\bf Case 1} and {\bf Case 2}.

Next, we show the dissipation condition for $\langle x, \widehat{b}(x) \rangle$ by giving estimates for the corresponding three terms
$\langle x, \lambda u \circ \Phi^{-1}(x)  \rangle$, $\langle x, b_2  \circ \Phi^{-1}(x)   \rangle$ and $\langle x, (\nabla u \cdot b_2) \circ \Phi^{-1}(x) \rangle$.

Since $\| u \|_{\infty}$ is bounded, we have
\begin{eqnarray*}
|\langle x, \lambda u \circ \Phi^{-1}(x)  \rangle| &\leq& \lambda \| u \|_{\infty} |x|.
\end{eqnarray*}

Since $\| \nabla u \|_{\infty}$ is bounded  and since $b_2$ is of  the linear growth, one has
\begin{eqnarray*}
|\langle x,  (\nabla u \cdot b_2) \circ \Phi^{-1}(x) \rangle|
&\leq& \| \nabla u \|_{\infty} |x| |\theta_3|(1+ |\Phi^{-1}(x)|) \\
&=& |\theta_3|  \| \nabla u \|_{\infty} |x| (1+ |x-u\circ \Phi^{-1}(x) |) \\
&\leq& |\theta_3|  \| \nabla u \|_{\infty} |x| (1+ |x|+ \| u \|_{\infty}) \\
% &\leq& |\theta_3|  \| \nabla u \|_{\infty} |x|^2 +  |\theta_3|  \| \nabla u \|_{\infty} (1+ \| u \|_{\infty})  |x| \\
&\leq& C(\lambda) |x|^2 +  |\theta_3|  \| \nabla u \|_{\infty} (1+\| u \|_{\infty})|x|,
\end{eqnarray*}
where the last inequality holds from Lemmas \ref{lem:regu-ul} and \ref{lem:regu-u} and the constant $C(\lambda)$ depends on constant  $\lambda$.

For the last term, by the dissipation assumption in \eqref{e:dissi-b2} and linear growth condition in \eqref{e:lingro-b2} for $b_2$, we have
\begin{eqnarray*}
\langle x, b_2 \circ \Phi^{-1}(x) \rangle
&=& \langle \Phi^{-1}(x)+u\circ \Phi^{-1}(x), b_2 \circ \Phi^{-1}(x) \rangle  \\
&\leq& -\theta_1  |\Phi^{-1}(x)|^2 + \theta_2 + \| u \|_{\infty} \theta_3 (1+ |\Phi^{-1}(x)| ) \\
&=& -\theta_1  |x-u\circ \Phi^{-1}(x)|^2 + \theta_2 + \| u \|_{\infty} \theta_3 (1+ |x-u\circ \Phi^{-1}(x)| ) \\
% &\leq& -\theta_1 (|x|^2-2 \langle x, u\circ \Phi^{-1}(x) \rangle )+ \theta_2 + \| u \|_{\infty} \theta_3 (1+|x|+\| u \|_{\infty} ) \\
&\leq& -\theta_1 |x|^2  + 2\theta_1 \| u \|_{\infty} |x| + \theta_2 + \| u \|_{\infty} \theta_3 (1+|x|+\| u \|_{\infty} ).
\end{eqnarray*}

Combining above three estimates, for large enough $\lambda>0$, we can see that  there exists some positive constants $\widehat{\theta}_1$ and $\widehat{\theta}_2$ such that
\begin{eqnarray*}
\langle \widehat{b}(x),x\rangle \leq -\widehat{\theta}_1 |x|^{2}+\widehat{\theta}_2, \ \ \forall x\in \R^d,
\end{eqnarray*}
proving the dissipation condition.

(ii) Recall that
\begin{eqnarray*}
\widehat{\sigma}(y) \ \ = \ \ (\nabla \Phi \cdot \sigma) \circ \Phi^{-1}(y),
\end{eqnarray*}
by the estimate for $\|\nabla \Phi\|_{\infty}$ in inequality \eqref{e:phi-infty} both for {\bf Case 1} and {\bf Case 2} in Assumption \ref{assump-1}, that is, $\frac{1}{2} \leq \|\nabla \Phi\|_{\infty} \leq 2$,
and the inequalities $\lambda_{\sigma} {\rm I}_d  \leq \sigma(x)\sigma^{\prime}(x) \leq \lambda_{\sigma}^{-1} {\rm I}_d$, it follows that
\begin{eqnarray*}
\Lambda_2 |\xi|^2 \geq \xi^{\prime} \widehat{\sigma}(y) \widehat{\sigma}^{\prime}(y) \xi \geq \Lambda_1 |\xi|^2, \quad \forall y\in \R^d, \xi \in \R^d
\end{eqnarray*}
with $\Lambda_1 =\frac{1}{2} \lambda_{\sigma}$ and $\Lambda_2 =2 \lambda^{-1}_{\sigma}$. The proof is complete.
\end{proof}

\subsection{Exponential ergodicity for process $(X_t)_{t\geq 0}$.}

%Let $\widetilde{\Phi}:\mathbb{R}^d \mapsto \mathbb{R}^d$ be a homeomorphism.  Define a new semigroup of bounded linear operators on $\mathcal{B}_{b}(\mathbb{R}^d)$ by
%\begin{eqnarray*}
%P_t^{\widetilde{\Phi}} \varphi(y):=[P_t(\varphi \circ \widetilde{\Phi})] (\widetilde{\Phi}^{-1}(y)),
%\end{eqnarray*}
%we have the following observations (see )
%\begin{remark}\label{lem:relation}
%(i) $\tl{\mu}$ is an invariant probability measure of $P_t$ if and only if $\tl{\mu} \circ \Phi^{-1}$ is an invariant probability measure of $P_t^{\widetilde{\Phi}}$.
%
%(ii) $P_t$ is $V$-uniformly exponential ergodic if and only if $P_t^{\widetilde{\Phi}}$ is $V \circ \Phi^{-1}$-uniformly exponential ergodic.
%\end{remark}

\begin{proof}[Proof of Lemma  \ref{pro:ergodic-X}]
Let $\widehat{P}_t$ and $P_t$ denote the semigroups associated respectively with  $(Y_t)_{t\geq 0}$ and $(X_t)_{t\geq 0}$. Since $\Phi:\mathbb{R}^d \mapsto \mathbb{R}^d$ is a homeomorphism, it follows from Xie and Zhang \cite[Proposition 2.8]{xie2017ergodicity} that
\begin{eqnarray*}
\widehat{P}_t \varphi(y):=[P_t(\varphi \circ \Phi)] (\Phi^{-1}(y)),
\end{eqnarray*}
and  that the process $(X_t)_{t\geq 0}$ is also exponentially ergodic with ergodic measure $\mu$ given by
\begin{eqnarray*}
\mu & = & \widehat{\mu} \circ \Phi,
\end{eqnarray*}
where $\hat{\mu}$ is the unique invariant measure of $(Y_t)_{t\geq 0}$.
The proof is complete.
\end{proof}

\subsection{Exponential ergodicity of the  Markov chain $(Z_k)_{k\in \mathbb{N}_0}$.}
Recall that
\begin{eqnarray*}
Z_{k+1} &=& Z_{k}+ \eta \widehat{b}(Z_k)+  \sqrt{\eta} \widehat{\sigma}(Z_k)\xi_{k+1}, \quad \forall k\in \mathbb{N}_0,
\end{eqnarray*}
where $\eta>0$ is the step size and $\{ \xi_k \}_{k\in \mathbb{N}}$ are independent standard Gaussian random variables.

\begin{proof}[Proof of Lemma  \ref{pro:ergodic-MC}.]
(i) Let $\xi_1 \sim \mathcal{N}(0,{\rm I}_d)$, denote
\begin{eqnarray*}
\delta &=& Z_1^x - x \ \ = \ \ \eta \widehat{b}(x) + \eta^{\frac{1}{2}} \widehat{\sigma}(x) \xi_1.
\end{eqnarray*}

Let $V(x)=1+|x|^2$, one has
\begin{eqnarray*}
\mathbb{E} V (Z_1^x)- V(x)
&=& \mathbb{E}[ \langle \delta, \nabla V(x) \rangle  ]
+\mathbb{E} \int_0^1 \int_0^1 r \langle \delta \delta^{\prime}, \nabla^2 V (x+\tl{r} r \delta ) \rangle_{\textrm{HS}} \dif \tl{r} \dif r  \\
&=&  \eta \langle \widehat{b}(x),\nabla V(x) \rangle +\mathbb{E} \int_0^1 \int_0^1 r \langle \delta \delta^{\prime}, \nabla^2 V(x) \rangle_{\textrm{HS}} \dif \tl{r} \dif r \\
&=& \eta \widehat{\mathcal{A}}V(x) +\frac{\eta^2}{2} \langle \widehat{b}(x) \widehat{b}^{\prime} (x), \nabla^2 V(x) \rangle_{\textrm{HS}},
\end{eqnarray*}
where the second equality holds from $\nabla^2 V(x) = 2{\rm I}_d$ for all $x\in \R^d$. Taking into account the inequality \eqref{e:AV}, that is,
$
\widehat{\mathcal{A}} V(x) \leq -2\widehat{\theta}_1V(x) +c_2
$
for all $x\in \R^d$, there exists some positive constant $c_3>\widehat{\theta}_1$ independent of $\eta$ such that for sufficiently small  $\eta>0$,
\begin{eqnarray}\label{e:EVZ}
\mathbb{E} V (Z_1^x)
&\leq& V(x) + \eta(-2\widehat{\theta}_1 V(x) + c_2) + \frac{\eta^2}{2} \langle \widehat{b}(x) \widehat{b}^{\prime} (x), \nabla^2 V(x) \rangle_{\textrm{HS}}  \nonumber \\
&\leq& (1-\widehat{\theta}_1 \eta ) V(x) +c_3 \eta  \nonumber \\
&\leq& (1- \frac{\widehat{\theta}_1}{2} \eta ) V(x) +c_3 \eta 1_{B}(x),
\end{eqnarray}
where the second inequality holds because of  the linear growth of $\widehat{b}$ in Lemma \ref{lem:tlb}, and the set $B$ is defined by $B=\{x: |x|^2 \leq \frac{2c_3}{\widehat{\theta}_1} -1 \}$.

(ii) With similar calculations as in the proof of  Lemma \ref{pro:ergodic-Y}, we know that for any integer $\ell>1$, there exists positive constant $C_{\ell}$ {independent of $\eta$} such that,
\begin{eqnarray}\label{EVZL}
\widehat{\mathcal{A}} V^{\ell}(x) &\leq& - \ell \widehat{\theta}_1 V^{\ell}(x) + C_{\ell},  \quad \forall x\in \R^d.
\end{eqnarray}
{By the similar calculations in inequality \eqref{e:EVZ}, one gets from (\ref{EVZL}) that there exists some positive constant $\tl{C}_{\ell}$ independent of $\eta$ satisfying}
\begin{eqnarray}\label{e:EVellZ}
\mathbb{E} V^{\ell}(Z_1^x)
&\leq& (1- \frac{1}{2} \eta \ell \widehat{\theta}_1 ) V^{\ell}(x) +\tl{C}_{\ell} \eta.
\end{eqnarray}

For any integers $n\geq 1$, denote
\begin{eqnarray*}
V_n(x) &=& e^{\frac{1}{4}\widehat{\theta}_1 n \eta} V(x), \ \ \forall x\in \R^d, \ \
r(n) \ \ = \ \ \frac{\widehat{\theta}_1}{4}\eta e^{\frac{1}{4}\widehat{\theta}_1 n \eta},\ \
\bar {c} \ \ = \ \ \frac{4c_3}{\widehat{\theta}_1} e^{\frac{1}{4}\widehat{\theta}_1 \eta}.
\end{eqnarray*}
and the set
\begin{eqnarray*}
\mathscr{C} &=& \{x: V(x) \leq \frac{4c_3}{\widehat{\theta}_1} e^{\frac{1}{4}\widehat{\theta}_1 \eta} \}.
\end{eqnarray*}
Since $c_3>\widehat{\theta}_1$, we know the set $\mathscr{C}$ is non-empty and compact. By the inequality \eqref{e:EVZ}, one yields that
\begin{eqnarray*}
\widehat{Q}_1 V_{n+1}(x) + r(n) V(x)
&=& e^{\frac{1}{4}\widehat{\theta}_1 (n+1)\eta} \E V(Z_1^x) + r(n) V(x) \\
&\leq& e^{\frac{1}{4}\widehat{\theta}_1 (n+1)\eta}[ (1-\eta \widehat{\theta}_1) V(x) + c_3 \eta ] + r(n) V(x) \\
&=& V_n(x) + [(1-\eta \widehat{\theta}_1)e^{\frac{1}{4}\widehat{\theta}_1 \eta} + \frac{1}{4}\widehat{\theta}_1 \eta-1 ]e^{\frac{1}{4} \widehat{\theta}_1 n \eta} V(x) + c_3 \eta e^{\frac{1}{4}\widehat{\theta}_1 (n+1)\eta} \\
&\leq& V_n(x) + \bar{c} r(n) 1_{\mathscr{C}}(x),
\end{eqnarray*}
where the last inequality holds from the fact that for sufficiently small $\eta>0$,
\begin{equation*}
(1-\eta \widehat{\theta}_1)e^{\frac{1}{4}\widehat{\theta}_1 \eta} + \frac{1}{4}\widehat{\theta}_1 \eta-1 \leq -\frac{1}{4} \widehat{\theta}_1 \eta.
\end{equation*}

We claim that the compact set $\mathscr{C}$ is petite, then it follows from Tuominen and Tweedie \cite[Theorem 2.1]{Tuominen1994Subgeometric} or Douc et al. \cite[Theorem 1.1]{Douc2004practical} that
\begin{eqnarray*}
\sup_{ |f| \leq 1+|\cdot|^2 }  \{  \widehat{Q}_k f(x) - \widehat{\mu}_{\eta}(f)  \} &\leq& C \eta^{-1} e^{-ck\eta},
\end{eqnarray*}
where $\widehat{\mu}_{\eta}$ denotes the unique invariant measure of the Markov chain $(Z_k)_{k\in \mathbb{N}_0}$ and constants $C$ and $c$ are independent of $\eta$ and $k$. We denote that there exists some positive constant $C$ such that  for any $h\in {\rm Lip}_0(1)$,  $|h(x)| \leq C V(x)$ for all $x\in \R^d$.
Combining this with equality \eqref{e:dW}, we obtain
$$d_W(\mathcal L(Z^x_k), \widehat \mu_{\eta}) \le  C\eta^{-1} e^{-ck\eta},  \forall \ x \in \R^d,$$
where $\mathcal L(Z^x_k)$ is the law of $Z^x_k$.

The reminder is to show that the claim: the compact set $\mathscr{C}$ is petite. It is suffice to show that
\begin{eqnarray}\label{e:pet}
p(\eta,x,z) \geq c \nu(z), \ \ \forall x\in \mathscr{C},
\end{eqnarray}
where $p(\eta,x,z)$ is the density of $Z_1^x$, $c$ is some positive constant and $\nu$ is a probability measure. Recall that
\begin{eqnarray*}
Z_1^x &=& x+\eta \widehat{b}(x) + \eta^{\frac{1}{2}} \widehat{\sigma}(x) \xi_1,
\end{eqnarray*}
one gets the expression of $p(\eta,x,z)$ as below:
\begin{eqnarray*}
p(\eta,x,z)&=&[(2\pi)^d\eta^d \det(\widehat{\sigma}(x)\widehat{\sigma}^{\prime}(x) )]^{-\frac{1}{2}} \exp\left( -(z-x-\eta \widehat{b}(x)) \frac{(\widehat{\sigma}(x)\widehat{\sigma}^{\prime}(x))^{-1}}{2\eta}(z-x-\eta \widehat{b}(x))    \right).
\end{eqnarray*}

{It follows from Lemma \ref{lem:tlb} (ii) and  the fact that,}
$$
|z-x-\eta \widehat{b}(x)|^2 \ \ \leq \ \ 2|z|^2 + 4|x|^2 + 8\eta^2 \widehat{\theta}_3^2(1+|x|^2),
$$
we know $p(\eta,x,z)$ is larger than
\begin{eqnarray*}
(\pi\eta \Lambda_1)^{-\frac{d}{2}} \exp(-\frac{|z|^2}{\eta \Lambda_1})\times( \frac{2\Lambda_2}{\Lambda_1})^{-\frac{d}{2}} \exp(-\frac{1}{2\Lambda_1 \eta}[4|x|^2+8\eta^2\widehat{\theta}_3^2(1+|x|^2)]),
\end{eqnarray*}
thus the inequality \eqref{e:pet} holds by taking
\begin{eqnarray*}
\nu(z)&=&(\pi\eta \Lambda_1)^{-\frac{d}{2}} \exp(-\frac{|z|^2}{\eta \Lambda_1}),
\end{eqnarray*}
and
\begin{eqnarray*}
c&=&\inf_{x\in \mathscr{C}}\left\{( \frac{2\Lambda_2}{\Lambda_1})^{-\frac{d}{2}} \exp(-\frac{1}{2\Lambda_1 \eta}[4|x|^2+8\eta^2\widehat{\theta}_3^2(1+|x|^2)])\right\}>0
\end{eqnarray*}
for some non-empty and compact set $\mathscr{C}$.

(iii) Recall that the inequality \eqref{e:EVellZ}
\begin{eqnarray*}
\mathbb{E} V^{\ell}(Z_1^x)
&\leq& (1- \frac{1}{2} \eta \ell \widehat{\theta}_1 ) V^{\ell}(x) +\tl{C}_{\ell} \eta,
\end{eqnarray*}
one has
\begin{eqnarray*}
\widehat{\mu}_{\eta}(V^{\ell})
&\leq& (1- \frac{1}{2} \eta \ell \widehat{\theta}_1 ) \widehat{\mu}_{\eta}(V^{\ell}) +\tl{C}_{\ell} \eta,
\end{eqnarray*}
it implies that there exists some positive constant $C$ independent of $\eta$ such that
\begin{eqnarray*}
\widehat{\mu}_{\eta}(V^{\ell})
&\leq& \frac{2\tl{C}_{\ell}}{ \ell \widehat{\theta}_1}
\ \ \leq \ \ C.
\end{eqnarray*}
Using the relationship between $V$ and $|\cdot|^2$, we can get the desired result. The proof is complete.
\end{proof}

\end{appendix}

\end{document}